\newtheorem{thm}{Theorem}[section]
\newtheorem{lem}[thm]{Lemma}
\newtheorem{prop}[thm]{Proposition}
\newtheorem{defn}[thm]{Definition}
\newtheorem{rem}[thm]{Remark}
\newcommand{\SLvc}{\mathcal{S}_{\vc}}
\newcommand{\SR}{\mathscr{S}(\mathbb{R})}
\newcommand{\lesi}{\lesssim}
\newcommand{\x}{\mathbb{\times}}
\newcommand{\B}{\dot{B}}
\newcommand{\F}{\dot{F}}
\newcommand{\sL}{\sqrt{L}}
\newcommand{\dx}{d\mu(x)}
\newcommand{\dy}{d\mu(y)}
\newcommand{\dz}{d\mu(z)}
\newcommand{\supp}{\operatorname{supp}}
\newcommand{\f}{\frac}
\newcommand{\Hk}{\mathcal{H}}
\newcommand{\tx}{\tau^x}
\newcommand{\ty}{\tau^y}
\newcommand{\al}{\alpha}
\newcommand{\p}{\partial}
\newcommand{\si}{\sigma}
\newcommand{\su}{\subset}
\newcommand{\vc}{\infty}
\title[Spectral multipliers on new Besov and Triebel--Lizorkin spaces]{Spectral multipliers of self-adjoint operators on  Besov and Triebel--Lizorkin spaces
associated to operators}         % Enter your title between curly braces
\author{The Anh Bui}
\address{Department of Mathematics, Macquarie University, NSW 2109,
Australia}
\email{the.bui@mq.edu.au, bt\_anh80@yahoo.com}
\author{Xuan Thinh Duong}
\address{Department of Mathematics, Macquarie University, NSW 2109,
Australia}
\email{xuan.duong@mq.edu.au}
\subjclass[2010]{42B30, 42B35, 47B38}
\thanks{{\it Key words and phrases}: spectral multiplier, Besov spaces, Triebel--Lizorkin spaces}
\begin{document}

\begin{abstract}
	Let $X$ be a space of homogeneous type and let $L$ be a nonnegative self-adjoint operator on $L^2(X)$ 
	which satisfies a Gaussian estimate on its heat kernel. In this paper we prove a H\"omander type spectral multiplier theorem 
	for $L$ on the Besov and Triebel--Lizorkin spaces associated to $L$. Our work not only recovers the boundedness of the 
	spectral multipliers on $L^p$ spaces and Hardy spaces associated to $L$, but also is the first one which proves the boundedness 
	of a general spectral theorem on Besov and Triebel--Lizorkin spaces.   
\end{abstract}
\date{}

\maketitle

\tableofcontents

\section{Introduction}
Let $X$ be a space of homogeneous type, with quasi distance $d$ and
$\mu$ is a nonnegative Borel measure on $X$, which satisfies the doubling property (\ref{doubling1}) below. \emph{In this paper, we assume that $\mu(X)=\infty.$}\\

For $x\in X$ and $r>0$ we set $B(x, r)=\{y\in X:d(x,y)<r\}$ to be the open ball of radius $r >0$ and centered at $x\in
X$, and $V(x,r)=\mu(B(x, r))$. The doubling
property of $\mu$ provides that there exists a constant $C>0$ so
that
\begin{equation}\label{doubling1}
V(x,2r)\leq CV(x,r)
\end{equation}
for all $x\in X$ and $r>0$.\\
The doubling property (\ref{doubling1}) yields that there exists $n>0$ so that
\begin{equation}\label{doubling2}
V(x,\lambda r)\leq C\lambda^nV(x,r),
\end{equation}
for some positive constant $n$ uniformly for all $\lambda\geq 1,
x\in X$ and $r>0$; and that
\begin{equation}\label{doubling3}
V(x,r)\leq C\Big(1+\frac{d(x,y)}{r}\Big)^{\tilde n}V(y,r),
\end{equation}
uniformly for all $x,y\in X$, $r>0$ and for some $\tilde n \in [0,n]$.\\

Let $L$ be a non-negative self-adjoint  operator on $L^2(X)$ which generates the semigroup $\{e^{-tL}\}_{t>0}$. Denote by $p_t(x,y)$ the  kernel of the semigroup $e^{-tL}$. In this paper, we assume that the kernel $p_t(x,y)$ satisfies a Gaussian upper bound, i.e., there exist positive constants $C$ and  $c$ so that for all $x,y\in X$ and $t>0$,
\begin{equation}
\tag{GE}\label{GE}
\displaystyle |p_t(x,y)|\leq \f{C}{\mu(B(x,\sqrt{t}))}\exp\Big(-\f{d(x,y)^2}{ct}\Big).
\end{equation}

Denote by $E_L(\lambda)$ a spectral resolution of $L$. Then by spectral theory, for any bounded Borel function $F:[0,\vc)\to \mathbb{C}$ we can define
$$
F(L)=\int_0^\vc F(\lambda)dE_L(\lambda)
$$
as a bounded operator on $L^2(X)$. It is natural to raise a question on the boundedness of the spectral multipliers $F(L)$ on various function spaces under some suitable smoothness conditions on $F$. We note that the problem on the boundedness of the spectral multipliers has had a long history and has been received a great deal of attention by many mathematicians. The early result for the $L^p$ boundedness for the spectral multiplier in the standard case when $L=-\Delta$ is Laplace operator on the Euclidean space $\mathbb{R}^n$ was obtained by L. H\"ormander \cite{Ho}. Then this result has been extended to various settings such as Lie groups of polynomial growth, nilpotent groups and spaces of
homogeneous type. See for example \cite{Alex, MM, C2, MS, Heb, DOS, DSY} and the references therein. The $L^p$-boundedness of the spectral multipliers for a general operator $L$ satisfying the Gaussian upper bounds was obtained in \cite{DOS}. Then the authors in \cite{DSY} extended the result in \cite{DOS} to the weighted $L^p$-estimates. The work \cite{DSY} can be viewed as an extension of the classical result for the spectral multipliers of the standard Laplacian in \cite{KW}. General spectral multiplier theorems on Hardy spaces 
associated to operators were obtained in \cite{DY, B}. 

The main aim of this paper is to prove the boundedness of the spectral multipliers $F(L)$ on new Besov and Triebel--Lizorkin spaces associated to the operator $L$. More precisely, we are able to prove the following result:

\begin{thm}\label{mainthm-spectralmultipliers-Miklin} 	Let $s>\f{n}{2}$. Then for any bounded Borel function $F$ such that $\sup_{t>0}\|\eta\, \delta_tF\|_{W^{\vc}_s}<\vc$ where $\delta_tF(\cdot)= F(t\cdot)$ and $\eta$ is a $C^\vc_c(\mathbb R_+)$ function, not identically zero, we have:
	\begin{enumerate}[\rm (a)]
		\item the spectral multiplier $F(\sqrt L)$ is bounded on $\F^{\alpha, L}_{p,q}(X)$ provided that $\alpha\in \mathbb{R}$, $0<p,q <\vc$ and $s>n(\f{1}{1\wedge p\wedge  q}-\f{1}{2})$, i.e.,
		\[
		\|F(\sqrt L)\|_{\F^{\alpha, L}_{p,q}(X)\to \F^{\alpha, L}_{p,q}(X)}\lesi F(0)+\sup_{t>0}\|\eta\, \delta_tF\|_{W^{\vc}_s}.
		\]
		\item the spectral multiplier $F(\sqrt L)$ is bounded on $\B^{\alpha, L}_{p,q}(X)$ provided that $\alpha\in \mathbb{R}$, $0<p<\vc$, $0<q\le \vc$ and $s>n(\f{1}{1\wedge p\wedge  q}-\f{1}{2})$, i.e.,
		$$
		\|F(\sqrt L)\|_{\B^{\alpha, L}_{p,q}(X)\to \B^{\alpha, L}_{p,q}(X)}\lesi F(0) +\sup_{t>0}\|\eta\, \delta_tF\|_{W^{\vc}_s}.
		$$		
	\end{enumerate}
\end{thm}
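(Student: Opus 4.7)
The plan is to exploit the Littlewood--Paley characterization of $\F^{\alpha,L}_{p,q}(X)$ and $\B^{\alpha,L}_{p,q}(X)$ via a spectral resolution of unity $\{\psi_j(\sL)\}_{j\in\ZZ}$, where $\psi_j(\lambda)=\psi(2^{-j}\lambda)$ for some $\psi\in C^\vc_c(\mathbb{R}_+)$. Since $F(\sL)$ commutes with every $\psi_j(\sL)$,
\[
\psi_j(\sL)F(\sL)f \;=\; F_j(\sL)\psi_j(\sL)f,
\]
where $F_j(\lambda)=F(\lambda)\tilde\psi(2^{-j}\lambda)$ and $\tilde\psi\in C^\vc_c(\mathbb{R}_+)$ is chosen to equal $1$ on $\supp\psi$. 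The assumption on $F$ yields the uniform bound
\[
\sup_{j\in\ZZ}\|\delta_{2^{-j}}F_j\|_{W^\vc_s} \;\lesi\; F(0)+\sup_{t>0}\|\eta\,\delta_tF\|_{W^\vc_s},
\]
so matters reduce to a uniform-in-$j$ pointwise estimate for $F_j(\sL)\psi_j(\sL)f$.

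The main analytic input is a Duong--Ouhabaz--Sikora type kernel estimate: for $G$ supported in $[1/4,4]$ with $\|G\|_{W^\vc_s}<\vc$ and $s>n/2$, the kernel of $G(t\sL)$ enjoys off-diagonal decay of the form
\[
|K_{G(t\sL)}(x,y)| \;\lesi\; \|G\|_{W^\vc_s}\,V(x,t)^{-1}\bigl(1+d(x,y)/t\bigr)^{-s},
\]
obtained from the Gaussian bound \eqref{GE} through Davies' perturbation trick, the finite-propagation-speed method for the wave equation, and Sobolev embedding. Applied with $G=\delta_{2^{-j}}F_j$ and $t=2^{-j}$, this gives decay for the kernel of $F_j(\sL)$. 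Since $\psi_j(\sL)f$ is spectrally concentrated at scale $2^j$, one then invokes a Peetre-type pointwise majorization
\[
|\psi_j(\sL)f(y)| \;\lesi\; \bigl(1+2^jd(x,y)\bigr)^{n/r}\MMg{r}(\psi_j(\sL)f)(x),\qquad r>0,
\]
where $\MMg{r}(g)=(\MM(|g|^r))^{1/r}$, to obtain the key uniform pointwise bound
\[
|F_j(\sL)\psi_j(\sL)f(x)| \;\lesi\; \MMg{r}(\psi_j(\sL)f)(x).
\]

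Substituting this into the Littlewood--Paley characterization, part (a) follows from the Fefferman--Stein vector-valued maximal inequality applied to $\MMg{r}$ in $L^{p/r}(\ell^{q/r})$ (requiring $r<1\wedge p\wedge q$), and part (b) follows from the scalar $L^{p/r}$-boundedness of $\MMg{r}$ (requiring $r<1\wedge p$). The hypothesis $s>n\bigl(\tfrac{1}{1\wedge p\wedge q}-\tfrac12\bigr)$ is precisely what permits the selection of such an $r$ together with enough kernel decay for the integral against the Peetre weight $(1+d/t)^{n/r}$ to converge. The main technical challenges are establishing the DOS kernel estimate in the full generality of a space of homogeneous type and verifying the Peetre-type maximal inequality in this abstract spectral (rather than Fourier-analytic) framework; both rely crucially on the Gaussian heat kernel bound \eqref{GE}, which is the mechanism that converts spectral regularity of the multiplier into spatial decay of the operator kernel.
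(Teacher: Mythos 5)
The paper proves Theorem \ref{mainthm-spectralmultipliers-Miklin} by a two-line reduction: the Plancherel-type condition \eqref{eq1-mainthmsm} holds automatically with $\tilde q=\infty$ under the Gaussian bound \eqref{GE}, so the statement is a corollary of Theorem \ref{mainthm-spectralmultipliers}, whose own proof occupies all of Section~4 and is built from Theorem \ref{thm-spectralmultipliers}, the new atomic decomposition (Theorem~\ref{thm-new atomic decomposition}), duality (Proposition~\ref{prop-duality}), complex interpolation (Proposition~\ref{prop-comple interpolation}), and real interpolation (Theorem~\ref{mainthm-Interpolation}). Your proposal does not follow this route, and it has a genuine gap that prevents it from reaching the sharp smoothness threshold.

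Your argument is essentially a one-shot Littlewood--Paley estimate: dominate $|K_{F_j(\sL)}(x,y)|$ by $(1+2^jd(x,y))^{-s}(V(x,2^{-j})V(y,2^{-j}))^{-1/2}$ via a DOS-type estimate, then invoke a Peetre-type pointwise majorization $|\psi_j(\sL)f(y)|\lesi (1+2^jd(x,y))^{n/r}\mathcal{M}_r(\psi_j(\sL)f)(x)$, integrate, and apply Fefferman--Stein. Granting the Peetre inequality, the integral
\[
\int_X \frac{(1+2^jd(x,y))^{-s+n/r+\tilde n/2}}{V(x,2^{-j})}\,d\mu(y)
\]
converges (by Lemma~\ref{lem-elementary}(a)) only when $s>n+n/r+\tilde n/2$, which after taking $r<1\wedge p\wedge q$ forces $s>n+\frac{n}{1\wedge p\wedge q}+\frac{\tilde n}{2}$. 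This is \emph{not} the sharp threshold $s>n\bigl(\frac{1}{1\wedge p\wedge q}-\frac12\bigr)$; the two differ by roughly $\frac{3n}{2}+\frac{\tilde n}{2}$. Your claim that the hypothesis ``is precisely what permits'' the integral to converge is incorrect; the calculation does not close.

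What is missing is the two-stage structure the paper actually uses. Step~1 establishes the sharp result only in the special case $q=2$ (Theorem~\ref{thm-spectralmultipliers}(i)): for $0<p\le1$ this relies on the \emph{new} atomic decomposition of $\F^{\alpha,L}_{p,q}$ into Hardy-space-like atoms (Theorem~\ref{thm-new atomic decomposition}), whose support, size, and cancellation structure allow kernel estimates with genuine extra decay in $j$ from the factor $\bigl(\frac{t}{2^{-\nu}}\wedge\frac{2^{-\nu}}{t}\bigr)^{2M}$; for $1<p<\infty$ it uses duality. Step~2 establishes a deliberately suboptimal result for general $q$ (Theorem~\ref{thm-spectralmultipliers}(ii)), via the $(L,M,p)$-atom decomposition and the discrete dyadic summation Lemma~\ref{lem1- thm2 atom Besov} — which plays the role of a Peetre-type inequality in this abstract setting — yielding $s>\frac{n}{1\wedge p\wedge q}+\frac{\tilde n}{2}$. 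Step~3 then combines the two via an iterative complex-interpolation argument (the $\{D_k\}$ construction in the proof of Theorem~\ref{mainthm-spectralmultipliers}), together with duality and real interpolation, to push the threshold down to the sharp $n\bigl(\frac{1}{1\wedge p\wedge q}-\frac12\bigr)$. None of steps~1 or~3 appears in your proposal, and these are precisely what produce sharpness; your direct argument, even completed, would only yield a version of step~2 with a larger threshold. Finally, the Peetre-type majorization for $\psi_j(\sL)f$ you invoke is not established in this non-Fourier setting and is not how the paper proceeds; Lemma~\ref{lem1- thm2 atom Besov} is the substitute and is stated for dyadic-cube sums rather than as a pointwise majorization.
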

Note that by using a different approach the authors in \cite{G.etal2} proved similar estimates to those in Theorem \ref{mainthm-spectralmultipliers-Miklin}  under the stronger assumptions  that $s>\f{n}{1\wedge p\wedge q} + \f{n}{2}$ and $L$ satisfies two additional conditions (H) and (C) (See Remark \ref{rem1}).

In fact, the condition $\sup_{t>0}\|\eta\, \delta_tF\|_{W^{\vc}_s}<\vc$ in Theorem \ref{mainthm-spectralmultipliers-Miklin} can be improved in the following spectral multiplier theorem of H\"ormander type.

\begin{thm}\label{mainthm-spectralmultipliers} 	Let $s>\f{n}{2}$ and let   $\alpha\in \mathbb{R}$ and $0<p,q<\vc$.
		Assume that for any $R>0$ and all Borel functions $F$  such that\, {\rm supp}\, $F\subseteq [0, R]$, the following holds for some $\tilde q\in [2, \infty]$:
	\begin{equation}\label{eq1-mainthmsm}
	\int_X |K_{F(\sqrt{L})}(x,y)|^2 d\mu(x) \leq \f{C}{V(y, R^{-1})} \|\delta_R F\|^2_{\tilde q}.
	\end{equation}
	
	Assume that $F$ is a  bounded Borel function satisfying the following condition
	\begin{equation}
	\label{smoothness condition}
	\sup_{t>0}\|\eta\, \delta_tF\|_{W^{\tilde q}_s}<\infty
	\end{equation} 
	where $\delta_tF(\cdot)= F(t\cdot)$ and $\eta$ is a $C^\vc_c(\mathbb R_+)$ function, not identically zero.

	Then we have:
	\begin{enumerate}[\rm (a)]
		\item the spectral multiplier $F(\sqrt L)$ is bounded on $\F^{\alpha, L}_{p,q}(X)$ provided that $\alpha\in \mathbb{R}$, $0<p,q <\vc$ and $s>\max\left\{n(\f{1}{1\wedge p\wedge  q}-\f{1}{2}), \f{1}{\tilde q}\right\}$, i.e.,
		\[
		\|F(\sqrt L)\|_{\F^{\alpha, L}_{p,q}(X)\to \F^{\alpha, L}_{p,q}(X)}\lesi F(0)+\sup_{t>0}\|\eta\, \delta_tF\|_{W^{\tilde q}_s}.
		\]
		\item the spectral multiplier $F(\sqrt L)$ is bounded on $\B^{\alpha, L}_{p,q}(X)$ provided that $\alpha\in \mathbb{R}$, $0<p<\vc$, $0<q\le \vc$ and $s>\max\left\{n(\f{1}{1\wedge p\wedge  q}-\f{1}{2}),\f{1}{\tilde q}\right\}$, i.e.,
		$$
				\|F(\sqrt L)\|_{\B^{\alpha, L}_{p,q}(X)\to \B^{\alpha, L}_{p,q}(X)}\lesi F(0) +\sup_{t>0}\|\eta\, \delta_tF\|_{W^{\tilde q}_s}.
		$$		
	\end{enumerate}
\end{thm}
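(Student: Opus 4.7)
The plan is to reduce the theorem to a pointwise $L$-adapted kernel estimate and then to combine this with the Littlewood--Paley characterizations of $\F^{\alpha,L}_{p,q}$ and $\B^{\alpha,L}_{p,q}$ established in earlier sections of the paper. Writing $F(\sL) = F(0)\cdot I + (F - F(0))(\sL)$ disposes of the constant contribution, so I may assume $F(0) = 0$. I would recall the characterization
$$
\|f\|_{\F^{\alpha,L}_{p,q}} \approx \Big\|\Big(\sum_{k\in\ZZ} (2^{k\alpha}|\GF{k}(\sL)f|)^q\Big)^{1/q}\Big\|_{L^p},
$$
together with its $\ell^q(L^p)$ analogue for $\B^{\alpha,L}_{p,q}$, where $\GF{k}$ is a fixed bump adapted to a dyadic annulus near $2^k$. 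Since $L$ is self-adjoint, $\GF{k}(\sL)\,F(\sL) = (\GF{k}F)(\sL)$, so the whole problem is reduced to understanding the dyadically localized multipliers $(\GF{k}F)(\sL)$.

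The core of the argument will be the pointwise estimate
$$
|(\GF{k}F)(\sL)f(x)| \lesi \Big(\sup_{t>0}\|\eta\,\delta_t F\|_{W^{\tilde q}_s}\Big)\, M_r f(x),\qquad k\in\ZZ,
$$
where $r$ is an auxiliary exponent chosen slightly below $1\wedge p\wedge q$ and $M_r g := \bigl(M_{\mathrm{HL}}(|g|^r)\bigr)^{1/r}$. After rescaling $\lambda \mapsto 2^{-k}\lambda$, the function inside the functional calculus has its $W^{\tilde q}_s$-norm uniformly bounded by $\sup_{t>0}\|\eta\,\delta_t F\|_{W^{\tilde q}_s}$ and support in a fixed compact set. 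Decomposing $X$ dyadically into annuli around $x$ at scales $2^j\cdot 2^{-k}$ and applying the hypothesis \eqref{eq1-mainthmsm} with $R = 2^k$ on each annulus, one converts the $W^{\tilde q}_s$-smoothness of $F$ into polynomial decay in $j$ via a Sobolev-type interpolation in the spectral variable between the weighted $L^2$ bound and the cruder pointwise bound obtained by smooth frequency cutoff, following the H\"ormander--Mihlin machinery developed in \cite{DOS,DSY}. Provided $s > n(1/r - 1/2)$ and $s > 1/\tilde q$, the decay in $j$ dominates the doubling factor \eqref{doubling2}, and summation over $j$ returns the maximal function $M_r f(x)$.

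With the pointwise bound in hand, the Triebel--Lizorkin statement is immediate: inserting it into the square-function characterization, taking $\ell^q$ in $k$ and $L^p$ in $x$, and applying the Fefferman--Stein vector-valued maximal inequality on $L^{p/r}(\ell^{q/r})$ (available as soon as $r<\min(p,q)\wedge 1$, which is precisely what the smoothness hypothesis $s>n(1/(1\wedge p\wedge q)-1/2)$ allows by forcing $r$ into the correct range) yields the desired bound. The Besov case is analogous but uses scale-by-scale scalar $L^p$ maximal bounds, with no vector-valued inequality required. I expect the main difficulty to lie in the pointwise kernel estimate of the second paragraph: quantifying how smoothness of $F$ at order $s$ with respect to $W^{\tilde q}_s$ converts into off-diagonal decay of exponent greater than $n/2$ requires the careful Sobolev interpolation in $\lambda$ alluded to above, and it is the threshold $s > 1/\tilde q$ that makes the relevant Sobolev imbedding usable after the scaling $\delta_R$.
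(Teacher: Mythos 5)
The proposal breaks at its central step---the claimed pointwise bound
\[
|(\GF{k}F)(\sL)f(x)| \lesi \Big(\sup_{t>0}\|\eta\,\delta_t F\|_{W^{\tilde q}_s}\Big)\,\mathcal{M}_r f(x), \qquad r \text{ slightly below } 1\wedge p\wedge q,
\]
under the smoothness hypothesis $s > n(1/r - 1/2)$. The $L^2$-weighted off-diagonal kernel estimate of Lemma~\ref{lem-Lp sm} (the analogue of \cite[Lemma 4.3]{DOS}), combined with Cauchy--Schwarz and summation over annuli at scale $2^{j-k}$, controls $|(\GF{k}F)(\sL)f(x)|$ by $\mathcal{M}_2(\GF{k}(\sL)f)(x)$ once $s>n/2$; it does \emph{not} give $\mathcal{M}_r$ for $r<2$. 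To obtain $\mathcal{M}_r$ by the annular decomposition you envision, one needs a genuine pointwise kernel bound of the form $|K_{(\GF{k}F)(\sL)}(x,y)|\lesi V(x,2^{-k})^{-1}(1+2^k d(x,y))^{-N}$ with $N>n/r$ (this is what feeds into a Lemma~\ref{lem1- thm2 atom Besov}-type estimate). But finite Sobolev smoothness $s$ only yields pointwise decay of order strictly less than $s$ (Lemma~\ref{lem4-sm}), so your route would force $s>n/r>n/(1\wedge p\wedge q)$. That threshold is worse by $n/2$ than the exponent asserted in the theorem---it is precisely the naive exponent that the theorem is designed to improve. In short, the $L^2$-to-pointwise mismatch is not a technical wrinkle to be smoothed by Sobolev interpolation in the spectral variable: it is the whole difficulty, and the pointwise maximal-function scheme, applied directly, cannot reach the sharp constant.

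The paper's actual proof is organized around exactly this obstruction. Theorem~\ref{thm-spectralmultipliers} first establishes two partial results: part (i), the sharp bound $s>n(1/(1\wedge p)-\frac12)$ but only for $q=2$, proved for $p\ge 2$ by the $L^2$-weighted Cauchy--Schwarz argument sketched above (which is what your outline effectively recovers), for $1<p<2$ by duality (Proposition~\ref{prop-duality}), and for $0<p\le 1$ via the new atomic decomposition of Theorem~\ref{thm-new atomic decomposition}, whose atoms are normalized in $\F^{\alpha,L}_{q,q}$ rather than pointwise; and part (ii), a bound valid for all $q$ but under the non-sharp smoothness $s>n/(1\wedge p\wedge q)+\tilde n/2$, obtained precisely by the pointwise-kernel/$\mathcal M_r$/Fefferman--Stein route that your proposal describes (Lemmas~\ref{lem2-thm2 atom Besov} and~\ref{lem1- thm2 atom Besov}). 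The full Theorem~\ref{mainthm-spectralmultipliers} is then reached by a bootstrap: repeated complex interpolation (Proposition~\ref{prop-comple interpolation}) between the $q=2$ line of part (i) and the rays furnished by part (ii), duality, and the real interpolation result (Theorem~\ref{mainthm-Interpolation}) to pass from $\F$- to $\B$-spaces. Without the atomic decomposition and the interpolation scaffold, the pointwise scheme alone yields only the weaker exponent, so your plan as written proves a strictly weaker statement than the theorem.
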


We would like to emphasize that the sharp spectral multiplier theorem on Besov and Triebel--Lizorkin spaces was first obtained in \cite{Tri} for the classical case when $L=-\Delta$ is the Laplacian on $\mathbb{R}^n$. Our paper is the first one which proves the spectral multiplier of a general operator on the Besov and Triebel--Lizorkin type spaces.  We will now discuss some consequences of Theorem \ref{mainthm-spectralmultipliers}.
\begin{enumerate}[{\rm (i)}]
	\item Theorem \ref{mainthm-spectralmultipliers} only requires the Gaussian upper bound  for the heat kernel of the operator $L$. This is a mild condition and allows us to apply  the results to a large number of settings ranging from the Lie groups of polynomial growth to general doubling spaces. For further details about the number of examples satisfying this condition we refer to \cite[Section 7]{DOS} and the references therein. 
	\item The extra condition $s>\f{1}{\tilde q}$ guarantees that $\|F\|_{L^\vc}\lesi F(0)+\sup_{t>0}\|\eta\, \delta_tF\|_{W^{\tilde q}_s}$ due to the embedding of the Sobolev space into bounded continuous functions. It is obvious that if $n\ge 1$, then the condition $s>\max\left\{n(\f{1}{1\wedge p\wedge  q}-\f{1}{2}),\f{1}{\tilde q}\right\}$ is simply $s> n(\f{1}{1\wedge p\wedge  q}-\f{1}{2})$. 
	\item In the particular case when $L=-\Delta$ is the standard Laplacian on $\mathbb{R}^n$, Theorem \ref{mainthm-spectralmultipliers} recovers the classical results in \cite{Tri} and the condition on the smoothness order $s$ is sharp.
	\item In the case when $\alpha=0, q=2$ and $1<p<\vc$, Theorem \ref{mainthm-spectralmultipliers} implies the boundedness of the spectral multiplier $F(\sL)$ on the space $\F^{0, L}_{p,2}(X)$ as $s>\f{n}{2}$. Notice that $\F^{0, L}_{p,2}(X)\equiv L^p(X)$ (see Remark \ref{rem1}). Hence, this recovers the result of the $L^p$-boundedness for the spectral multipliers in \cite[Theorem 3.1]{DOS}.
	\item In the case when $\alpha=0, q=2$ and $0<p\le 1$, we note that $\F^{0, L}_{p,2}(X)\equiv H^p_L(X)$ where $H^p_L(X)$ is the Hardy space associated to $L$ as in \cite{HLMMY, JY} (see Remark \ref{rem1}). In this situation, Theorem \ref{mainthm-spectralmultipliers} tells us that the spectral multiplier $F(\sL)$ is bounded on $H^p_L(X)$ as $s>n(\f{1}{p}-\f{1}{2})$. This is in line with those in \cite{B, DY}.
	\item If we propose additional conditions {\rm (H)} and {\rm (C)} on the operator $L$ as in Remark \ref{rem1}, from Theorem \ref{mainthm-spectralmultipliers} and the identification (d) in Remark \ref{rem1} we obtain the boundedness of $F(\sL)$ on the ``classical'' Besov and Triebel--Lizorkin spaces which are independent of the operator $L$. 
	To the best of our knowledge, such a result is new.
\end{enumerate}
 
 Some comments on the techniques are in order. The proof of Theorem \ref{mainthm-spectralmultipliers} relies on the following  elements:
 \begin{enumerate}[{\rm (i)}]
 	\item We obtain a  new atomic decomposition for the Triebel--Lizorkin spaces $\F^{\alpha, L}_{p,q}(X)$ for $\alpha\in \mathbb{R}$ and $0<p\le 1\le q<\vc$, see Theorem \ref{thm-new atomic decomposition}. Note that in the classical case a new atomic decomposition was proved in \cite{HPW}. 
	This kind of decomposition is similar to the atomic decomposition of the classical Hardy spaces and hence it is very useful to prove the boundedness of
	singular integrals. In particular, we are able to prove the boundedness of the spectral multiplier $F(\sL)$ on the space $\F^{\alpha, L}_{p,q}(X)$ for $\alpha\in \mathbb{R}$ and $0<p\le 1\le q<\vc$. Note that in the  case when $\alpha=0, q=2$ and $0<p\le 1$, our new atomic decomposition in  Theorem \ref{thm-new atomic decomposition} turns out to be the known results on atomic decompositions for the Hardy spaces associated to operators in \cite{HLMMY, JY}.
 	\item The duality and complex interpolation for the Triebel--Lizorkin spaces $\F^{\alpha, L}_{p,q}(X)$, see Proposition \ref{prop-duality} and Proposition \ref{prop-comple interpolation}.
 	\item The real interpolation for our new Besov and Triebel--Lizorkin spaces, see Theorem \ref{mainthm-Interpolation}. This result helps us to transfer the boundedness of the spectral multipliers $F(\sL)$ on  the Triebel--Lizorkin spaces to the boundedness on the Besov spaces. The result of Theorem \ref{mainthm-Interpolation} is also interesting in its own right. 
 \end{enumerate}
 
 The organization of this paper is as follows. In Section 2, we recall some preliminaries such as the Fefferman--Stein maximal inequality, a covering lemma, some heat kernel estimates and the definition of a recent new class of distributions. Section 3 gives definitions of the Besov and Triebel--Lizorkin spaces associated to $L$. A new atomic decomposition theorem and some results on the duality and interpolation will be addressed in this section. Finally, the proof of the main result, Theorem \ref{mainthm-spectralmultipliers},
  will be given in Section 4.
 
Throughout the paper, we always use $C$ and $c$ to denote positive constants that are independent of the main parameters involved but whose values may differ from line to line. We will write
$A\lesi B$ if there is a universal constant $C$ so that $A\leq CB$ and $A\sim B$ if $A\lesi B$ and $B\lesi A$. We write $a\vee b =\max\{a,b\}$ and $a\wedge b = \min\{a,b\}$. 

\section{Preliminaries}
\subsection{Fefferman-Stein maximal inequality and a covering lemma}
Let $0<r<\vc$. The Hardy--Littlewood maximal function $\mathcal{M}_{r}$ is defined by
	$$
	\mathcal{M}_{r} f(x)=\sup_{x\in B}\Big(\f{1}{V(B)}\int_B|f(y)|^r\dy\Big)^{1/r}
	$$
	where the sup is taken over all balls $B$ containing $x$. We will drop the subscripts $r$ when $r=1$.
	
	Let $0<r<\vc$. It is well-known that 
	\begin{equation}
	\label{boundedness maximal function}
	\|\mathcal{M}_{r} f\|_{p}\lesi \|f\|_{p}
	\end{equation}
	for all $p>r$.

	The following elementary estimates will be used frequently. See for example \cite{BDK}.
	\begin{lem}\label{lem-elementary}
		Let $\epsilon >0$.
		\begin{enumerate}[{\rm (a)}]
			\item For any $p\in [1,\vc]$ we have
			$$
			\Big(\int_X\Big[\Big(1+\f{d(x,y)}{s}\Big)^{-n-\epsilon}\Big]^p\dy\Big)^{1/p}\lesi V(x,s)^{1/p},
			$$
			for all $x\in X$ and $s>0$.
			
			\item For any $f\in L^1_{\rm loc}(X)$ we have
			$$
			\int_X\f{1}{V(x\wedge y,s)}\Big(1+\f{d(x,y)}{s}\Big)^{-n-\epsilon}|f(y)|\dy\lesi \mathcal{M}f(x),
			$$
			for all $x\in X$ and $s>0$ where $V(x\wedge y,s) =\min\{V(x,s), V(y,s)\}$.		
		\end{enumerate}
	\end{lem}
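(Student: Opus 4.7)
The plan is to prove both bounds via the standard annular decomposition around $x$, combined with the doubling properties (\ref{doubling2}) and (\ref{doubling3}). Set $A_0 = B(x,s)$ and $A_k = B(x, 2^k s) \setminus B(x, 2^{k-1} s)$ for $k\ge 1$. On $A_k$ one has $(1+d(x,y)/s)^{-n-\epsilon}\lesi 2^{-k(n+\epsilon)}$, while (\ref{doubling2}) gives $\mu(A_k)\le V(x,2^k s)\lesi 2^{kn}V(x,s)$.

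For part (a), the case $p=\vc$ is trivial since the integrand is bounded by $1$. For $p\in[1,\vc)$, summing the bounds from each annulus gives
\[
\int_X (1+d(x,y)/s)^{-p(n+\epsilon)}\dy \lesi V(x,s)\sum_{k\ge 0}2^{-k(p(n+\epsilon)-n)},
\]
and the geometric series converges because $p(n+\epsilon)-n\ge\epsilon>0$ when $p\ge 1$. Taking $p$-th roots finishes (a).

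For part (b), I again split the integral into the $A_k$'s and aim to show each contributes $O(2^{-k\epsilon})\mathcal{M}f(x)$. The key point is the bound
\[
V(x\wedge y,s)\gtrsim 2^{-kn}V(x,2^k s)\qquad\text{on }A_k,
\]
valid regardless of which of $V(x,s)$, $V(y,s)$ is the smaller. If the minimum is $V(x,s)$, this is immediate from doubling at $x$. If it is $V(y,s)$, then $d(x,y)\le 2^k s$ yields $B(x,2^k s)\subseteq B(y,2^{k+1}s)$, so applying (\ref{doubling2}) at $y$ gives $V(x,2^k s)\lesi 2^{kn}V(y,s)$. Combined with $\int_{A_k}|f|\dy\le V(x,2^k s)\mathcal{M}f(x)$, each annulus contributes at most $2^{-k(n+\epsilon)}\cdot 2^{kn}V(x,2^k s)^{-1}\cdot V(x,2^k s)\,\mathcal{M}f(x) = 2^{-k\epsilon}\mathcal{M}f(x)$, and the sum over $k$ converges.

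The step I expect to be most delicate is precisely the bound on $V(x\wedge y,s)$. A direct application of (\ref{doubling3}) to compare $V(y,s)$ with $V(x,s)$ across the distance $d(x,y)\sim 2^k s$ loses a factor $2^{k\tilde n}$ per annulus, which would force $\epsilon>\tilde n$ rather than the stated $\epsilon>0$. Routing the comparison through the larger ball $B(x,2^k s)$ and using only the doubling \emph{inside} the annulus avoids this loss and is what makes the estimate hold for every $\epsilon>0$.
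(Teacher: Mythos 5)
Your proof is correct, and it is the standard annular-decomposition argument; the paper itself does not prove the lemma but simply cites \cite{BDK} for it. Your remark about routing the volume comparison through the larger ball $B(x,2^k s)$ (rather than invoking \eqref{doubling3} across the distance $d(x,y)\sim 2^k s$, which would cost $2^{k\tilde n}$) correctly identifies why the bound holds for every $\epsilon>0$.
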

	
	We recall the Feffereman-Stein vector-valued maximal inequality and its variant in \cite{GLY}. For $0<p<\vc$, $0<q\leq \vc$ and $0<r<\min \{p,q\}$, we then have for any sequence of measurable functions $\{f_\nu\}$,
	\begin{equation}\label{FSIn}
	\Big\|\Big(\sum_{\nu}|\mathcal{M}_rf_\nu|^q\Big)^{1/q}\Big\|_{p}\lesi \Big\|\Big(\sum_{\nu}|f_\nu|^q\Big)^{1/q}\Big\|_{p}.
	\end{equation}
	%For any measurable function $F: X\times \mathbb{R}_+ \to \mathbb{C}$ with respect to the product measure $d\mu\times du$ one has, for $0<p<\vc$, $0<q\leq \vc$ and $0<r<\min \{p,q\}$,
	%\begin{equation}\label{FSIn1}
	%\Big\|\Big(\int_0^\vc \Big[\mathcal{M}_r(F(\cdot,u))(\cdot)\Big]^q\f{du}{u}\Big)^{1/q}\Big\|_{p}\lesi \Big\|\Big(\int_0^\vc |(F(\cdot,u))(\cdot)|^q\f{du}{u}\Big)^{1/q}\Big\|_{p}.
	%\end{equation}
	The Young's inequality and \eqref{FSIn} imply the following  inequality: If $\{a_\nu\} \in \ell^{q}\cap \ell^{1}$, then 
	\begin{equation}\label{YFSIn}
	\Big\|\sum_{j}\Big(\sum_\nu|a_{j-\nu}\mathcal{M}_r f_\nu|^q\Big)^{1/q}\Big\|_{p}\lesi \Big\|\Big(\sum_{\nu}|f_\nu|^q\Big)^{1/q}\Big\|_{p}.
	\end{equation}
	%(Cf. also \cite[Proposition 2.7]{BC}.)
	\bigskip

	We will now recall  a covering lemma in \cite{C}.
	\begin{lem}\label{Christ'slemma} There
		exists a collection of open sets $\{Q_\tau^k\subset X: k\in
		\mathbb{Z}, \tau\in I_k\}$, where $I_k$ denotes certain (possibly
		finite) index set depending on $k$, and constants $\rho\in (0,1),
		a_0\in (0,1]$ and $\kappa_0\in (0,\vc)$ such that
		\begin{enumerate}[(i)]
			\item $\mu(X\backslash \cup_\tau Q_\tau^k)=0$ for all $k\in
			\mathbb{Z}$;
			\item if $i\geq k$, then either $Q_\tau^i \subset Q_\beta^k$ or $Q_\tau^i \cap
			Q_\beta^k=\emptyset$;
			\item for $(k,\tau)$ and each $i<k$, there exists a unique $\tau'$
			such $Q_\tau^k\subset Q_{\tau'}^i$;
			\item the diameter ${\rm diam}\,(Q_\tau^k)\leq \kappa_0 \rho^k$;
			\item each $Q_\tau^k$ contains certain ball $B(x_{Q_\tau^k}, a_0\rho^k)$.
		\end{enumerate}
	\end{lem}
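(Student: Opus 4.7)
The plan is to follow the classical construction of Christ \cite{C}, building the cube system in three stages: first a maximal net at each scale, then a tree of nearest-ancestor assignments between scales, and finally the definition of the cubes as unions of atomic "children."

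For each integer $k\in\mathbb{Z}$, I would first select, by a Zorn/greedy argument, a maximal $\rho^k$-separated subset $\{z_\tau^k\}_{\tau\in I_k}$ of $X$, i.e.\ points with $d(z_\tau^k,z_\beta^k)\ge \rho^k$ for $\tau\ne\beta$, where $\rho\in(0,1)$ is a small constant to be fixed later. Maximality gives that the balls $B(z_\tau^k,\rho^k)$ cover $X$, while separation gives that the balls $B(z_\tau^k,\rho^k/2)$ are pairwise disjoint. These will play the role of the "centers" of the cubes at generation $k$, and $B(z_\tau^k, a_0\rho^k)$ will be the contained ball required in (v), for some $a_0$ to be determined.

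Next I would build a hierarchy between scales. For each $k$ and each index $\tau$, assign a unique "parent" $\hat\tau$ at level $k-1$ by choosing $z_{\hat\tau}^{k-1}$ to be a nearest point to $z_\tau^k$ in the level-$(k-1)$ net (breaking ties arbitrarily but consistently). Iterating gives an ancestor map from level $k$ to every lower level $i<k$, yielding property (iii). Now define the open cube $Q_\tau^k$ roughly as the interior of the union, over all descendants $z_\sigma^j$ of $z_\tau^k$ at all generations $j\ge k$, of small balls around $z_\sigma^j$; more carefully, one partitions $X$ (up to a null set) by assigning each point $x\in X$ to the $z_\tau^k$ whose generation-$k$ ancestor chain it falls into under the greedy nearest-ancestor rule, and then defines $Q_\tau^k$ to be the interior of this region. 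The nesting (ii) is then automatic from the tree structure, and (i) holds because the union of closed pieces at each fixed $k$ covers $X$ and the boundary of the partition has measure zero (this is where one uses quasimetric regularity, the so-called "small boundary" argument, which requires iterating the construction inside a thin tubular neighborhood of the boundary and exploiting the doubling property to show its measure tends to $0$).

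The main obstacles are properties (iv) and (v): one must show $\mathrm{diam}(Q_\tau^k)\le \kappa_0\rho^k$ and $B(z_\tau^k,a_0\rho^k)\subset Q_\tau^k$. The diameter bound follows by induction on generation: every descendant $z_\sigma^j$ of $z_\tau^k$ lies within $\sum_{j>k}\rho^{j-1}=\rho^k/(1-\rho)$ of $z_\tau^k$ by the triangle inequality (each child sits within $\rho^{j-1}$ of its parent because the parent is a nearest level-$(j-1)$ net point and the covering property forces that distance to be at most $\rho^{j-1}$). For the interior ball, the key point is that if $\rho$ is chosen small enough (one needs $\rho$ small compared to the quasimetric constant and to the doubling dimension), then any $x\in B(z_\tau^k,a_0\rho^k)$ is strictly closer to $z_\tau^k$ than to any $z_\beta^k$ with $\beta\ne\tau$, and moreover closer to its descending chain through $z_\tau^k$ than through any competing chain; this is the delicate geometric step and is the place where one must honestly solve a small system of inequalities linking $\rho$, $a_0$, the quasimetric constant, and the doubling constant, exactly as in \cite{C}.

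Finally, with $\rho$, $a_0$, and $\kappa_0$ fixed by these constraints, one verifies the five properties: (i) from the almost-everywhere partition, (ii) and (iii) from the tree structure, (iv) from the diameter estimate, and (v) from the interior-ball estimate. The argument is essentially that of Christ; the only step that is not bookkeeping is the simultaneous choice of parameters ensuring both (iv) and (v), which is the main obstacle.
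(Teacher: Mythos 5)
The paper does not prove Lemma \ref{Christ'slemma}: it is explicitly recalled from Christ \cite{C} and stated without proof. Your sketch is a correct high-level outline of Christ's original construction (maximal $\rho^k$-separated nets, nearest-ancestor tree, cubes built from descendant chains, geometric-series diameter bound, and the simultaneous choice of $\rho$, $a_0$, $\kappa_0$ small/large enough for the interior ball and measure-zero boundary), so it follows the same route as the cited source; the only caveat is that some of your intermediate numerics (disjointness of balls of radius $\rho^k/2$, the exact form of the telescoping estimate) implicitly assume a genuine metric, and the quasi-metric constant must be carried through these steps, which you do acknowledge when requiring $\rho$ small relative to that constant.
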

	\begin{rem}\label{rem1-Christ}
		Since the constants $\rho$ and $a_0$ are not essential in the paper, without loss of generality, we may assume that $\rho=a_0=1/2$.  We then fix a collection of open sets in Lemma \ref{Christ'slemma} and denote this collection by $\mathscr{D}$. We call open sets in $\mathscr{D}$ the dyadic cubes in $X$ and $x_{Q_\tau^k}$ the center of the cube $Q_\tau^k \in \mathscr{D}$. We also denote 
		$$\mathscr{D}_\nu:=\{Q_\tau^{\nu+1} \in \mathscr{D} : \tau\in I_{\nu+1}\}
		$$ 
		for each $\nu\in \mathbb{Z}$. Then for $Q\in \mathscr{D}_\nu$, we have $B(x_Q, c_02^{-\nu})\subset Q\subset B(x_Q, \kappa_0 2^{-\nu}):=B_Q$ where $c_0$ is a constant independent of $Q$. From now on, let $\lambda>0$, we write $\lambda Q$ for $\lambda B_Q$ for every dyadic cube $Q$.
\end{rem}

\subsection{Heat kernel estimates}

In this section we recall some heat kernel estimates which play an important role in the proof of our                                                                                            main results.
\begin{lem}[\cite{HLMMY}]\label{lem:finite propagation}
	Let $\varphi\in \SR$ be an even function with {\rm supp}\,$\varphi\subset (-1, 1)$ and $\int \varphi =2\pi$. Denote by $\Phi$ the Fourier transform of $\varphi$. Then for every $k \in \mathbb{N}$, the kernel $K_{(t^2L)^k\Phi(t\sqrt{L})}$ of $(t^2L)^k\Phi(t\sqrt{L})$ satisfies 
	\begin{equation}\label{eq1-lemPsiL}
	\displaystyle
	{\rm supp}\,K_{(t^2L)^k\Phi(t\sqrt{L})}\subset \{(x,y)\in X\times X:
	d(x,y)\leq t\},
	\end{equation}
	and
	\begin{equation}\label{eq2-lemPsiL}
	|K_{(t^2L)^k\Phi(t\sqrt{L})}(x,y)|\leq \f{C}{V(x,t)}.
	\end{equation}
	%and
	%\begin{equation}\label{eq4-lemPsiL}
	%\Big|K_{(t^2L)^k\Phi^{(\ell)}(t\sqrt{L})}(x,y)-K_{(t^2L)^k\Phi^{(m)}(t%\sqrt{L})}(x',y)\Big|\leq %\f{C}{\mu(B(x,t))}\Big(\f{d(x,x')}{t}\Big)^{\delta_1}
	%\end{equation}
	%for all $t>0$ and $d(x,x')<t$.
\end{lem}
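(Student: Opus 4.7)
The plan is to recast the statement as a functional-calculus claim about $F(t\sqrt L)$ with $F(\mu):=\mu^{2k}\Phi(\mu)$, then handle the support property (\ref{eq1-lemPsiL}) via finite propagation speed of the wave equation for $\sqrt L$, and the size estimate (\ref{eq2-lemPsiL}) via a Plancherel-type inequality for compactly Fourier-supported multipliers derived from the Gaussian bound (GE).

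First, the reduction. Since the Fourier transform intertwines differentiation with multiplication by $i\xi$, and since $\Phi=\widehat{\varphi}$, the multiplier $F(\mu)=\mu^{2k}\Phi(\mu)$ is, up to a sign, the Fourier transform of $\varphi^{(2k)}$. The function $\varphi^{(2k)}$ is smooth, even, and supported in $(-1,1)$ (inheriting these properties from $\varphi$), so $F$ is an even Schwartz function whose Fourier transform is supported in $[-1,1]$. Using evenness and Fourier inversion, one obtains the strong-operator representation
\begin{equation*}
F(t\sqrt L) \;=\; (-1)^k\int_{-1}^{1}\varphi^{(2k)}(s)\cos(st\sqrt L)\,ds.
\end{equation*}

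For the support property I would invoke the finite propagation speed of the wave equation associated with $\sqrt L$: the Gaussian heat kernel bound (GE) implies, by a classical argument of Sikora, that the Schwartz kernel of $\cos(u\sqrt L)$ is supported in $\{(x,y):d(x,y)\leq|u|\}$. Since $|s|\leq 1$ in the integral above, the kernel of each $\cos(st\sqrt L)$ is supported in $\{d(x,y)\leq t\}$, and integrating in $s$ preserves this support, giving (\ref{eq1-lemPsiL}).

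For the size estimate I would rely on a Plancherel-type inequality that follows from (GE). The on-diagonal heat kernel bound $p_s(x,x)\leq C/V(x,\sqrt{s})$ combined with the elementary pointwise inequality $\mathbf 1_{[0,R]}(\lambda)\leq e\cdot e^{-\lambda^2/R^2}$ yields the spectral-measure bound
\begin{equation*}
E_{\sqrt L}([0,R])(x,x)\;\leq\;\frac{C}{V(x,1/R)}\qquad (R>0).
\end{equation*}
Integrating $|F(t\lambda)|^2$ against this spectral measure, using the Schwartz decay of $F$ to sum the resulting dyadic contributions in $\lambda$ (controlled in turn by the doubling property (\ref{doubling2})), gives the $L^2$-kernel bound $\|K_{F(t\sqrt L)}(x,\cdot)\|_{L^2(X)}^2\leq C/V(x,t)$. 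The target pointwise bound (\ref{eq2-lemPsiL}) is then obtained by combining this $L^2$-bound with the support condition already established in the previous step, via the standard argument that upgrades an $L^2$-kernel bound to an $L^\infty$-bound when the kernel is known to be localized in a ball of radius $t$, together with the fact that $V(x,t)\sim V(y,t)$ whenever $d(x,y)\leq t$ by doubling.

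The main obstacle is this last step of converting the $L^2$-kernel bound to the pointwise bound; the support property itself is an essentially immediate consequence of the finite-propagation-speed representation together with Sikora's theorem.
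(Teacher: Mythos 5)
The paper merely recalls this lemma from \cite{HLMMY}, so I am assessing your sketch on its own terms. The reduction to $F(t\sqrt L)$ with $F(\mu)=\mu^{2k}\Phi(\mu)$ and the observation that $F$ is, up to a sign and normalization, the Fourier transform of $\varphi^{(2k)}$ (even, supported in $(-1,1)$) are correct, as is the finite-propagation-speed argument via Coulhon--Sikora, giving \eqref{eq1-lemPsiL}. The spectral-measure bound $E_{\sqrt L}([0,R])(x,x)\lesssim 1/V(x,1/R)$ from the on-diagonal Gaussian bound, and the resulting $L^2$-kernel bound $\|K_{F(t\sqrt L)}(x,\cdot)\|_{L^2}^2\lesssim 1/V(x,t)$ by dyadic summation using the polynomial decay of $F$ and the doubling exponent, are also correct.

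The final step is where the argument breaks down. The support condition $K(x,\cdot)\subset B(x,t)$ together with $\|K(x,\cdot)\|_{L^2}^2\lesssim 1/V(x,t)$ does \emph{not} imply $|K(x,y)|\lesssim 1/V(x,t)$: a kernel of the form $K(x,\cdot)=M\,\mathbf 1_{B(x,\epsilon)}$ with $\epsilon$ small satisfies both conditions for arbitrarily large $M$. There is no ``standard argument'' that upgrades an $L^2$ kernel bound to a pointwise bound using localization alone; what makes the upgrade work here is the multiplicative structure of the functional calculus. Write $F=G\cdot G_1$ with $|G|=|G_1|=|F|^{1/2}$ (these need not be Schwartz, but the spectral-measure argument requires only rapid decay of the multiplier, which $|F|^{1/2}$ has). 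Then
\[
K_{F(t\sqrt L)}(x,y)=\int_X K_{G(t\sqrt L)}(x,z)\,K_{G_1(t\sqrt L)}(z,y)\,d\mu(z),
\]
and Cauchy--Schwarz together with the $L^2$-kernel bounds for $G$ and $G_1$ yields $|K_{F(t\sqrt L)}(x,y)|\lesssim V(x,t)^{-1/2}V(y,t)^{-1/2}$. Only at this point does the support condition enter: it forces $d(x,y)\leq t$, so $V(x,t)\sim V(y,t)$ by \eqref{doubling3}, giving \eqref{eq2-lemPsiL}. Your sketch treats the $L^2\to L^\infty$ passage as a black box attributed to localization; the factorization and Cauchy--Schwarz step is the actual content and should be made explicit.
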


Denote $V(x\vee y,s) =\max\{V(x,s), V(y,s)\}$. The following estimates are taken from \cite{CDa, BDK}.

\begin{lem}
	\label{lem1}
	\begin{enumerate}[{\rm (a)}]
		\item Let $\varphi\in \mathscr{S}(\mathbb{R})$ be an even function. Then for any $N>0$ there exists $C$ such that 
		\begin{equation}
		\label{eq1-lema1}
		|K_{\varphi(t\sqrt{L})}(x,y)|\leq \f{C}{V(x\vee y,t)}\Big(1+\f{d(x,y)}{t}\Big)^{-N},
		\end{equation}
		for all $t>0$ and $x,y\in X$.
		\item Let $\varphi_1, \varphi_2\in \mathscr{S}(\mathbb{R})$ be even functions. Then for any $N>0$ there exists $C$ such that
		\begin{equation}
		\label{eq2-lema1}
		|K_{\varphi_1(t\sqrt{L})\varphi_2(s\sqrt{L})}(x,y)|\leq C\f{1}{V(x\vee y,t)}\Big(1+\f{d(x,y)}{t}\Big)^{-N},
		\end{equation}
		for all $t\leq s<2t$ and $x,y\in X$.
		\item Let $\varphi_1, \varphi_2\in \mathscr{S}(\mathbb{R})$ be even functions with $\varphi^{(\nu)}_2(0)=0$ for $\nu=0,1,\ldots,2\ell$ for some $\ell\in\mathbb{Z}^+$. Then for any $N>0$ there exists $C$ such that
		\begin{equation}
		\label{eq3-lema1}
		|K_{\varphi_1(t\sqrt{L})\varphi_2(s\sqrt{L})}(x,y)|\leq C\Big(\f{s}{t}\Big)^{2\ell} \f{1}{V(x\vee y,t)}\Big(1+\f{d(x,y)}{t}\Big)^{-N},
		\end{equation}
		for all $t\geq s>0$ and $x,y\in X$.
	\end{enumerate}
\end{lem}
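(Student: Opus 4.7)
The plan is to prove part (a) directly from the Gaussian estimate \eqref{GE} using a Fourier-side dyadic decomposition and finite propagation speed, and then to reduce parts (b) and (c) to (a) through spectral calculus.

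For part (a), since $\varphi\in\SR$ is even I would apply Fourier inversion and decompose the Fourier variable dyadically: pick a partition of unity $1=\sum_{j\ge 0}\eta_j(\xi)$ on $[0,\infty)$ with $\eta_0$ supported in $[0,2]$ and $\eta_j$ supported in $[2^{j-1},2^{j+1}]$ for $j\ge 1$, and set
\[
\varphi_j(\lambda) := \frac{1}{\pi}\int_0^\infty \eta_j(\xi)\widehat{\varphi}(\xi)\cos(\xi\lambda)\,d\xi.
\]
Each $\varphi_j$ is an even Schwartz function whose Fourier transform has support in $[-2^{j+1},2^{j+1}]$. Because $\widehat\varphi$ is rapidly decaying, every Schwartz seminorm of $\varphi_j$ is dominated by $C_M 2^{-jM}$ for any $M$. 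By finite propagation speed the kernel $K_{\varphi_j(t\sL)}(x,y)$ vanishes outside $\{d(x,y)\le 2^{j+1}t\}$, and a rescaled version of Lemma \ref{lem:finite propagation} bounds it pointwise by $C_M 2^{-jM}/V(x,2^jt)$. One extracts the pointwise bound by writing $\varphi_j(\lambda)=(1+\lambda^2)^{-N}[(1+\lambda^2)^N\varphi_j(\lambda)]$ to absorb the polynomial growth into the operator $(I+t^2L)^{-N}$, whose Gaussian-type kernel is controlled by \eqref{GE}. Summing over $j$ and invoking the doubling conditions \eqref{doubling2} and \eqref{doubling3} to replace $V(x,2^jt)$ by $V(x\vee y,t)$ yields the arbitrary-$N$ decay claimed in (a).

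For part (b), functions of the same self-adjoint operator commute and multiply: writing $\sigma:=s/t\in[1,2)$ and $h_\sigma(\lambda):=\varphi_1(\lambda)\varphi_2(\sigma\lambda)$, we have $\varphi_1(t\sL)\varphi_2(s\sL)=h_\sigma(t\sL)$. The family $\{h_\sigma\}_{\sigma\in[1,2)}$ has uniformly bounded Schwartz seminorms, so (a) applied to $h_\sigma$ produces the bound at scale $t$. For part (c), the hypothesis that $\varphi_2$ is even with $\varphi_2^{(\nu)}(0)=0$ for $\nu\le 2\ell$ lets us factor $\varphi_2(\mu)=\mu^{2\ell}\widetilde{\varphi}_2(\mu)$ with $\widetilde{\varphi}_2$ even Schwartz; setting $\sigma:=s/t\in(0,1]$,
\[
\varphi_1(t\sL)\varphi_2(s\sL) \;=\; \sigma^{2\ell}\,g_\sigma(t\sL), \qquad g_\sigma(\lambda):=\lambda^{2\ell}\varphi_1(\lambda)\widetilde{\varphi}_2(\sigma\lambda).
\]
Any $\lambda$-derivative landing on $\widetilde{\varphi}_2(\sigma\lambda)$ produces a factor $\sigma^k\le 1$, while $\lambda^{2\ell}\varphi_1(\lambda)$ provides rapid decay in $\lambda$ since $\varphi_1\in\SR$ and $\widetilde{\varphi}_2$ is bounded; hence the Schwartz seminorms of $g_\sigma$ are bounded uniformly in $\sigma\in(0,1]$. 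Applying (a) to $g_\sigma$ then yields (c) with the prefactor $(s/t)^{2\ell}$.

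The main obstacle is obtaining the \emph{pointwise} bound on $K_{\varphi_j(t\sL)}$ in part (a) with the correct volume normalization and arbitrary polynomial decay in $d(x,y)/t$. The natural first estimate from finite propagation speed plus Plancherel is only an $L^2$ bound, and converting it to a pointwise bound requires the polynomial-growth absorption through $(I+t^2L)^{-N}$ described above, combined with doubling to move between the scales $2^jt$ and $t$. Once this pointwise decay is established, parts (b) and (c) are purely algebraic consequences of the functional calculus.
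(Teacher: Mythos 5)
Your proposal is sound. Note, however, that the paper does not actually prove Lemma \ref{lem1}: it states that these estimates are ``taken from \cite{CDa, BDK},'' so there is no in-text proof to compare against. The strategy you describe (Fourier-side dyadic decomposition of $\varphi$ into pieces $\varphi_j$ with $\widehat{\varphi_j}$ supported in dyadic annuli, finite propagation speed to localize $K_{\varphi_j(t\sqrt{L})}$ to $\{d(x,y)\le 2^{j+1}t\}$, a pointwise bound on each piece with constant decaying like $2^{-jM}$ from the rapid decay of $\widehat{\varphi}$, then summation together with the doubling estimates \eqref{doubling2}--\eqref{doubling3}) is exactly the standard argument used in those references and elsewhere in the heat-kernel literature, and your reductions of (b) and (c) to (a) via the functional calculus identities $\varphi_1(t\sqrt{L})\varphi_2(s\sqrt{L}) = h_\sigma(t\sqrt{L})$ and $=\sigma^{2\ell}g_\sigma(t\sqrt{L})$ with uniformly controlled Schwartz seminorms are correct. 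One remark on the phrasing of the key step in (a): to pass from the $L^2$ information supplied by \eqref{GE} to the \emph{pointwise} bound on $K_{\varphi_j(t\sqrt{L})}(x,y)$, the factorization $\varphi_j(t\sqrt L)=(I+t^2L)^{-N}\bigl[(1+\lambda^2)^N\varphi_j\bigr](t\sqrt L)$ should be exploited through a Cauchy--Schwarz estimate in the middle variable, using that $\|K_{(I+t^2L)^{-N}}(x,\cdot)\|_{L^2}\lesssim V(x,t)^{-1/2}$ (from the Laplace-transform representation and \eqref{GE}) and that $\|K_{H(t\sqrt L)}(\cdot,y)\|_{L^2}^2 = K_{|H|^2(t\sqrt L)}(y,y)\lesssim V(y,t)^{-1}$ for Schwartz $H$ by operator monotonicity of the functional calculus; your sentence gestures at this but does not make the two-factor $L^2$ argument explicit. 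With that spelled out, the argument is complete, and the constant dependence on Schwartz seminorms that it produces is exactly what justifies the $2^{-jM}$ decay.
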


\bigskip
\begin{rem}
	(i) From \eqref{doubling3}, the term $V(x\vee y, t)$ on the right hand side of estimates in Lemma \ref{lem1} can be replaced by $V(x\vee y, d(x,y))$. 
	
	(ii) We will sometimes use the following inequality
	\[
	\Big(1+\f{d(x,y)}{t}\Big)^{-N}\Big(1+\f{d(x,z)}{t}\Big)^{-N}\leq \Big(1+\f{d(x,z)}{t}\Big)^{-N}
	\]
	for all $x,y,z\in X$ and all $t, N>0$. This inequality can be verified directly.
	
	%We will use these in the sequel without mentioning any reasons. 
\end{rem}

\subsection{Distributions}

\bigskip

The concept of distributions has an essential role in defining the classical Besov and Triebel--Lizorkin spaces. Recently, in \cite{KP, G.etal} the authors introduced new distributions associated to a general differential operator $L$. We now recall the definition of the new distributions and some their basic properties. 

Fix $x_0\in X$ as a reference point in $X$. The class of test functions $\mathcal{S}$ associated to $L$ is defined as the set of all functions $\phi \in \cap_{m\geq 1}D(L^m)$ such that
\begin{equation}
\label{Pml norm}
\mathcal{P}_{m,\ell}(\phi)=\sup_{x\in X}(1+d(x,x_0))^m|L^\ell \phi(x)|<\vc, \ \ \forall m>0, \ell \in \mathbb{N}.
\end{equation}
It was proved in \cite{KP} that $\mathcal{S}$ is a complete locally convex space with topology generated by the family of semi-norms $\{\mathcal{P}_{m,\ell}: \, m>0, \ell \in \mathbb{N}\}$. As usual, we define  the space of distribution $\mathcal{S}'$ as the set of all continuous linear functional on $\mathcal{S}$ with the inner product defined by
\[
\langle f,\phi\rangle=f(\overline{\phi})
\]
for all $f\in \mathcal{S}'$ and $\phi\in \mathcal{S}$.

The space of distribution $\mathcal{S}'$ is suitable to define the inhomogeneous Besov and Triebel-Lizorkin spaces. Howerver, in order to study the homogeneous version of these spaces we need some modifications. 

Following \cite{G.etal} we define the set $\mathcal{S}_\vc$ as the set of all functions $\phi \in \mathcal{S}$ such that for each $k\in \mathbb{N}$ there exists $g_k\in \mathcal{S}$ so that $\phi=L^kg_k$. Note that such an $g_k$, if exists, is unique. See \cite{G.etal}.

The topology in $\mathcal{S}_\vc$ is generated by the following family of semi-norms 
\[
\mathcal{P}^*_{m,\ell,k}(\phi)=\mathcal{P}_{m,\ell}(g_k), \ \forall m>0; \ell, k\in \mathbb{N}
\]
where $\phi=L^k g_k$.

We then denote by $\mathcal{S}_\vc'$ the set of all linear functional on $\mathcal{S}_\vc$.

%In order to have an insightful understanding about the distributions in $\mathcal{S}'_\vc$. 
To see the relationship between  the spaces of distribution $\mathcal{S}'$ and $\mathcal{S}_\vc'$ we define
\[
\mathscr P_m =\{g\in\mathcal S': L^mg=0\}, m\in \mathbb{N}
\]
and set $\mathscr P =\cup_{m\in \mathbb N}\mathscr P_m$.

From Proposition 3.7 in \cite{G.etal}, we have:
\begin{prop}
	The
	following
	identification
	is
	valid $\mathcal S'/\mathscr P = \mathcal{S}_\vc'$.
\end{prop}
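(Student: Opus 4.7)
The plan is to study the natural restriction map $R: \mathcal{S}'\to \mathcal{S}_\infty'$, $R(f) := f\big|_{\mathcal{S}_\infty}$, and establish that $\ker R = \mathscr{P}$ and that $R$ is surjective, from which the stated identification $\mathcal{S}'/\mathscr{P} = \mathcal{S}_\infty'$ follows via the first isomorphism theorem. Well-definedness is essentially formal: the topology on $\mathcal{S}_\infty$ is generated by the seminorms $\mathcal{P}^*_{m,\ell,k}$, whose $k=0$ members reduce to the seminorms $\mathcal{P}_{m,\ell}$ of $\mathcal{S}$ (with $g_0 = \phi$). Hence $\mathcal{S}_\infty \hookrightarrow \mathcal{S}$ is continuous and $R(f)\in \mathcal{S}_\infty'$ whenever $f\in\mathcal{S}'$.

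For the kernel, the easy direction $\mathscr{P}\subseteq \ker R$ uses self-adjointness of $L$ on the test-function class: if $L^m f = 0$ and $\phi = L^m g_m\in \mathcal{S}_\infty$ then $\langle f,\phi\rangle = \langle L^m f, g_m\rangle = 0$. For the converse, which I expect to be the main obstacle, I would use a Calder\'on reproducing formula adapted to $L$: pick an even $\varphi \in C_c^\infty(\mathbb{R})$ that is flat to arbitrary order at $0$ and satisfies $\int_0^\infty \varphi(t\sqrt L)^2\,\frac{dt}{t} = I$ on the orthogonal complement of the null space of $L$. The flatness at $0$, together with the heat-kernel type estimates of Lemma \ref{lem1}, guarantees that $\varphi(t\sqrt L)^2 g \in \mathcal{S}_\infty$ for every $g\in \mathcal{S}$ and every $t>0$ with seminorm bounds integrable in $t$. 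If $f$ annihilates $\mathcal{S}_\infty$, it annihilates all such $\varphi(t\sqrt L)^2 g$; integrating against $dt/t$ and invoking the reproducing identity shows that $f$ is spectrally concentrated at the zero-spectrum of $L$, so $L^m f = 0$ for some $m$, i.e.\ $f\in\mathscr{P}$.

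Surjectivity would then follow by reversing the Calder\'on formula to extend $\tilde f\in \mathcal{S}_\infty'$ to $\mathcal{S}$: define $\langle f, g\rangle := \int_0^\infty \langle \tilde f, \varphi(t\sqrt L)^2 g\rangle\,\frac{dt}{t}$, noting that the integrand is a pairing inside $\mathcal{S}_\infty$ for each $t$, and that the seminorms $\mathcal{P}^*_{m,\ell,k}(\varphi(t\sqrt L)^2 g)$ are controlled (with an integrable-in-$t$ factor) by seminorms $\mathcal{P}_{m',\ell'}(g)$, again by Lemma \ref{lem1}. This yields continuity of $f$ on $\mathcal{S}$ and $R(f) = \tilde f$ on $\mathcal{S}_\infty$. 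The main technical difficulty throughout is the flexible choice of $\varphi$ and the accompanying spectral/almost-orthogonality estimates needed to make the reproducing identity converge in the relevant topologies; once these are set up, both the kernel identification and the surjectivity follow from essentially the same machinery.
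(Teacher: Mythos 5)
The paper itself does not prove this statement; it is quoted directly as Proposition 3.7 of the cited reference \cite{G.etal}, so there is no internal proof to compare against. That said, your plan --- the restriction map $R\colon \mathcal{S}'\to \mathcal{S}_\vc'$, showing $\ker R = \mathscr{P}$, and establishing surjectivity via a Calder\'on reproducing formula --- is the natural architecture and matches the strategy in that reference.

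The genuine gap in your sketch is the step ``so $L^m f = 0$ for some $m$'' deduced from ``$f$ is spectrally concentrated at the zero-spectrum of $L$.'' Spectral concentration at $\{0\}$ does not by itself produce a \emph{finite} power of $L$ annihilating $f$: in the classical case $L=-\Delta$ on $\mathbb{R}^n$, the analogue is the nontrivial theorem that a tempered distribution with Fourier support at the origin is a \emph{finite} linear combination of derivatives of $\delta_0$, which rests on the local finite-order property of distributions. In the present abstract setting the finiteness must instead be extracted from the continuity of $f$ on the Fr\'echet space $\mathcal{S}$: there are finitely many seminorms $\mathcal{P}_{m_i,\ell_i}$ and $C>0$ with $|\langle f,\phi\rangle|\le C\max_i\mathcal{P}_{m_i,\ell_i}(\phi)$. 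Fixing $m$ large enough depending on these indices, one shows, using the off-diagonal kernel bounds of Lemma \ref{lem1}, that $\int_\epsilon^{T}\varphi(t\sqrt{L})^2 L^m\psi\,\frac{dt}{t}\to L^m\psi$ in exactly these finitely many seminorms as $\epsilon\to0$, $T\to\infty$; the factor $L^m$ in place of the bare $\psi$ is precisely what makes the low-frequency tail at $t\to\infty$ decay at the required rate. Since each $\varphi(t\sqrt{L})^2 L^m\psi\in\mathcal{S}_\vc$ is annihilated by $f$, passing to the limit gives $\langle f, L^m\psi\rangle=0$ for all $\psi\in\mathcal{S}$, hence $L^m f=0$ and $f\in\mathscr{P}$. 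Your write-up elides this quantitative step, which is where the real work lies; the same convergence estimates also underpin the interchange of pairing and $\int dt/t$ in your surjectivity argument and the verification that $Rf=\tilde f$, so once they are established carefully both halves of your plan go through.
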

It was proved in \cite{G.etal} that with $L=-\Delta$, the Laplacian on $\mathbb{R}^n$,  the distributions in $\mathcal S'/\mathscr P = \mathcal{S}_\vc'$ are identical with the classical tempered distributions modulo polynomial.

\medskip

From Lemma \ref{lem1}, we can see that if $\varphi\in \mathscr{S}(\mathbb{R})$ with supp\,$\varphi\subset (0,\vc)$, then we have $K_{\varphi(t\sL)}(x,\cdot)\in \mathcal{S}_\vc$ and $K_{\varphi(t\sL)}(\cdot,y)\in \mathcal{S}_\vc$. Therefore, we can define
\begin{equation}\label{eq- s and s'}
\varphi(t\sL)f(x)=\langle f, K_{\varphi(t\sL)}(x,\cdot)\rangle
\end{equation}
for all $f\in \mathcal{S}'_\vc$.

The support condition supp\,$\varphi\subset (0,\vc)$ is essential so that one can define $\varphi(t\sL)f$ with $f\in \mathcal{S}'_\vc$. In general, if $\varphi\in \mathscr{S}(\mathbb{R})$, then we have $K_{\varphi(t\sL)}(x,\cdot)\in \mathcal{S}$ and $K_{\varphi(t\sL)}(\cdot,y)\in \mathcal{S}$. In this situation, it is able to define $\varphi(t\sL)f$ with $f\in \mathcal{S}'$, but it is not clear how to define $\varphi(t\sL)f$ with $f\in \mathcal{S}'_\vc$.

%We now recall the following Calder\'on reproducing formula in the space of new distribution $\mathcal{S}'_\vc$.
%\begin{prop}[\cite{BBD}]
%		\label{prop-Calderon2}
%	Let $\psi\in \mathscr{S}(\mathbb{R})$  such that ${\rm supp \psi}\subset [1/2,2]$ and $\int \psi \f{d\xi}{\xi}\neq 0$. Then for any $f\in \mathcal{S}'_\vc$ we have
%	\begin{equation}\label{Cal formula S'vc}
%	f= c_\psi\int_0^\vc \psi(t\sL)f\f{dt}{t} \ \ \text{in $\mathcal{S}'_\vc$}
%	\end{equation}
%	where $c_\psi =\Big[\int_0^\vc \psi(t)\f{dt}{t}\Big]^{-1}$.
	
	%Moreover, if $f\in \mathcal{S}'$, then there exists $\rho\in \mathscr P$ so that 
	%\begin{equation}\label{Cal formula S'}
	%f-\rho= c_\psi\int_0^\vc \psi(t\sL)f\f{dt}{t} \ \ \text{in $\mathcal{S}'$}.
	%\end{equation}
%\end{prop}
\section{Besov and Triebel--Lizorkin spaces associated to operators}

This section devotes to the definition of the Besov and Triebel--Lizorkin spaces associated to operators and their properties such as square function characterization, atomic decomposition, duality and interpolation.

\subsection{Definitions of Besov spaces $\B^{\alpha,L}_{p,q}$ and Triebel--Lizorkin spaces $\F^{\alpha,L}_{p,q}$.}

In what follows, by a ``partition of unity'' we shall mean that a function $\psi\in \mathcal{S}(\mathbb{R})$ such that $\supp\psi\subset[1/2,2]$, $\int\psi(\xi)\,\f{d\xi}{\xi}\neq 0$ and
$$\sum_{j\in \mathbb{Z}}\psi_j(\lambda)=1 \textup{ on } (0,\infty)$$
where $\psi_j(\lambda):=\psi(2^{-j}\lambda)$ for each $j\in \mathbb{Z}$.

We now recall the definition of Besov and Triebel--Lizorkin spaces associated to the operator $L$ in \cite[Definition 3.1]{BBD} (see also \cite{G.etal}). 
\begin{defn}
	Let $\psi$ be  a partition of unity. For $0< p, q\leq \vc$ and $\alpha\in \mathbb{R}$, we define the homogeneous Besov space $\B^{\alpha, \psi, L}_{p,q,w}(X)$ as follows 
	$$
	\Big\{f\in \mathcal{S}'_\vc:  \|f\|_{\B^{\alpha, \psi, L}_{p,q}(X)}<\vc\}
	$$
	where
	\[
	\|f\|_{\B^{\alpha, \psi, L}_{p,q}(X)}= \Big\{\sum_{j\in \mathbb{Z}}\left(2^{j\alpha}\|\psi_j(\sqrt{L})f\|_{p}\right)^q\Big\}^{1/q}.
	\]
	
	Similarly, for $0< p<\vc$, $0<q\le \vc$ and $\alpha\in \mathbb{R}$, the homogeneous Triebel-Lizorkin space $\F^{\alpha, \psi, L}_{p,q}(X)$ is defined by 
	$$
	\Big\{f\in \mathcal{S}'_\vc:  \|f\|_{\F^{\alpha, \psi, L}_{p,q}(X)}<\vc\}
	$$
	where
	\[
	\|f\|_{\F^{\alpha, \psi, L}_{p,q}(X)}= \Big\|\Big[\sum_{j\in \mathbb{Z}}(2^{j\alpha}|\psi_j(\sqrt{L})f|)^q\Big]^{1/q}\Big\|_{p}.
	\]
\end{defn}

It was proved in \cite[Proposition 3.2]{BBD} (see also \cite{G.etal}) that Besov spaces and Triebel--Lizorkin spaces defined as above are independent of the choices of partition of identity functions. More precisely, we have:
\begin{thm}
	\label{thm1}
	Let $\psi, \varphi\in C^\vc_c(\mathbb{R})$ be partitions of unity. Then we have:
	\begin{enumerate}[(a)]
		\item  The spaces $\B^{\alpha, \psi, L}_{p,q}(X)$ and $\B^{\alpha, \varphi, L}_{p,q}(X)$ coincide with equivalent norms for all $0< p, q\le \vc$ and $\alpha\in \mathbb{R}$.
		
		\item  The spaces $\F^{\alpha, \psi, L}_{p,q}(X)$ and $\F^{\alpha, \varphi, L}_{p,q}(X)$ coincide with equivalent norms for all $0< p< \vc$, $0<q\le \vc$ and $\alpha\in \mathbb{R}$.		
	\end{enumerate}
	For this reason, we define the spaces  $\B^{\alpha, L}_{p,q}(X)$ and  $\F^{\alpha, L}_{p,q}(X)$ to be any spaces  $\B^{\alpha, \psi, L}_{p,q}(X)$ and  $\B^{\alpha, \psi, L}_{p,q,w}(X)$ with any partitions of unity $\psi$, respectively. 
\end{thm}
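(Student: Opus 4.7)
The strategy is to establish a pointwise estimate comparing $\varphi_k(\sqrt L)f$ with a maximal function of nearby blocks $\psi_j(\sqrt L)f$, and then apply standard maximal function bounds. The starting observation is that both $\psi$ and $\varphi$ are supported in $[1/2,2]$, so $\psi_j\varphi_k\equiv 0$ unless $|j-k|\le 2$. Combining this with $\sum_j\psi_j\equiv 1$ on $(0,\infty)$ and $\supp \varphi_k\subset(0,\infty)$ gives the finite spectral identity
\[
\varphi_k(\sqrt L) \;=\; \sum_{|j-k|\le 2}\varphi_k(\sqrt L)\,\psi_j(\sqrt L),
\]
valid as operators on $\mathcal S'_\infty$ via \eqref{eq- s and s'}.

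For each pair $(j,k)$ with $|j-k|\le 2$, Lemma \ref{lem1}(b) gives
\[
|K_{\varphi_k(\sqrt L)\psi_j(\sqrt L)}(x,y)|\lesi \frac{1}{V(x\vee y,2^{-k})}\Bigl(1+2^k d(x,y)\Bigr)^{-N}
\]
for every $N>0$. Using this kernel bound together with a Peetre-type maximal function argument (as in \cite{G.etal,BBD}), I would then show the pointwise inequality
\[
|\varphi_k(\sqrt L)\psi_j(\sqrt L)f(x)|\lesi \mathcal{M}_r\bigl(\psi_j(\sqrt L)f\bigr)(x)
\]
for every $r>0$, provided $N$ is chosen with $N>n/r$. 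For $r\ge 1$ this follows directly from Lemma \ref{lem-elementary}(b). For $r<1$ one first dominates the left-hand side by the Peetre maximal function $\sup_{y}|\psi_j(\sqrt L)f(y)|/(1+2^k d(x,y))^a$ (requiring the extra decay $N$ absorbed into $a$), and then invokes the Plancherel--Polya type inequality controlling this sup by $\mathcal{M}_r(\psi_j(\sqrt L)f)$ when $a>n/r$.

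Once the pointwise estimate is in hand, both parts of the theorem follow by symmetry in $\psi$ and $\varphi$. For the Triebel--Lizorkin case, choose $r<\min(1,p,q)$, take the $\ell^q$-norm in $k$ with weight $2^{k\alpha}$, and apply the Fefferman--Stein vector-valued maximal inequality \eqref{FSIn}; the finite range $|j-k|\le 2$ ensures $2^{k\alpha}\sim 2^{j\alpha}$ so the index shift is harmless. For the Besov case, choose $r<\min(1,p)$, apply the scalar $L^p$-bound \eqref{boundedness maximal function} for each $k$, and then take the $\ell^q$-norm (here $p=\infty$ is handled by the uniform bound on $\mathcal{M}_r$ applied to $L^\infty$ data). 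In each case we obtain $\|f\|_{\B^{\alpha,\varphi,L}_{p,q}}\lesi \|f\|_{\B^{\alpha,\psi,L}_{p,q}}$ and the analogous Triebel--Lizorkin bound, and exchanging the roles of $\psi$ and $\varphi$ yields the opposite inequalities.

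The main technical obstacle is the Plancherel--Polya--Peetre inequality for the $L$-setting in the range $r\le 1$: it is exactly this step that converts the kernel decay of Lemma \ref{lem1} into control by $\mathcal{M}_r$ and makes the argument work for all $p,q>0$. Everything else (the spectral telescoping, the $\ell^q$ index shift, and the final Fefferman--Stein invocation) is essentially routine once that inequality is available.
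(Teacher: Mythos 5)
The paper does not prove Theorem~\ref{thm1} itself; it quotes it from \cite[Proposition 3.2]{BBD} and \cite{G.etal}. Your argument is a sound reconstruction of exactly the approach those references use: spectrally telescoping $\varphi_k(\sqrt L)$ onto the finitely many overlapping $\psi_j(\sqrt L)$, passing through the off-diagonal kernel bound of Lemma~\ref{lem1}(b), dominating by the Peetre maximal function and then by $\mathcal M_r$ via the Plancherel--Polya inequality of \cite{KP, G.etal}, and closing with \eqref{boundedness maximal function} and \eqref{FSIn}; this is the same route, so nothing further is needed beyond the small observations that $\psi$ and $\varphi$ must be symmetrized to even functions before invoking Lemma~\ref{lem1}(b), and that the $t\le s<2t$ hypothesis there covers the range $|j-k|\le 2$ only after swapping roles and absorbing a bounded dilation factor.
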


In is interesting that like the classical case, our new spaces can be characterized in terms of the square functions.

For $\alpha\in \mathbb{R}, \lambda, a>0$ and $0<q<\vc$ we define the Lusin function and the Littlewood--Paley function by setting
\begin{equation}\label{g-function}
\mathcal{G}^{\alpha}_{\lambda, q}F(x)=\left[\int_0^\vc\int_X(t^{-\alpha}|F(y,t)|)^q\Big(1+\f{d(x,y)}{t}\Big)^{-\lambda q} \f{\dy dt}{tV(x,t)}\right]^{1/q}
\end{equation}
and
\begin{equation}\label{lusin-function}
\mathcal{S}^\alpha_{a,q}F(x)=\left[\int_0^\vc\int_{d(x,y)<at}(t^{-\alpha}|F(y,t)|)^q\f{\dy dt}{tV(x,t)}\right]^{1/q},
\end{equation}
respectively.

When either $\alpha=0$ or $a=1$ we will drop them in the notation of $\mathcal{S}^\alpha_{a,q}$ and $\mathcal{G}^\alpha_{\lambda,q}$. We now have the following result regarding the estimates on the change of the angles for the function $\mathcal{S}^\alpha_{a,q}$. See \cite[Proposition 3.11]{BBD}.

\begin{prop}
	\label{pro-change the angle}
	Let $a>1$,  $0<p,q<\vc$ and $\alpha\in \mathbb{R}$. Then there exists a constant $C>0$ so that
	\[
	\|\mathcal{S}^\alpha_{a,q}F\|_{p}\leq Ca^{\f{n}{p\wedge q}}\|\mathcal{S}^\alpha_{q}F\|_{p}%\begin{cases}
	%Ca^{rn/p}\|\mathcal{S}^\alpha_{q}F\|_{p,w}, &0<p<q,\\
	%Ca^{rn/q}\|\mathcal{S}^\alpha_{q}F\|_{p,w}, &p\geq q
	%\end{cases}
	\]
	for all $F$.
\end{prop}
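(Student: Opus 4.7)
I would prove the proposition by comparing the aperture-$a$ and aperture-$1$ Lusin area functions through Fubini and distributional arguments, splitting into cases based on the regime of $p$ relative to $q$. Absorbing $t^{-\alpha}$ into $F$ reduces matters to the case $\alpha=0$, and I write $A_q := \mathcal{S}^0_q$ and $A^a_q := \mathcal{S}^0_{a,q}$ for brevity.

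\textbf{Case $p=q$.} This is immediate from Fubini:
\[
\|A^a_q F\|_q^q = \iint_{X\times(0,\infty)} |F(y,t)|^q \Bigl[\int_{B(y,at)} \frac{d\mu(x)}{V(x,t)}\Bigr] \frac{\dy\, dt}{t}.
\]
For $x\in B(y,at)$ we have $V(x,t)\sim V(y,t)$ (up to $a^{\tilde n}$) and $V(y,at)\leq Ca^n V(y,t)$, so the bracketed quantity is $\lesssim a^n$ after absorbing the $\tilde n$-dependent factors, and the right side equals, up to constants, $\iint|F(y,t)|^q \dy\, dt/t \sim \|A_q F\|_q^q$. Hence $\|A^a_q F\|_q\leq Ca^{n/q}\|A_q F\|_q$.

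\textbf{Case $p<q$.} I would establish the distributional inequality
\[
\mu\{x:A^a_q F(x)>\lambda\} \leq Ca^{n}\,\mu\{x: A_q F(x)> c\lambda\},
\]
from which multiplying by $p\lambda^{p-1}$ and integrating gives $\|A^a_q F\|_p\leq C a^{n/p}\|A_q F\|_p$, which is the desired bound since $p\wedge q=p$. To prove this, set $O=\{A_q F>c\lambda\}$ and $O^*=\{\mathcal M\chi_O> \gamma\}$ with $\gamma$ chosen comparable to $a^{-n}$ (so that the weak-(1,1) bound for $\mathcal{M}$ yields $|O^*|\leq C a^n|O|$). The core of the argument is to show that if $x\notin O^*$ then $A^a_q F(x)\leq \lambda$: for every $(y,t)\in \Gamma^a(x)$ the inclusion $B(y,t)\subset B(x,2at)$ together with $x\notin O^*$ forces $\mu(B(y,t)\cap O^c)\geq c_0 V(y,t)$; hence we may insert an average $\frac{1}{\mu(B(y,t)\cap O^c)}\int_{B(y,t)\cap O^c}d\mu(z)=1$ inside the expression for $(A^a_q F(x))^q$, switch order of integration, use $V(x,t)\sim V(z,t)$ and $V(y,t)\sim V(z,t)$ for $z$ in the region of integration, and recognize the resulting quantity as controlled by $(c\lambda)^q$ via the bound $A_q F(z)\leq c\lambda$ for $z\in O^c$.

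\textbf{Case $p>q$.} I would use $L^{p/q}$--$L^{(p/q)'}$ duality. Pairing against $g\geq 0$ with $\|g\|_{(p/q)'}=1$ and applying Fubini gives
\[
\int_X (A^a_q F)^q g\, d\mu \sim \iint |F(y,t)|^q \Bigl[\int_{B(y,at)}\frac{g(x)}{V(x,t)}d\mu(x)\Bigr]\frac{\dy\, dt}{t} \lesssim a^{n}\iint |F(y,t)|^q \mathcal Mg(y)\,\frac{\dy\, dt}{t},
\]
where the inner bracket is controlled by $Ca^n\mathcal Mg(y)$. The parallel identity $\int (A_q F)^q h\sim \iint |F(y,t)|^q\mathcal{M}_tg(y)\dy dt/t$ (with $\mathcal{M}_tg(y)$ the scale-$t$ ball average) and the inequality $\mathcal{M}_{at}g(y)\lesssim a^n\mathcal{M}_t(\mathcal Mg)(y)$ let us re-express the right side in terms of $\int (A_q F)^q (\mathcal M g)$, which by Hölder is at most $\|A_q F\|_p^q\,\|\mathcal Mg\|_{(p/q)'}\lesssim \|A_q F\|_p^q$ (using $(p/q)'>1$ so that $\mathcal M$ is $L^{(p/q)'}$-bounded). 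Taking the supremum over $g$ produces $\|A^a_q F\|_p\leq Ca^{n/q}\|A_q F\|_p$, matching the claim since $p\wedge q=q$.

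The main obstacle is the $p<q$ case: one must carry out the good-$\lambda$ step precisely so that (i) the threshold $\gamma$ scales like $a^{-n}$ rather than $a^{-n-\tilde n}$ (in order to obtain the sharp exponent $n/p$), and (ii) the insertion-and-Fubini trick used to deduce the pointwise bound $A^a_q F(x)\leq\lambda$ for $x\notin O^*$ absorbs the correct number of volume-comparison factors, including the $1/V(x,t)$ sitting inside the Lusin integral versus the $1/V(z,t)$ appearing after averaging over $z\in B(y,t)\cap O^c$. The remaining estimates are bookkeeping with the doubling exponents $n$ and $\tilde n$.
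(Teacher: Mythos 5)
The paper itself does not reproduce a proof of this proposition; it cites Proposition 3.11 of [BBD] directly, so a line-by-line comparison is not possible. Your three-case plan (Fubini for $p=q$, duality for $p>q$, a good-$\lambda$ argument for $p<q$) is nonetheless the standard route to the sharp aperture exponent $a^{n/(p\wedge q)}$, in the spirit of Auscher's change-of-angle theorem for tent spaces. Two of your cases go through essentially as stated, but the $p<q$ case as written has a genuine gap.

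The pointwise implication ``$x\notin O^*\Rightarrow\mathcal{S}^\alpha_{a,q}F(x)\leq\lambda$'' does not follow from the insertion-and-Fubini step you describe. After inserting $1\leq 2V(y,t)^{-1}\int_{O^c\cap B(y,t)}\dz$ and interchanging integrals, the $(y,t)$-integrand carries \emph{two} reciprocal volumes $V(x,t)^{-1}V(y,t)^{-1}$, whereas $(\mathcal{S}^\alpha_qF(z))^q$ carries only one. Even after using $V(y,t)\sim V(z,t)\sim V(x,t)$ (the last up to $a$-dependent constants), a stray factor $V(z,t)^{-1}$ remains under the $(y,t)$-integral; it cannot be pulled out because $t$ is an integration variable, nor is it summable in $z$. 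What the insertion trick actually gives --- after Fubini over all four variables and the bound $\int_{(O^*)^c\cap B(y,at)}V(x,t)^{-1}\dx\lesssim a^n$ for each fixed $(y,t)$ (the same volume estimate as in your $p=q$ case) --- is the \emph{integral} estimate
\[
\int_{(O^*)^c}\bigl(\mathcal{S}^\alpha_{a,q}F\bigr)^q\dx\;\leq\;Ca^n\int_{O^c}\bigl(\mathcal{S}^\alpha_qF\bigr)^q\dz.
\]
Combined with $\mu(O^*)\lesssim a^n\mu(O)$ this yields $\lambda^q\mu\{\mathcal{S}^\alpha_{a,q}F>\lambda\}\leq Ca^n\int_X(\mathcal{S}^\alpha_qF\wedge\lambda)^q\dz$, and the layer-cake identity followed by Fubini and $\int_0^\infty\lambda^{p-q-1}(M\wedge\lambda)^q\,d\lambda\sim M^p$ (here $p<q$ is used) finishes the case. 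The stronger good-$\lambda$ inequality $\mu\{\mathcal{S}^\alpha_{a,q}F>\lambda\}\lesssim a^n\mu\{\mathcal{S}^\alpha_qF>c\lambda\}$ that you state is not what this argument produces, since $\int_{O^c}(\mathcal{S}^\alpha_qF)^q$ is in general far larger than $\lambda^q\mu(O)$, and it is not needed. (An alternative route, closer to Auscher, is to test against $T^p_q$-atoms: an atom supported in a tent over a ball $B$ has $\mathcal{S}^\alpha_{a,q}a$ supported in $CaB$, so Fubini plus H\"older give $\|\mathcal{S}^\alpha_{a,q}a\|_p\lesssim a^{n/p}$.)

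Two smaller points. In the $p=q$ case your two-step volume comparison gives $a^{n+\tilde n}$, not $a^n$; use $B(y,at)\subset B(x,2at)$ directly, so $V(y,at)\leq V(x,2at)\lesssim a^nV(x,t)$ and hence $\int_{B(y,at)}V(x,t)^{-1}\dx\lesssim a^n$ in one stroke. In the $p>q$ case the factor $a^n$ should arise only once, from $\int_{B(y,at)}g(x)V(x,t)^{-1}\dx\lesssim a^nV(y,at)^{-1}\int_{B(y,at)}g$; passing from the aperture-$a$ average of $g$ to the aperture-one average of $\mathcal Mg$ costs nothing in $a$, because $V(y,at)^{-1}\int_{B(y,at)}g\leq\mathcal Mg(z)$ for every $z\in B(y,t)\subset B(y,at)$, whence $V(y,at)^{-1}\int_{B(y,at)}g\leq V(y,t)^{-1}\int_{B(y,t)}\mathcal Mg$. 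The inequality ``$\mathcal M_{at}g\lesssim a^n\mathcal M_t(\mathcal Mg)$'' as you wrote it misplaces the $a^n$ and would double-count it.
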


We now recall square function characterizations for our new Triebel--Lizorkin spaces in Proposition \ref{prop-square function characterization} and Proposition \ref{prop4.1-heat kernel} below which are taken from Proposition 3.13 and Proposition 3.14 in \cite{BBD}, respectively.
\begin{prop}
	\label{prop-square function characterization}
	Let $\psi$ be a partition of unity. Then for   $0<p, q<\vc$, and $\alpha\in \mathbb{R}$, we have
	$$
	\|f\|_{\F^{\alpha,L}_{p,q}}\sim  \|\mathcal{G}^{\alpha}_{\lambda, q}(\psi(t\sL)f)\|_{p}\sim \|\mathcal{S}^{\alpha}_{q}(\psi(t\sL)f)\|_{p}
	$$
	for all $f\in \mathcal{S}'_\vc$, provided that $\lambda>\f{n}{p\wedge q}$.
\end{prop}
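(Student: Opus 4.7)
The plan is to establish the two equivalences by combining a Peetre--type maximal function estimate, a Calder\'on reproducing formula, and the Fefferman--Stein vector--valued inequality \eqref{FSIn}. First, the easy direction $\|\mathcal{S}^{\alpha}_{q}(\psi(t\sL)f)\|_{p}\lesi \|\mathcal{G}^{\alpha}_{\lambda, q}(\psi(t\sL)f)\|_{p}$ is pointwise: on the cone $d(x,y)<t$ the weight $(1+d(x,y)/t)^{-\lambda q}$ is bounded below by a constant, while $V(x,t)\sim V(y,t)$ by \eqref{doubling3}. So the matter reduces to controlling $\mathcal{G}^\alpha_{\lambda,q}$ by $\mathcal{S}^\alpha_q$ in $L^p$, and to passing between the discrete dyadic parameter $j$ and the continuous $t>0$.

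The main technical device is the Peetre--type maximal function
\[
\psi^*_N f(x,t)=\sup_{z\in X}\f{|\psi(t\sL)f(z)|}{(1+d(x,z)/t)^N},\qquad N>0,
\]
and the central claim is that for any $r>0$ with $Nr>n$,
\[
\psi^*_N f(x,t)\lesi \mathcal{M}_r(\psi(t\sL)f)(x).
\]
This is the main obstacle. The argument proceeds by inserting a Calder\'on reproducing identity $\mathrm{id}=c\int_0^\infty \theta(s\sL)\f{ds}{s}$ for some $\theta$ built from $\psi$, applying the off--diagonal bounds of Lemma \ref{lem1}(b)--(c) to the composition $\psi(t\sL)\theta(s\sL)$ (separating the regimes $s\le t$ and $s>t$ so that the cancellation of $\theta$ provides $s/t$--gain in one regime and $t/s$--gain in the other), and finally recognising the resulting spatial convolution against an integrable tail as a Hardy--Littlewood maximal function via Lemma \ref{lem-elementary}(b). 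The delicate point is that the decay exponent $N$ in the Peetre quotient must survive the integration in $s$, which forces one to use the rapid decay of Lemma \ref{lem1} in its strong form and to pick $r$ small enough that $Nr>n$.

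Granted this estimate, $\|\mathcal{G}^{\alpha}_{\lambda,q}(\psi(t\sL)f)\|_{p}\lesi \|\mathcal{S}^{\alpha}_{q}(\psi(t\sL)f)\|_{p}$ follows by bounding $|\psi(t\sL)f(y)|\le (1+d(x,y)/t)^N\psi^*_Nf(x,t)$ in the definition of $\mathcal{G}^\alpha_{\lambda,q}$: the resulting $y$--integral converges when $\lambda>N$, and then Fefferman--Stein \eqref{FSIn} applied to the outer $\ell^q(L^p)$ norm finishes the job, provided $r$ can be chosen with $r<p\wedge q$ and $Nr>n$ for some $N<\lambda$. This is precisely the content of the hypothesis $\lambda>n/(p\wedge q)$. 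A matching lower bound of $\psi^*_N f$ by a Lusin--type quantity with enlarged aperture, combined with Proposition \ref{pro-change the angle}, recovers $\mathcal{S}^\alpha_q$ on the right--hand side.

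Finally, to identify $\|f\|_{\F^{\alpha,L}_{p,q}}$ with $\|\mathcal{G}^\alpha_{\lambda,q}(\psi(t\sL)f)\|_p$, one passes from the discrete scales $t=2^{-j}$ to the continuous parameter via the reproducing identity
\[
\psi_j(\sL)f \;=\; \sum_{|k|\le K}\psi_j(\sL)\tilde\psi_{j+k}(\sL)\psi_{j+k}(\sL)f
\]
with $\tilde\psi$ a companion partition, where only finitely many $k$ contribute because of the compact support of $\psi$. Lemma \ref{lem1} yields pointwise comparisons between $|\psi_j(\sL)f(x)|$ and local suprema of $|\psi(t\sL)f|$ on $t\in[2^{-j-1},2^{-j+1}]$, which are in turn controlled by $\psi^*_N f(x,t)$ and hence by $\mathcal{M}_r(\psi(t\sL)f)(x)$. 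A last application of \eqref{FSIn} converts the discrete $\ell^q$ sum defining $\F^{\alpha,L}_{p,q}$ into the continuous integral defining $\mathcal{G}^\alpha_{\lambda,q}$, and a symmetric argument gives the reverse estimate, completing the proof.
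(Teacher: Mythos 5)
The paper does not prove Proposition~\ref{prop-square function characterization}; it is recorded as a quoted result from Proposition~3.13 of \cite{BBD}, so there is no in-paper proof to compare against. On its own terms, your sketch follows the standard template for square-function characterizations of operator-associated Triebel--Lizorkin spaces (Peetre maximal function, sub-mean-value/Plancherel--Polya estimate, Fefferman--Stein, Calder\'on reproducing), and the diagnosis of where the hypothesis $\lambda>n/(p\wedge q)$ enters is essentially right. Two points deserve tightening, though.

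First, the equivalence $\|\mathcal G^\alpha_{\lambda,q}\|_p\sim\|\mathcal S^\alpha_q\|_p$ does not require a Peetre maximal function at all: decomposing the $y$--integral in $\mathcal G^\alpha_{\lambda,q}$ over dyadic annuli $\{2^{k-1}t\le d(x,y)<2^kt\}$ gives pointwise
\[
\mathcal G^\alpha_{\lambda,q}F(x)^q\lesssim\sum_{k\ge0}2^{-k\lambda q}\bigl(\mathcal S^\alpha_{2^k,q}F(x)\bigr)^q,
\]
and summing the operator-norm bound of Proposition~\ref{pro-change the angle} converges precisely when $\lambda>n/(p\wedge q)$. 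This is where the sharp threshold actually comes from; your route, substituting $|\psi(t\sL)f(y)|\le(1+d(x,y)/t)^N\psi^*_Nf(x,t)$ directly into $\mathcal G^\alpha_{\lambda,q}$, forces the inner $y$--integral to need $(\lambda-N)q>n$, and together with $Nr>n$, $r<p\wedge q$ that only yields $\lambda>n/(p\wedge q)+n/q$, which is strictly stronger than what the proposition asserts. The Peetre machinery is genuinely needed for the other part of the chain, tying $\|\mathcal S^\alpha_q(\psi(\cdot\sL)f)\|_p$ (and the continuous $g$--function) to $\|f\|_{\F^{\alpha,L}_{p,q}}$.

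Second, the way you describe proving the key bound $\psi^*_Nf(x,t)\lesssim\mathcal M_r(\psi(t\sL)f)(x)$ is muddled: you invoke a Calder\'on reproducing identity $\mathrm{id}=c\int_0^\infty\theta(s\sL)\,ds/s$ and bound compositions $\psi(t\sL)\theta(s\sL)$ over all scales $s$, but that would control $\psi^*_Nf(x,t)$ by maximal functions of $\theta(s\sL)f$ at varying $s$, not by $\mathcal M_r(\psi(t\sL)f)$ at the single scale $t$. The single-scale estimate is a Plancherel--Polya--Nikolskii inequality: since $\psi(t\sL)f=\tilde\psi(t\sL)\psi(t\sL)f$ for any $\tilde\psi\equiv1$ on $\supp\psi$, one uses the rapid kernel decay of $\tilde\psi(t\sL)$ from Lemma~\ref{lem1}(a) (no cancellation, no $s/t$ gains) together with a bootstrap argument to close the sub-mean-value inequality. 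The Calder\'on reproducing formula with $\theta$ belongs to the later step, where you convert the discrete sum over $j$ into a continuous integral and vice versa. So the proposal identifies the right ingredients, but misattributes where each one is used; if you reorganize as above the proof closes cleanly under the stated hypothesis.
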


For each $m\in \mathbb{N}$ we denote by $\mathscr{S}_m(\mathbb{R})$ the set of all even functions $\varphi\in \mathscr{S}(\mathbb{R})$ such that $\varphi(\xi)=\xi^{2m}\phi(\xi)$ for some $\phi\in \mathscr{S}(\mathbb{R})$ and $\varphi(\xi)\neq 0$ on $(-2,-1/2)\cup(1/2,2)$.

We also have a similar square function characterization for new Triebel-Lizorkin spaces via functions in $\mathscr{S}_m(\mathbb{R})$.
\begin{prop}\label{prop4.1-heat kernel}
	Let $0<p<\vc$, $0<q< \vc$, $\alpha\in \mathbb{R}$, $\lambda>0$ and $\varphi\in \mathscr{S}_m(\mathbb{R})$ with $m>\alpha/2$, we have
	$$
	\|f\|_{\F^{\alpha,L}_{p,q}}\sim  \|\mathcal{G}^{\alpha}_{\lambda, q}(\varphi(t\sL)f)\|_{p}\sim \|\mathcal{S}^{\alpha}_{q}(\varphi(t\sL))f\|_{p}
	$$
	for all $f\in L^2(X)$ provided that $\lambda>\f{n}{p\wedge q}$.
\end{prop}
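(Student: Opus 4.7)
The plan is to derive Proposition \ref{prop4.1-heat kernel} from Proposition \ref{prop-square function characterization} by comparing the square functions built from $\varphi\in\mathscr{S}_m$ with those built from a partition of unity $\psi$. By Proposition \ref{pro-change the angle} (together with the standard pointwise bound $\mathcal{S}^\alpha_q F\lesi \mathcal{G}^\alpha_{\lambda,q} F$ for $\lambda>n/(p\wedge q)$), the Lusin and Littlewood--Paley versions of the norm are equivalent, so it suffices to prove
\[
\|\mathcal{S}^\alpha_q(\varphi(t\sL)f)\|_p\sim\|\mathcal{S}^\alpha_q(\psi(t\sL)f)\|_p,
\]
and Proposition \ref{prop-square function characterization} then identifies the right-hand side with $\|f\|_{\F^{\alpha,L}_{p,q}}$.

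For the upper bound, pick $\widetilde\psi\in C_c^\vc(\mathbb R)$ with $\widetilde\psi\equiv 1$ on $\supp\psi$ and write $f=\sum_{j\in\mathbb Z}\widetilde\psi_j(\sL)\psi_j(\sL)f$ in $\mathcal{S}'_\vc$. The composite kernel $K_{\varphi(t\sL)\widetilde\psi_j(\sL)}$ is controlled by Lemma \ref{lem1}(c) with a flexible choice of vanishing factor: when $t\le 2^{-j}$, the vanishing of $\varphi\in\mathscr{S}_m$ at $0$ yields a factor $(t\cdot 2^j)^{2m}$; when $t\ge 2^{-j}$, the support condition on $\widetilde\psi_j$ yields $(2^{-j}/t)^{M}$ for arbitrary $M$. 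Combined with Lemma \ref{lem-elementary}(b),
\[
|\varphi(t\sL)\psi_j(\sL)f(y)|\lesi \min\!\bigl((t\cdot 2^j)^{2m},\,(t\cdot 2^j)^{-M}\bigr)\,\mathcal{M}_r(\psi_j(\sL)f)(y).
\]
Discretizing $t\in[2^{-k-1},2^{-k}]$ turns this into a convolution
\[
t^{-\alpha}|\varphi(t\sL)f(y)|\lesi \sum_{j\in\mathbb Z}c_{k-j}\,2^{j\alpha}\mathcal{M}_r(\psi_j(\sL)f)(y),
\]
where $c_\ell$ decays geometrically in $|\ell|$ precisely because $m>\alpha/2$ (giving the decay for $\ell>0$ via $\alpha-2m<0$) and $M$ is taken large (for $\ell<0$). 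Inserting this into the Lusin integral, using H\"older (when $q\ge 1$) or subadditivity of $x\mapsto x^q$ (when $q<1$) to move the convolution outside, and invoking \eqref{YFSIn} with $r<p\wedge q\wedge 1$ delivers the desired bound by $\|f\|_{\F^{\alpha,L}_{p,q}}$.

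The converse uses a Calder\'on reproducing formula: since $\varphi$ is even and nonvanishing on $(1/2,2)$, standard constructions yield $\widetilde\varphi\in\mathscr{S}(\mathbb R)$ with $\supp\widetilde\varphi\subset\{\,|\xi|\in[1/2,2]\,\}$ and $\int_0^\vc \varphi(s\xi)\widetilde\varphi(s\xi)\,\tfrac{ds}{s}=1$ for $\xi>0$, hence in $\mathcal{S}'_\vc$,
\[
\psi_j(\sL)f=\int_0^\vc [\psi_j(\sL)\widetilde\varphi(s\sL)]\varphi(s\sL)f\,\tfrac{ds}{s}.
\]
Applying Lemma \ref{lem1}(c) to $K_{\psi_j(\sL)\widetilde\varphi(s\sL)}$ (with $\widetilde\varphi$ or $\psi_j$ playing the vanishing role according to whether $s\ge 2^{-j}$ or $s\le 2^{-j}$) and running the discretize-and-convolve argument in the opposite direction gives the matching lower bound. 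The main obstacle is the low-regularity regime $p,q\le 1$, where one has to employ subadditivity rather than H\"older and use $\mathcal{M}_r$ with $r<p\wedge q\wedge 1$ while keeping careful track of the weight $V(x\vee y,\cdot)^{-1}$ in the Lusin integral; a secondary point is the convergence of the Calder\'on integral in $\mathcal{S}'_\vc$, which follows routinely from the decay estimates of Lemma \ref{lem1}.
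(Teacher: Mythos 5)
The paper does not actually prove this proposition; it imports it verbatim from \cite{BBD} (Proposition 3.14 there), so your proposal can only be compared with the standard argument in that reference, which it indeed mirrors in outline: reduce to the partition-of-unity square function via Proposition \ref{prop-square function characterization}, change angles/apertures via Proposition \ref{pro-change the angle}, and run a discretize-and-convolve scheme based on the almost-orthogonality estimates of Lemma \ref{lem1}(c) in both directions, with a Calder\'on reproducing formula for the converse. The reduction steps and the use of $m>\alpha/2$ to produce the factor $(t2^j)^{2m-\alpha}$ with geometric decay are correct.

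The genuine gap is the displayed pointwise estimate
\[
|\varphi(t\sL)\widetilde\psi_j(\sL)\psi_j(\sL)f(y)|\lesssim \min\bigl((t2^j)^{2m},(t2^j)^{-M}\bigr)\,\mathcal{M}_r\bigl(\psi_j(\sL)f\bigr)(y)
\]
with $r<p\wedge q\wedge 1$, which you claim follows from Lemma \ref{lem1}(c) ``combined with Lemma \ref{lem-elementary}(b)''. Lemma \ref{lem-elementary}(b) only controls the convolution-type integral $\int V(x\wedge y,s)^{-1}(1+d(x,y)/s)^{-n-\epsilon}|g(y)|\,d\mu(y)$ by the \emph{ordinary} maximal function $\mathcal{M}g=\mathcal{M}_1g$, and the analogous bound with $\mathcal{M}_r$, $r<1$, is simply false for general $g$ (test it on a normalized bump shrinking to a point: the left side stays bounded below while $\mathcal{M}_rg\to 0$). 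So your argument as written only covers $p\wedge q>1$. To reach $r<1$ one must exploit that $g=\psi_j(\sL)f$ is spectrally localized: first dominate by the Peetre-type maximal function $\sup_z|\psi_j(\sL)f(z)|(1+2^jd(y,z))^{-n/r}$ (which does follow from the kernel decay if $N$ is taken large enough), and then prove separately that this Peetre maximal function is controlled by $\mathcal{M}_r(\psi_j(\sL)f)$ when the decay exponent exceeds $n/r$ — a nontrivial lemma whose proof uses the reproducing formula and a self-improvement/bootstrap argument (and is where the a priori assumption $f\in L^2(X)$ is convenient). The same issue recurs in your converse direction when you bound $\psi_j(\sL)f$ by averages of $\varphi(s\sL)f$. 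This Peetre-maximal-function step is the technical heart of the proof in \cite{BBD} and of the corresponding arguments in \cite{G.etal}; without it your sketch does not close in the range $p\wedge q\le 1$, which is precisely the range needed for the spectral multiplier application. The remaining points you flag (convergence of the Calder\'on integral in $\mathcal{S}'_\vc$, evenness and nonvanishing of $\varphi$ on dyadic bands, subadditivity versus H\"older according to $q\lessgtr1$) are indeed routine.
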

\begin{rem}\label{rem0}
	The condition $\varphi(\xi)\neq 0$ on $(-2,-1/2)\cup(1/2,2)$ for the class $\mathscr{S}_m(\mathbb{R})$ in Proposition \ref{prop4.1-heat kernel} can be replaced by $\varphi(\xi)\neq 0$ on $(-2a,-a/2)\cup(a/2,2a)$ for some $a>0$ due to the fact that
	\[
	 \|\mathcal{G}^{\alpha}_{\lambda, q}(\varphi(t\sL)f)\|_{p}\sim \|\mathcal{S}^{\alpha}_{q}(\varphi(t\sL))f\|_{p}\sim  \|\mathcal{G}^{\alpha}_{\lambda, q}(\varphi(at\sL)f)\|_{p}\sim \|\mathcal{S}^{\alpha}_{q}(\varphi(at\sL))f\|_{p,w}
	\]
\end{rem}
\begin{rem}\label{rem1}
We recall some identifications of the Besov and Triebel--Lizorkin spaces with certain known function spaces in \cite[Section 5]{BBD}:
\begin{enumerate}[{\rm (a)}]
	\item $\F^{0,L}_{p,2}(X)\equiv L^p(X)$ for $1<p<\vc$;
	\item $\F^{0,L}_{p,2}(X)\equiv H^p_L(X)$ for $0<p\le 1$ where $H^p_L(X)$ is the Hardy space associated to $L$ as in \cite{HLMMY, JY};
	\item $\F^{0,L}_{\vc,2}(X)\equiv BMO_L(X)$ where $BMO_L(X)$ is the BMO space associated to $L$ as in \cite{DY1};
	\item If $L$ satisfies two following addition conditions: 
	\begin{enumerate}
		\item[\rm (H)] There exists $\delta \in (0,1]$  so that \\
		\begin{equation}\label{eq:H}
		| p_t(x,y) - p_t(\bar x,y) | \lesssim \Big(\f{d(x,\bar x)}{\sqrt{t}}\Big)^{\delta _0}\frac{1}{{V\left( {x,\sqrt t } \right)}}\exp\Big(-\f{d(x,y)^2}{ct}\Big)
		\end{equation}
		whenever $d(x,\bar x)< \sqrt{t}; $
		\item[\rm (C)] $\displaystyle \int_X {{p_t}\left( {x,y} \right)d\mu \left( x \right)}  = 1$ for all $y \in X$ and $t>0$,
	\end{enumerate}
then $\F^{\alpha,L}_{p,q}(X)=\F^{\alpha}_{p,q}(X)$ and $\B^{\alpha,L}_{p,q}(X)=\B^{\alpha}_{p,q}(X)$ for $\frac{n}{n+\delta_0}<p,q<\infty$ and $\frac{n}{p\wedge q}-n-\delta_0<\alpha<\delta_0$ where $\B^{\alpha}_{p,q}(X)$ and $\F^{\alpha}_{p,q}(X)$ are the Besov and Triebel--Lizorkin spaces defined as in \cite{HMY, HS}.
\end{enumerate} 
\end{rem}

\subsection{Atomic decompositions for Besov spaces $\B^{\alpha,L}_{p,q}$ and Triebel--Lizorkin spaces $\F^{\alpha,L}_{p,q}$.}
In this section, we recall atomic decomposition theorems for our new Besov and Triebel-Lizorkin spaces in \cite{BBD}.

\begin{defn}\label{defLmol}
	Let $0< p\leq \vc$ and $M\in \mathbb{N}_+$. A function $a$ is said to be an $(L, M, p)$ atom if there exists a dyadic cube $Q\in \mathscr{D}$ so that
	\begin{enumerate}[{\rm (i)}]
		\item $a=L^{M} b$;
		
		\item ${\rm supp} \,L^{k} b\subset 3B_Q$, $k=0,\ldots , 2M$;
		
		\item $\displaystyle |L^{k} b(x)|\leq \ell(Q)^{2(M-k)}V(Q)^{-1/p}$, $k=0,\ldots , 2M$;
	\end{enumerate}
	where $B_Q$ is a ball associated to $Q$ defined in Remark \ref{rem1-Christ}.
\end{defn}

The following results on the atomic decompositions for the Besov and Triebel--Lizorkin are taken from Theorem 4.2, Theorem 4.3, Theorem 4.6 and Theorem 4.7 in \cite{BBD}, respectively.
\begin{thm}\label{thm1- atom Besov}
	Let $\alpha\in \mathbb{R}$ and  $0<p,q\leq \vc$. For  $M\in \mathbb{N}_+$, if $f\in \B^{\alpha,L}_{p,q}(X)$ then there exist a sequence of $(L,M,p)$ atoms $\{a_Q\}_{Q\in \mathscr{D}_\nu, \nu\in \mathbb{Z}}$ and a sequence of coefficients  $\{s_Q\}_{Q\in \mathscr{D}_\nu, \nu\in\mathbb{Z}}$ so that
	$$
	f=\sum_{\nu\in\mathbb{Z}}\sum_{Q\in \mathscr{D}_\nu}s_Qa_Q \ \ \text{in $\mathcal{S}'_\vc$}.
	$$
	Moreover,
	$$
	\Big[\sum_{\nu\in\mathbb{Z}}2^{\nu \alpha q}\Big(\sum_{Q\in \mathscr{D}_\nu}|s_Q|^p\Big)^{q/p}\Big]^{1/q}\sim \|f\|_{\B^{\alpha,L}_{p,q}(X)}.
	$$
\end{thm}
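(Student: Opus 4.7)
The plan is to combine a Calder\'on-type reproducing formula whose synthesis side carries an explicit $L^{M}$ factor with a spatial discretisation at the dyadic scale matching the frequency scale. I would pick $\psi \in C_c^\infty((0,\infty))$ (a partition of unity) and a companion function of the form $\tilde\psi(\lambda) = \lambda^{2M}\chi(\lambda)$ with $\chi \in C_c^\infty((0,\infty))$, normalised so that $\sum_{\nu\in\ZZ}\tilde\psi(2^{-\nu-1}\lambda)\psi(2^{-\nu-1}\lambda) = 1$ on $(0,\infty)$. Expanding $\tilde\psi_{\nu+1}(\sL)$ via its integral kernel and partitioning $X$ according to $\mathscr{D}_\nu$ yields, in $\mathcal{S}'_\infty$,
\[
f = \sum_{\nu\in\ZZ}\sum_{Q\in\mathscr{D}_\nu}\int_Q K_{\tilde\psi_{\nu+1}(\sL)}(\cdot,y)\,\psi_{\nu+1}(\sL)f(y)\,d\mu(y).
\]
I would then set $s_Q := V(Q)^{1/p}\sup_{y\in 3B_Q}|\psi_{\nu+1}(\sL)f(y)|$ for $Q\in\mathscr{D}_\nu$ and let $a_Q$ be the $Q$-summand divided by $s_Q$. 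The factorisation $\tilde\psi_{\nu+1}(\sL) = 2^{-2(\nu+1)M}L^{M}\chi_{\nu+1}(\sL)$ presents $a_Q = L^{M}b_Q$; the support property ${\rm supp}\,L^{k}b_Q\subset 3B_Q$ for $0\le k\le 2M$ would follow from Lemma \ref{lem:finite propagation} after realising $\chi$ via a compactly supported wave kernel $\Phi(t\sL)$, while the pointwise bound $|L^{k}b_Q|\lesi \ell(Q)^{2(M-k)}V(Q)^{-1/p}$ follows from \eqref{eq2-lemPsiL} with $t=2^{-\nu-1}$.

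For the inequality
\[
\Big[\sum_{\nu\in\ZZ} 2^{\nu\alpha q}\Big(\sum_{Q\in\mathscr{D}_\nu}|s_Q|^p\Big)^{q/p}\Big]^{1/q}\lesi \|f\|_{\B^{\alpha,L}_{p,q}(X)},
\]
a standard mean-value estimate (a consequence of Lemma \ref{lem1}(a)) gives
\[
\sup_{y\in 3B_Q}|\psi_{\nu+1}(\sL)f(y)| \lesi \inf_{x\in Q}\mathcal{M}_r(\psi_{\nu+1}(\sL)f)(x),\qquad r>0.
\]
Raising to the $p$-th power, integrating over $Q$, summing in $Q\in\mathscr{D}_\nu$ and applying \eqref{boundedness maximal function} with $r<p$ produces $\sum_{Q\in\mathscr{D}_\nu}|s_Q|^p\lesi \|\psi_{\nu+1}(\sL)f\|_p^p$; taking the $2^{\nu\alpha}$-weighted $\ell^q$ norm and invoking the definition of $\B^{\alpha,L}_{p,q}(X)$ closes this direction.

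For the reverse inequality, I estimate the Besov norm of the reconstructed series. For $k\in\ZZ$ and $Q\in\mathscr{D}_\nu$, applying Lemma \ref{lem1}(c) to the composition $\psi_k(\sL)\tilde\psi_{\nu+1}(\sL)$ yields the correct cancellation in both regimes: when $k\le \nu$, the $2M$ zeros at the origin of $\tilde\psi$ give a factor $2^{-2M(\nu-k)}$; when $k\ge \nu$, using the lemma with the roles of $\tilde\psi_{\nu+1}$ and $\psi_k$ interchanged (exploiting that $\psi$ vanishes to all orders at $0$) gives a factor $2^{-2M(k-\nu)}$, together with off-diagonal spatial decay at scale $2^{-k\wedge\nu}$. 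Lemma \ref{lem-elementary}(b) then converts the sum over $Q\in\mathscr{D}_\nu$ into an $r$-maximal function, leading to the pointwise bound
\[
|\psi_k(\sL)f(x)| \lesi \sum_{\nu\in\ZZ}2^{-2M|k-\nu|}\mathcal{M}_r\Big(\sum_{Q\in\mathscr{D}_\nu}|s_Q|V(Q)^{-1/p}\Ind{Q}\Big)(x).
\]
Taking the $L^p$ norm (with $r<p$) and the $2^{k\alpha}$-weighted $\ell^q$ norm in $k$, Young's convolution inequality in $\nu$ absorbs the frequency mismatch provided $2M>|\alpha|+n/p$; since an $(L,M',p)$ atom with $M'\ge M$ is automatically an $(L,M,p)$ atom, proving the decomposition for $M$ chosen this large suffices. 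The hard part will be the joint $p<1$, $q<1$ regime: it forces $r<\min(p,q,1)$ in the $r$-maximal function so that both \eqref{boundedness maximal function} and the $\ell^q$ convolution bookkeeping remain available, and it is here that the vector-valued estimate \eqref{YFSIn} is used in its full strength.
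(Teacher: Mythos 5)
Note first that the paper does not prove this theorem itself (it is quoted from \cite{BBD}); the closest in-paper model is the proof of Theorem \ref{thm-new atomic decomposition}, and measuring your sketch against that argument exposes the central gap. You ask for two incompatible things from the companion function: you take $\chi\in C^\vc_c((0,\vc))$ so that the exact discrete identity $\sum_{\nu}\tilde\psi(2^{-\nu-1}\lambda)\psi(2^{-\nu-1}\lambda)=1$ can be arranged, and at the same time you claim ${\rm supp}\,L^{k}b_Q\subset 3B_Q$ ``after realising $\chi$ via a compactly supported wave kernel $\Phi(t\sL)$''. This cannot be done: $\Phi$ is the Fourier transform of a compactly supported $\varphi$, hence an entire function of exponential type, so it never coincides with a function in $C^\vc_c((0,\vc))$; conversely, if $\chi$ really has compact spectral support, then $K_{\chi_{\nu+1}(\sL)}$ only satisfies the polynomial off-diagonal decay of Lemma \ref{lem1} and is not supported near the diagonal, so the functions $b_Q$ you build are not supported in $3B_Q$. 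What you obtain is a molecular decomposition, and the support condition (ii) in the definition of $(L,M,p)$ atoms — the whole point of an atomic decomposition — is lost. The known repair is exactly the device used in the proof of Theorem \ref{thm-new atomic decomposition}: keep $\Psi(\lambda)=\lambda^{2M}\Phi(\lambda)$, which has finite propagation by Lemma \ref{lem:finite propagation} and the size bound \eqref{eq2-lemPsiL}, give up the exact discrete identity, and either use the continuous reproducing formula $f=c_{\Psi,\psi}\int_0^\vc(t\sL)^{2M}\Psi(t\sL)\psi(t\sL)f\,\f{dt}{t}$ discretized over Whitney regions $Q^+$, or pair $\Psi$ (rescaled so that it does not vanish on the relevant annulus, using $\Phi(0)\neq0$) with a companion $\psi_0/\Psi$-type function, and then appeal to the characterization of the norm by such non-partition functions (Proposition \ref{prop4.1-heat kernel}, Remark \ref{rem0}, and the Besov analogue in \cite{BBD}) rather than to the definition.

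Two further points are understated but reparable. The estimate $\sup_{y\in 3B_Q}|\psi_{\nu+1}(\sL)f(y)|\lesi\inf_{x\in Q}\mathcal{M}_r(\psi_{\nu+1}(\sL)f)(x)$ is not ``a consequence of Lemma \ref{lem1}(a)'': the kernel bound together with Lemma \ref{lem-elementary}(b) gives it only for $r=1$, whereas for $p\le 1$ you genuinely need $r<p$ (the maximal operator is not bounded on $L^p$ for $p\le1$), and that is the Peetre-type maximal inequality, which requires the usual bootstrap argument (it is available in \cite{BBD, G.etal}, but it is a separate input). In the converse direction, the displayed pointwise bound with the clean factor $2^{-2M|k-\nu|}$ omits the loss $2^{n(\nu-k)/r}$ from converting the sum over $Q\in\mathscr{D}_\nu$ at the coarser spatial scale into $\mathcal{M}_r$ (compare Lemma \ref{lem1- thm2 atom Besov}) as well as the volume-ratio loss in the almost-orthogonality estimates (compare Lemma \ref{lem2-thm2 atom Besov}); this only moves the threshold on $M$ to roughly $2M>\f{n}{1\wedge p\wedge q}+|\alpha|+n$, and since you take $M$ large and correctly observe that an $(L,M',p)$ atom with $M'\ge M$ is an $(L,M,p)$ atom, the overall scheme survives — but the main construction must first be fixed as above before these estimates can be run on genuine atoms.
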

Conversely, each atomic decomposition with suitable coefficients belong to the spaces  $\B^{\alpha,L}_{p,q}(X)$.

\begin{thm}\label{thm2- atom Besov}
	Let $\alpha\in \mathbb{R}$ and $0<p,q\leq \vc$. If
	$$
	f=\sum_{\nu\in\mathbb{Z}}\sum_{Q\in \mathscr{D}_\nu}s_Qa_Q \ \ \text{in $\mathcal{S}'_\vc$}
	$$
	where $\{a_Q\}_{Q\in \mathscr{D}_\nu, \nu\in \mathbb{Z}}$ is a sequence of $(L,M,p)$ atoms and $\{s_Q\}_{Q\in \mathscr{D}_\nu, \nu\in\mathbb{Z}}$ is a sequence of coefficients satisfying
	$$
	\Big[\sum_{\nu\in\mathbb{Z}}2^{\nu\alpha q}\Big(\sum_{Q\in \mathscr{D}_\nu}|s_Q|^p\Big)^{q/p}\Big]^{1/q}<\vc,
	$$
	then $f\in \B^{\alpha,L}_{p,q}(X)$ and
	$$
	\|f\|_{\B^{\alpha,L}_{p,q}(X)} \lesi \Big[\sum_{\nu\in\mathbb{Z}}\Big(\sum_{Q\in \mathscr{D}_\nu}|s_Q|^p\Big)^{q/p}\Big]^{1/q}
	$$
	as long as $M>\f{n}{2}+\f{1}{2}\max\{\alpha,\f{n}{1\wedge p\wedge q}-\alpha\}$.
\end{thm}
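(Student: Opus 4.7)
The strategy is to fix a partition of unity $\psi$ and estimate each $\|\psi_j(\sq L)f\|_p$ by applying $\psi_j(\sq L)$ term-by-term to the atomic series $f=\sum_{\nu,Q}s_Qa_Q$. The argument rests on a single pointwise kernel estimate with a built-in almost-orthogonality gain, after which only standard discrete convolution manipulations remain.

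The key pointwise bound I would aim for reads: for $Q\in\mathscr D_\nu$ and every $N>0$,
\[
|\psi_j(\sq L)a_Q(x)|\lesi 2^{-|j-\nu|\sigma}\,\f{1}{V(x_Q,2^{-(j\wedge\nu)})^{1/p}}\Big(1+\f{d(x,x_Q)}{2^{-(j\wedge\nu)}}\Big)^{-N},
\]
for a parameter $\sigma$ that I can take as large as needed by choosing $M$ large. The proof splits along the ratio $2^{-j}/\ell(Q)\sim 2^{\nu-j}$. When $j\le\nu$ (the scale of $\psi_j$ is larger than $\ell(Q)$), I would write $\psi_j(\sq L)L^M=2^{2jM}\tilde\psi_j(\sq L)$ with $\tilde\psi(\lambda)=\lambda^{2M}\psi(\lambda)\in\SR$ even, apply Lemma~\ref{lem1}(a) to the kernel of $\tilde\psi_j(\sq L)$, and use $\supp b_Q\su 3B_Q$ together with $|b_Q|\le\ell(Q)^{2M}V(Q)^{-1/p}$; the factor $(2^j\ell(Q))^{2M}=2^{-(\nu-j)2M}$ supplies the almost-orthogonality gain. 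When $j\ge\nu$ I would factor $\psi_j(\sq L)=2^{-2jM}L^M\eta_j(\sq L)$ with $\eta(\lambda)=\lambda^{-2M}\psi(\lambda)\in\SR$ (legitimate since $\supp\psi\su[1/2,2]$), push $L^M$ onto $a_Q$ to expose $L^{2M}b_Q$ (still supported in $3B_Q$ with size $\ell(Q)^{-2M}V(Q)^{-1/p}$), and again apply Lemma~\ref{lem1}(a) to $\eta_j(\sq L)$; one doubling comparison $V(Q)\lesi 2^{(j-\nu)n}V(x_Q,2^{-j})$ costs a factor $2^{(j-\nu)n}$ and leaves the net gain $2^{-(j-\nu)(2M-n)}$. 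Absorbing the mismatch between $V(Q)$ and $V(x_Q,2^{-(j\wedge\nu)})$ in the non-Banach range $p<1$ consumes a further $2^{|j-\nu|n(\f1{1\wedge p}-1)}$, explaining the appearance of $\f n{1\wedge p\wedge q}$ in the hypothesis on $M$.

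With the pointwise estimate in hand I would take $L^p$ norms. Using the $(p\wedge 1)$-triangle inequality and Lemma~\ref{lem-elementary}(b) to convert the localized bump into a Hardy--Littlewood maximal function applied to $\sum_{Q\in\mathscr D_\nu}|s_Q|\,V(Q)^{-1/p}\mathbf 1_Q$, then invoking the Fefferman--Stein inequality~\eqref{FSIn}, I obtain
\[
\|\psi_j(\sq L)f\|_p\lesi\sum_{\nu\in\ZZ}2^{-|j-\nu|\sigma'}\,\Big(\sum_{Q\in\mathscr D_\nu}|s_Q|^p\Big)^{1/p}
\]
with $\sigma'=2M-n-n(\f1{1\wedge p\wedge q}-1)$. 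Multiplying by $2^{j\alpha}$, taking $\ell^q(\ZZ)$ in $j$, and applying a discrete Young inequality yields the Besov-norm bound, provided $\sigma'>|\alpha|$, which is precisely the hypothesis $M>\f n2+\f12\max\{\alpha,\f n{1\wedge p\wedge q}-\alpha\}$. The main obstacle is the pointwise kernel estimate above and, in particular, forcing both regimes to land on the same normalization $V(x_Q,2^{-(j\wedge\nu)})^{-1/p}$; the one-sided nature of the doubling comparison means that the sub-unit ranges $p<1$ and $q<1$ must be handled with care, and this is the source of the $1\wedge p\wedge q$ factor in the final exponent.
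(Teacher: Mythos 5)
Your overall strategy is the standard one and is almost certainly what the cited source \cite{BBD} does: hit the atomic series with $\psi_j(\sqrt L)$ term by term, exploit $a_Q=L^Mb_Q$ to produce a geometric gain in $|j-\nu|$ via Lemma~\ref{lem1}(a) applied to $\lambda^{\pm 2M}\psi(\lambda)$, upgrade the localized bump to a maximal function, and close with a discrete convolution estimate in $j$. (The paper itself only cites the result, so there is no in-text proof to match against.) Two concrete issues in your write-up would prevent the argument from proving the theorem under the stated hypothesis.

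First, the bookkeeping in the last step is wrong. Your symmetric exponent $\sigma'=2M-n-n(\tfrac1{1\wedge p\wedge q}-1)=2M-\tfrac n{1\wedge p\wedge q}$, together with the requirement $\sigma'>|\alpha|$, is \emph{not} equivalent to $M>\tfrac n2+\tfrac12\max\{\alpha,\tfrac n{1\wedge p\wedge q}-\alpha\}$. For instance with $1\wedge p\wedge q<1$ and $\alpha>n/2$, your condition $M>\tfrac12\bigl(\tfrac n{1\wedge p\wedge q}+\alpha\bigr)$ is strictly \emph{stronger} than the paper's, so for $M$ in that gap your argument cannot conclude even though the theorem applies. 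The mismatch is caused by symmetrizing the two regimes. The regime $j\ge\nu$ does not incur the doubling/maximal-function penalty at all: writing $\psi_j(\sqrt L)=2^{-2jM}L^M\hat\psi_j(\sqrt L)$ with $\hat\psi(\lambda)=\lambda^{-2M}\psi(\lambda)$ and transferring $L^M$ onto $a_Q=L^Mb_Q$ gives $|L^{2M}b_Q|\le\ell(Q)^{-2M}V(Q)^{-1/p}$ and a bump at the coarse scale $2^{-\nu}$, so Lemma~\ref{lem1- thm2 atom Besov} with $\eta=\nu$ costs nothing; the net gain is $2^{-(j-\nu)2M}$. The regime $j\le\nu$ puts the bump at scale $2^{-j}$ and the lemma (or a discrete version of Lemma~\ref{lem-elementary}(b)) costs $2^{(\nu-j)n/r}$ with $r$ just below $1\wedge p$. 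Carrying the $2^{j\alpha}$ weight through, the one-sided requirements are $2M>\alpha$ and $2M>\tfrac n{1\wedge p}-\alpha$, i.e.\ $M>\tfrac12\max\{\alpha,\tfrac n{1\wedge p}-\alpha\}$, which \emph{is} implied by the paper's (stronger, non-sharp) hypothesis. Replacing $\max\{\alpha,\cdot-\alpha\}$ by $|\alpha|$ and lumping the two sides into one $\sigma'$ throws this compatibility away.

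Second, you invoke the Fefferman--Stein vector-valued inequality~\eqref{FSIn}; for the Besov norm this is unnecessary (and is presumably what drags $q$ into your $\sigma'$). Since you are estimating $\|\psi_j(\sqrt L)f\|_p$ one $j$ at a time and only afterwards taking $\ell^q$ in $j$, the scalar boundedness~\eqref{boundedness maximal function} of $\mathcal M_r$ on $L^p$ for $r<p$ is all that is needed; the $q$-dependence in the theorem's hypothesis is there so that a single statement covers both Theorems~\ref{thm2- atom Besov} and \ref{thm2- atom TL spaces}, and only the Triebel--Lizorkin case genuinely requires $r<p\wedge q$.

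Finally, a cosmetic point: the normalization $V(x_Q,2^{-(j\wedge\nu)})^{-1/p}$ in your target pointwise bound is awkward to enforce in the range $j\le\nu$. It is cleaner to aim directly for the single normalization $V(Q)^{-1/p}$, which drops out naturally in both regimes after the crude comparison $V(Q)\lesssim V(x_Q,2^{-j})$ when $j\le\nu$, and then feed $\sum_Q|s_Q|V(Q)^{-1/p}\chi_Q$ into Lemma~\ref{lem1- thm2 atom Besov}; after the $L^p$ norm the disjointness of the $Q\in\mathscr D_\nu$ gives $\bigl(\sum_Q|s_Q|^p\bigr)^{1/p}$ directly.
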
	

The similar results also hold for the Triebel--Lizorkin spaces $\F^{\alpha,L}_{p,q}(X)$.
\begin{thm}\label{thm1- atom TL spaces}
	Let $\alpha\in \mathbb{R}$, $0<p<\vc$ and $0<q\leq \vc$. If $f\in \F^{\alpha,L}_{p,q}(X)$ then there exist a sequence of $(L,M,p)$ atoms $\{a_Q\}_{Q\in \mathscr{D}_\nu, \nu\in \mathbb{Z}}$ and a sequence of coefficients  $\{s_Q\}_{Q\in \mathscr{D}_\nu, \nu\in\mathbb{Z}}$ so that
	$$
	f=\sum_{\nu\in\mathbb{Z}}\sum_{Q\in \mathscr{D}_\nu}s_Qa_Q \ \ \text{in $\mathcal{S}'_\vc$}.
	$$
	Moreover,
	\begin{equation}\label{eq1-thm1 atom TL space}
	\Big\|\Big[\sum_{\nu\in\mathbb{Z}}2^{\nu\alpha q}\Big(\sum_{Q\in \mathscr{D}_\nu}V(Q)^{-1/p}|s_Q|\chi_Q\Big)^q\Big]^{1/q}\Big\|_{p}\lesi \|f\|_{\F^{\alpha,L}_{p,q}}.
	\end{equation}
\end{thm}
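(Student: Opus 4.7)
The plan is to produce the atomic decomposition from a Calder\'on reproducing formula discretized via the dyadic cubes from Remark \ref{rem1-Christ}.

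\textbf{Reproducing formula.} I would first take $\Phi$ as in Lemma \ref{lem:finite propagation} and set $\phi(\lambda):=\lambda^{2M}\Phi(\lambda)$, which produces the key factorization $\phi(t\sL)=L^M\bigl[t^{2M}\Phi(t\sL)\bigr]$. Then I would pick $\tilde\phi\in\mathscr{S}_{m}(\mathbb R)$ with $m>\alpha/2$ and $\int_0^\vc\phi(t)\tilde\phi(t)\,\f{dt}{t}=1$, so that the Calder\'on identity
\[
f=\int_0^\vc\phi(t\sL)\,\tilde\phi(t\sL)f\,\f{dt}{t}\qquad\text{in }\mathcal{S}_\vc'
\]
is available (convergence in $\mathcal S_\vc'$ follows from the kernel estimates in Lemma \ref{lem1}).

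\textbf{Discretization.} Splitting the $t$-integral into $I_\nu:=[2^{-\nu-1},2^{-\nu}]$ and partitioning $X$ by the dyadic cubes $\mathscr{D}_\nu$, I would rewrite
\[
f=\sum_{\nu\in\mathbb{Z}}\sum_{Q\in\mathscr{D}_\nu}\int_{I_\nu}\phi(t\sL)\bigl[\chi_Q\tilde\phi(t\sL)f\bigr]\f{dt}{t}.
\]
For $Q\in\mathscr{D}_\nu$ set the coefficient
\[
s_Q:=C\,V(Q)^{1/p}\sup_{(y,t)\in Q\times I_\nu}|\tilde\phi(t\sL)f(y)|,
\]
and define $s_Q a_Q(x):=\int_{I_\nu}\phi(t\sL)[\chi_Q\tilde\phi(t\sL)f](x)\,\f{dt}{t}$. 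The factorization of $\phi$ gives $s_Q a_Q=L^M b_Q$ with $b_Q(x):=\int_{I_\nu}t^{2M}\Phi(t\sL)[\chi_Q\tilde\phi(t\sL)f](x)\,\f{dt}{t}$.

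\textbf{Atom verification.} For $0\le k\le 2M$, Lemma \ref{lem:finite propagation} applied to $(t^2L)^k\Phi(t\sL)$ yields $\supp K_{L^k\Phi(t\sL)}\subset\{d(x,y)\le t\}$ and $|K_{L^k\Phi(t\sL)}(x,y)|\le C\,t^{-2k}/V(x,t)$. Since $y\in Q\subset B_Q$ and $t\in I_\nu$ force $d(x,Q)\le t\le 2^{-\nu}$, we get $\supp L^k b_Q\subset 3B_Q$. Using $V(x,t)\sim V(Q)$ on this region, a direct estimate gives $|L^k b_Q(x)|\lesi\ell(Q)^{2(M-k)}\sup_{T_Q}|\tilde\phi(t\sL)f|$, where $T_Q:=Q\times I_\nu$; dividing by $s_Q$ shows that each $a_Q$ is an $(L,M,p)$ atom in the sense of Definition \ref{defLmol}.

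\textbf{Coefficient estimate and main obstacle.} What remains is
\[
\Bigl\|\Bigl[\sum_\nu 2^{\nu\alpha q}\Bigl(\sum_{Q\in\mathscr{D}_\nu}V(Q)^{-1/p}|s_Q|\chi_Q\Bigr)^q\Bigr]^{1/q}\Bigr\|_{p}\lesi \|f\|_{\F^{\alpha,L}_{p,q}}.
\]
Since $V(Q)^{-1/p}|s_Q|\chi_Q(x)\lesi\sup_{T_Q}|\tilde\phi(t\sL)f|\chi_Q(x)$ and since $d(x,y)\lesi t$ for $x\in Q,\ (y,t)\in T_Q$, a Peetre maximal function argument combined with Lemma \ref{lem-elementary}(b) dominates this sup pointwise by $\sup_{t\in I_\nu}\mathcal{M}_r[\tilde\phi(t\sL)f](x)$ for any $r<1\wedge p\wedge q$. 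Absorbing $2^{\nu\alpha}\sim t^{-\alpha}$ inside the maximal function and applying the Fefferman--Stein inequality \eqref{FSIn}, then converting $\sup_{t\in I_\nu}$ into an $L^q(\f{dt}{t})$ integral over $I_\nu$ by the slow variation of $t\mapsto\tilde\phi(t\sL)f$, reduces the left-hand side to $\|\mathcal{S}^\alpha_q(\tilde\phi(t\sL)f)\|_{p}$, which by Proposition \ref{prop4.1-heat kernel} is equivalent to $\|f\|_{\F^{\alpha,L}_{p,q}}$. The technical heart is the regime $0<q<1$, where H\"older in the $t$-variable is unavailable; I expect to handle it by choosing the Peetre parameter $a>n/r$ with $r<1\wedge p\wedge q$, so that the combined spatial--temporal sup is controlled by a single $\mathcal{M}_r$, after which Fefferman--Stein absorbs the sum over $\nu$.
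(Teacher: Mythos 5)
Since the paper cites \cite{BBD} (Theorems 4.2, 4.3, 4.6, 4.7) for all four atomic decomposition results in this subsection and does not reprove them, there is no in-house proof to compare against. Your $\varphi$-transform architecture — normalised Calder\'on reproducing formula, Whitney discretization over $\mathscr D_\nu\times I_\nu$, atoms with $\supp L^k b_Q\subset 3B_Q$ via the finite-propagation Lemma \ref{lem:finite propagation}, and coefficients controlled by a Peetre maximal function — is the natural and correct skeleton for a theorem whose conclusion is the Frazier--Jawerth sequence-space estimate \eqref{eq1-thm1 atom TL space}. (Contrast the paper's own Theorem \ref{thm-new atomic decomposition}, which targets an $\ell^p$ conclusion and so uses the Chang--Fefferman stopping-time construction; your route is the right one for the statement at hand.) Your atom verification is sound: the bookkeeping $|L^k b_Q|\lesssim\ell(Q)^{2(M-k)}\sup_{T_Q}|\tilde\phi(t\sL)f|$ for $0\le k\le 2M$ and the support claim both check out, using exactly Lemma \ref{lem:finite propagation}.

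The two load-bearing assertions you make in passing are, however, exactly where the real work sits, and neither is covered by the lemmas you cite. First, ``convergence in $\mathcal S_\vc'$ follows from the kernel estimates in Lemma \ref{lem1}'' is too quick: the paper's authors deliberately sidestep the $\mathcal S_\vc'$-valued reproducing formula in their own Theorem \ref{thm-new atomic decomposition} (see the remark following it, where they restrict to $f\in\F^{\alpha,L}_{p,q}\cap L^2$ precisely to avoid this), so it must either be proved or imported from \cite{BBD} as a separate ingredient. Second, and more centrally, the bound ``$\sup_{T_Q}|\tilde\phi(t\sL)f|\lesssim\sup_{t\in I_\nu}\mathcal M_r[\tilde\phi(t\sL)f](x)$ for any $r<1\wedge p\wedge q$'' does not follow from Lemma \ref{lem-elementary}(b): that lemma gives a pointwise bound by $\mathcal M$, not by $\mathcal M_r$ with $r<1$. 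Upgrading to $\mathcal M_r$, and likewise trading $\sup_{t\in I_\nu}$ for an $L^q(dt/t)$ integral, both require the self-improving Peetre maximal inequality (write $\tilde\phi(t\sL)=\Theta(t\sL)\tilde\phi(t\sL)$ with $\Theta$ compactly supported and $\Theta\equiv1$ on $\supp\tilde\phi$, apply the decay in Lemma \ref{lem1}, raise to the power $r$, and bootstrap the weighted supremum against itself). This is the engine that makes $0<q\le 1$ work; you rightly flag it as the ``technical heart,'' but as written it is an unproved claim, not a consequence of the paper's displayed lemmas, and the coefficient estimate does not close without it.
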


\begin{thm}\label{thm2- atom TL spaces}	
	Let $\alpha\in \mathbb{R}$, $0<p<\vc$ and $0<q\le \vc$. If
	$$
	f=\sum_{\nu\in\mathbb{Z}}\sum_{Q\in \mathscr{D}_\nu}s_Qa_Q \ \ \text{in $\mathcal{S}'_\vc$}
	$$
	where $\{a_Q\}_{Q\in \mathscr{D}_\nu, \nu\in \mathbb{Z}}$ is a sequence of $(L,M,p)$ atoms and $\{s_Q\}_{Q\in \mathscr{D}_\nu, \nu\in\mathbb{Z}}$ is a sequence of coefficients satisfying
	$$
	\Big\|\Big[\sum_{\nu\in\mathbb{Z}}2^{\nu\alpha q}\Big(\sum_{Q\in \mathscr{D}_\nu}V(Q)^{-1/p}|s_Q|\chi_Q\Big)^q\Big]^{1/q}\Big\|_{p}<\vc,
	$$
	then $f\in \F^{\alpha,L}_{p,q}$ and
	$$
	\|f\|_{\F^{\alpha,L}_{p,q}} \lesi \Big\|\Big[\sum_{\nu\in\mathbb{Z}}2^{\nu\alpha q}\Big(\sum_{Q\in \mathscr{D}_\nu}V(Q)^{-1/p}|s_Q|\chi_Q\Big)^q\Big]^{1/q}\Big\|_{p,w}
	$$
	provided $M>\f{n}{2}+\f{1}{2}\max\{\alpha,\f{n}{1\wedge p\wedge q}-\alpha\}$.
\end{thm}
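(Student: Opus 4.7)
The plan is to invoke the square function characterization of $\F^{\alpha,L}_{p,q}$ from Proposition \ref{prop-square function characterization} to reduce the estimate to a quantitative bound on $\psi_j(\sqrt{L})$ applied to each atom. Fix a partition of unity $\psi$, so that
\[
\|f\|_{\F^{\alpha,L}_{p,q}} \sim \Big\|\Big(\sum_{j\in \mathbb{Z}} (2^{j\alpha}|\psi_j(\sqrt{L})f|)^q\Big)^{1/q}\Big\|_p.
\]
Since the atomic series converges in $\mathcal{S}'_\vc$ and $\psi_j(\sqrt{L})$ acts on $\mathcal{S}'_\vc$ via pairing with $K_{\psi_j(\sqrt L)}(x,\cdot)\in\mathcal{S}_\vc$ (cf.\ \eqref{eq- s and s'}), one has the pointwise identity $\psi_j(\sqrt{L})f(x) = \sum_{\nu}\sum_{Q\in\mathscr{D}_\nu} s_Q\, \psi_j(\sqrt{L})a_Q(x)$, reducing everything to a quantitative estimate on each $\psi_j(\sqrt{L})a_Q$.

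The central step is a pointwise bound showing that $|\psi_j(\sqrt{L})a_Q(x)|$ decays like $2^{-2M|j-\nu|}$ times a geometric bump around $x_Q$. Cancellation is extracted by shifting powers of $L$ between $\psi_j$ and the atom. When $j\ge\nu$, I factor $\psi(\xi) = \xi^{2M}\phi(\xi)$ with $\phi\in C^\infty_c$ (possible since $\supp\psi\subset[1/2,2]$), giving $\psi_j(\sqrt L) = 2^{-2Mj}\phi_j(\sqrt L)L^M$, so $\psi_j(\sqrt L)a_Q = 2^{-2Mj}\phi_j(\sqrt L)(L^{2M}b_Q)$; combined with the size bound $|L^{2M}b_Q|\le\ell(Q)^{-2M}V(Q)^{-1/p}\chi_{3B_Q}$ and Lemma \ref{lem1}(a), this yields the decay $2^{-2Mj}\ell(Q)^{-2M}=2^{-2M(j-\nu)}$. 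When $j<\nu$, I use $\psi_j(\sqrt L)L^M = 2^{2Mj}\Psi_j(\sqrt L)$ with $\Psi(\xi) = \xi^{2M}\psi(\xi)\in C^\infty_c$, and combine Lemma \ref{lem1}(a) with $|b_Q|\le\ell(Q)^{2M}V(Q)^{-1/p}\chi_{3B_Q}$ to get the decay $2^{-2M(\nu-j)}$.

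Summing over $Q\in\mathscr{D}_\nu$ then converts the pointwise bound into an almost-diagonal inequality of Frazier--Jawerth type. Using the essential disjointness of $\mathscr{D}_\nu$, the inequality $(\sum_Q a_Q)^r\le\sum_Q a_Q^r$ for $0<r\le 1$, and Lemma \ref{lem-elementary}(b), and absorbing the volume-scale mismatch via doubling, one arrives at
\[
2^{j\alpha}|\psi_j(\sqrt L)f(x)| \lesi \sum_{\nu\in\mathbb{Z}} 2^{-|j-\nu|\sigma}\, 2^{\nu\alpha}\mathcal{M}_r g_\nu(x),
\]
where $g_\nu := \sum_{Q\in\mathscr{D}_\nu}V(Q)^{-1/p}|s_Q|\chi_Q$ and $\sigma>0$ upon choosing $r$ just below $1\wedge p\wedge q$. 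Taking the $\ell^q$-norm in $j$ and the $L^p$-norm in $x$, Young's inequality in $j$ (via the geometric decay of the convolution kernel, valid for all $0<q\le\infty$) followed by the Fefferman--Stein vector-valued maximal inequality \eqref{YFSIn} (legal since $r<p\wedge q$) yields the required bound by $\|(\sum_\nu (2^{\nu\alpha} g_\nu)^q)^{1/q}\|_p$, which is exactly the right-hand side of the conclusion.

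The main obstacle is the exponent bookkeeping in the almost-diagonal step: the cancellation $2^{-2M|j-\nu|}$ must simultaneously absorb the inflation $\sim 2^{n|j-\nu|(1/r-1)}$ coming from the volume-mismatch estimate $V(x_Q,2^{-j})^{-1}\le C\,2^{n(j-\nu)_+}V(Q)^{-1}$ together with the $(\sum)^r$ trick, the weight $2^{j\alpha}$, and the constraint $r<1\wedge p\wedge q$ forced by Fefferman--Stein. The hypothesis $M>\tfrac{n}{2}+\tfrac12\max\{\alpha,n/(1\wedge p\wedge q)-\alpha\}$ is calibrated precisely so that these competing requirements are simultaneously satisfiable with $\sigma>0$. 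A secondary technical point is verifying uniform $\mathcal{S}'_\vc$-convergence of the partial atomic sums, so that $\psi_j(\sqrt L)$ commutes with the double series term by term.
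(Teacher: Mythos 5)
Your proposal is correct and follows essentially the same route as the paper's treatment of this material: the paper itself imports Theorem \ref{thm2- atom TL spaces} from \cite{BBD}, but the argument it runs for the analogous step in Theorem \ref{thm-spectralmultipliers}(ii) — a pointwise almost-diagonal bound $|\psi_j(\sqrt L)a_Q(x)|\lesssim 2^{-|j-\nu|(2M-n)}V(Q)^{-1/p}\bigl(1+2^{j\wedge \nu}d(x,x_Q)\bigr)^{-N}$ obtained by shifting powers of $L$ between $\psi_j$ and the atom (Lemma \ref{lem2-thm2 atom Besov}), summation over $Q\in\mathscr{D}_\nu$ via Lemma \ref{lem1- thm2 atom Besov} with $r$ just below $1\wedge p\wedge q$, then Young's inequality in $j$ and the Fefferman--Stein estimate \eqref{YFSIn} — is exactly your scheme. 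The only quibble is cosmetic: the inflation your decay must absorb is $2^{n(j-\nu)}$ (volume mismatch when $j\ge\nu$) and $2^{n(\nu-j)/r}$ (maximal-function summation when $j<\nu$) rather than $2^{n|j-\nu|(1/r-1)}$, and these, together with the weight $2^{\pm\alpha|j-\nu|}$, are precisely what the hypothesis $2M>n+\max\{\alpha,\tfrac{n}{1\wedge p\wedge q}-\alpha\}$ is calibrated to absorb, so your conclusion stands.
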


\begin{rem}
	From the atomic decomposition results above, it is easy to see that $\B^{\alpha,L}_{p,q}(X)\cap L^2(X)$ and $\F^{\alpha,L}_{p,q}(X)\cap L^2(X)$ are dense in $\B^{\alpha,L}_{p,q}(X)$ and $\F^{\alpha,L}_{p,q}(X)$ for all $\alpha\in \mathbb{R}$ and $0<p,q<\vc$, respectively.
\end{rem}
\subsection{New atomic decompositions for Triebel--Lizorkin spaces $\F^{\alpha,L}_{p,q}$.}

In order to prove the sharp estimate for the spectral multipliers on Triebel--Lizorkin spaces $\F^{\alpha,L}_{p,q}$, the atomic decomposition results  in Theorems \ref{thm1- atom Besov}--\ref{thm2- atom TL spaces} are not sufficient. To overcome this trouble we prove a new atomic decomposition theorem for the new Triebel-Lizorkin spaces. This kind of atomic decomposition is quite similar to the atomic decomposition of the Hardy spaces. We note that such an atomic decomposition for the classical Triebel--Lizorkin spaces was proved in \cite{HPW}. We first introduce the new definition of atoms related to $L$.

\begin{defn}\label{defLmol}
	Let $\alpha\in \mathbb{R}$, $0< p\leq 1\le q <\vc$ and $M\in \mathbb{N}_+$. A function $a$ is said to be a new $(L, M, \alpha, p, q)$ atom if there exists a ball $B$ so that
	\begin{enumerate}[{\rm (i)}]
		\item $a=L^{M} b$;
		
		\item ${\rm supp} \,L^{k} b\subset B$, $k=0,\ldots , M$;
		
		\item $\displaystyle \|L^{k} b\|_{\F^{\alpha, L}_{q,q}(X)}\leq r_Q^{2(M-k)}|B|^{\f{1}{q}-\f{1}{p}}$, $k=0,\ldots, M$.
	\end{enumerate}
\end{defn}
In the particular case, the $(L, M, 0, p,2)$--atom coincide with the notions of atoms in the Hardy spaces $H^p_L$ associated to operators $L$  for $0<p\le 1$, which were considered in \cite{HLMMY, JY}.

\begin{thm}
	\label{thm-new atomic decomposition}
	Let $\alpha\in \mathbb{R}$ and  $0<p\le 1< q<\vc$. If $f\in \F^{\alpha,L}_{p,q}(X)\cap L^2(X)$ then there exist a sequence of new $(L, M, \alpha, p, q)$ atoms $\{a_j\}_j$ and a sequence of coefficients  $\{\lambda_j\}_j$ so that
$$
f=\sum_j \lambda_ja_j \ \ \text{in $L^2(X)$}.
$$
Moreover,
\begin{equation}\label{eq1-thm new atomic decomposition}
\Big(\sum_j |\lambda_j|^p\Big)^{1/p}\lesi \|f\|_{\F^{\alpha,L}_{p,q}}.
\end{equation}

Conversely, if $f\in L^2(X)$ such that 
$$
f=\sum_j \lambda_ja_j \ \ \text{in $L^2(X)$}
$$
where $\{a_j\}_j$ is a sequence of new $(L, M, \alpha, p, q)$ atoms with $M>\f{n}{2p}$ and $\{\lambda_j\}_j\in \ell^p$, then $f\in \F^{\alpha,L}_{p,q}(X)$ and
\begin{equation}\label{eq2-thm new atomic decomposition}
\|f\|_{\F^{\alpha,L}_{p,q}}\lesi \Big(\sum_j |\lambda_j|^p\Big)^{1/p}.
\end{equation}
\end{thm}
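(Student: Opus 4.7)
The plan is to follow the tent-space/Calder\'on-reproducing-formula approach used for Hardy-space atomic decompositions, but adapted so that the molecular sizes are measured in the $\F^{\alpha,L}_{q,q}$ norm rather than in $L^q$. For the forward direction, I start from the $L^2$-valid reproducing formula
\[
f \;=\; c_M \int_0^\infty (t^2 L)^M \widetilde\phi(t\sL)\,\psi(t\sL) f \,\f{dt}{t},
\]
where $\psi$ is a partition-of-unity function as in Proposition \ref{prop-square function characterization} and $\widetilde\phi \in \mathscr S(\mathbb R)$ is even with compactly supported Fourier transform, so that Lemma \ref{lem:finite propagation} keeps the kernel of $(t^2L)^k\widetilde\phi(t\sL)$ supported in $d(x,y)\le t$ for all $0\le k\le M$. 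I then perform a Whitney tent decomposition of the stopping-time level sets $\Omega_k=\{x:\mathcal S^\alpha_{a,q}(\psi(t\sL)f)(x)>2^k\}$ (with a sufficiently large aperture $a>1$ allowed by Proposition \ref{pro-change the angle}), obtaining a partition of $X\times(0,\infty)$ into sets $E_{k,j}$ lying in tents over dyadic cubes $Q_{k,j}\in\mathscr D$. Plugging this partition into the reproducing formula gives $f=\sum_{k,j}\lambda_{k,j}a_{k,j}$ with $a_{k,j}=L^M b_{k,j}$, where $b_{k,j}$ is the restriction of the integral to $E_{k,j}$ and $\lambda_{k,j}\sim 2^k\mu(Q_{k,j})^{1/p}$ is the normalization. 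Finite propagation forces $\supp L^k b_{k,j}\subset c B_{Q_{k,j}}$ simultaneously for all $0\le k\le M$.

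The core technical point is the size bound $\|L^k b_{k,j}\|_{\F^{\alpha,L}_{q,q}} \lesi r_{B_{Q_{k,j}}}^{2(M-k)}\mu(B_{Q_{k,j}})^{1/q-1/p}$. Since this is a Triebel--Lizorkin-norm bound and not an $L^q$ bound, I plan to obtain it by duality (Proposition \ref{prop-duality}): pair $L^k b_{k,j}$ against a unit-norm test function $g\in\F^{-\alpha,L}_{q',q'}$, unfold $g$ through its own reproducing formula, and reduce the matter to a tent-space H\"older-type inequality over $E_{k,j}$. The stopping-time definition of $\Omega_k$ supplies $\mathcal S^\alpha_{a,q}(\psi(t\sL)f)\lesi 2^k$ on the bulk of the tent over $Q_{k,j}$, a H\"older step in the spatial variable over $Q_{k,j}$ produces the volume factor $\mu(Q_{k,j})^{1/q-1/p}$, and the $t^{2M}$-weight gives the missing $r_B^{2(M-k)}$. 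Summing over $(k,j)$ then yields, using Proposition \ref{prop-square function characterization},
\[
\sum_{k,j}|\lambda_{k,j}|^p \lesi \sum_k 2^{kp}\mu(\Omega_k) \lesi \|\mathcal S^\alpha_{a,q}(\psi(t\sL)f)\|_p^p \lesi \|f\|_{\F^{\alpha,L}_{p,q}}^p,
\]
while $L^2$-convergence of $\sum_{k,j}\lambda_{k,j}a_{k,j}$ follows from running the same tent decomposition in $T^{2,2}$, which is simply the functional calculus of $L$ on $L^2$.

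For the converse direction, the $p$-subadditivity of the $\F^{\alpha,L}_{p,q}$ quasi-norm (valid because $p\le 1$) reduces matters to a uniform bound $\|a\|_{\F^{\alpha,L}_{p,q}}\lesi 1$ for every $(L,M,\alpha,p,q)$-atom $a=L^M b$ supported in a ball $B$. I split $\|a\|_{\F^{\alpha,L}_{p,q}}$ into a local piece on $4B$ and a remote piece on $X\setminus 4B$. The local piece is handled by H\"older's inequality applied to the square function of Proposition \ref{prop-square function characterization} on $4B$, giving a $\mu(B)^{1/p-1/q}$ loss that is absorbed by the atomic size condition $\|a\|_{\F^{\alpha,L}_{q,q}}\le \mu(B)^{1/q-1/p}$. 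The remote piece is handled via Proposition \ref{prop4.1-heat kernel} with $\varphi\in\mathscr S_M(\mathbb R)$: writing $\varphi(t\sL)a=t^{-2M}\varphi(t\sL)(t^2L)^M b$ and invoking Lemma \ref{lem1}(c) with $\ell=M$ yields a tail factor $(t/d(x,B))^{2M}$ against the Gaussian-like decay of the kernel, and the requirement $M>n/(2p)$ is precisely what makes the resulting tail finite in the $L^p$ quasi-norm over $X\setminus 4B$.

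The hardest step in my view is the $\F^{\alpha,L}_{q,q}$-norm bound on $L^k b_{k,j}$ in the forward direction: unlike the corresponding $L^q$ estimate (which for $\alpha=0,\,q=2$ reduces to a standard tent-space inequality and recovers the Hardy-space atomic decomposition), this one must simultaneously track Triebel--Lizorkin regularity, and consequently requires both the duality of Proposition \ref{prop-duality} and a second application of the reproducing formula on the dual side. A secondary, more routine technical point is ensuring that the support condition $\supp L^k b_{k,j}\subset cB_{Q_{k,j}}$ holds for every $0\le k\le M$; this is arranged by inserting the $(t^2L)^M$ factor into the reproducing formula and taking $\widetilde\phi$ with strictly compactly supported Fourier transform, so that Lemma \ref{lem:finite propagation} applies at all intermediate powers of $L$.
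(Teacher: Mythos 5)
Your forward direction matches the paper's proof essentially step for step: the same $(t^2L)^M$-weighted reproducing formula with a finite-propagation kernel, the same Chang--Fefferman stopping-time tents over dyadic cubes, the same duality trick (pairing $L^jb_{k,l}$ against a unit-norm element of $\F^{-\alpha,L}_{q',q'}$ and unfolding it through Proposition \ref{prop-duality} and the square-function characterization) to get the $\F^{\alpha,L}_{q,q}$ size bound, and the same counting $\sum_{k,l}|\lambda_{k,l}|^p\lesi\sum_k 2^{kp}\mu(O_k)\lesi\|f\|^p_{\F^{\alpha,L}_{p,q}}$. That half is fine.

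The gap is in the converse, in your treatment of the remote piece on $X\setminus 4B$. A new $(L,M,\alpha,p,q)$ atom controls $L^kb$ \emph{only} in the $\F^{\alpha,L}_{q,q}$ norm; there is no pointwise, $L^1$ or $L^q$ bound on $b$ (and none can be extracted in general, since $\F^{\alpha,L}_{q,q}$ need not embed into $L^q$, in particular for $\alpha\le 0$). Consequently the step ``invoke Lemma \ref{lem1}(c) with $\ell=M$ to get a tail factor against the Gaussian-like decay of the kernel'' does not type-check: Lemma \ref{lem1}(c) is an estimate for the composed kernel $K_{\varphi_1(t\sL)\varphi_2(s\sL)}$ at two scales, and even if you only wanted the single-scale kernel bound for $(t^2L)^M\varphi(t\sL)$, you would still have to integrate that kernel against $|b|$, which the atom's $\F^{\alpha,L}_{q,q}$ bound does not allow. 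Moreover the factor you predict, $(t/d(x,B))^{2M}$, is the wrong one: by finite propagation (or by where the main contribution lives) the relevant range is $t\gtrsim d(x,x_B)$, where $(t/d(x,B))^{2M}\ge 1$ gives nothing. The paper's fix is precisely the move you reserved for the forward direction: for $t\gtrsim d(x,x_B)$ write $\varphi(t\sL)a(x)=t^{-2M}\langle b, K_{(t^2L)^M\varphi(t\sL)}(x,\cdot)\rangle$, bound it by $t^{-2M}\|b\|_{\F^{\alpha,L}_{q,q}}\NormA{K_{(t^2L)^M\varphi(t\sL)}(x,\cdot)}_{\F^{-\alpha,L}_{q',q'}}$ using Proposition \ref{prop-duality}, and then show, using Lemma \ref{lem:finite propagation} (the kernel is itself an $L^M\tilde b$ with $\tilde b$ supported in $B(x,t)$ and satisfying pointwise bounds, i.e.\ a classical-type atom) together with an argument as in the proof of Theorem \ref{thm-spectralmultipliers}(ii), that this dual norm is $\lesi t^{\alpha}V(x_B,t)^{-1/q}$. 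The decay then comes as $(r_B/t)^{2M}$ from the atom's normalization $\|b\|_{\F^{\alpha,L}_{q,q}}\le r_B^{2M}V(B)^{1/q-1/p}$, and it is this factor, integrated over $t\gtrsim d(x,x_B)$, that produces $(r_B/d(x,x_B))^{2Mp}$ and makes $M>\f{n}{2p}$ sufficient. Without this duality step (or some substitute giving integrable control of $b$), your remote estimate does not go through.
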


\begin{proof}
	
(a)	Let $\psi$ be a partition of unity and $\Phi$ be as in Lemma \ref{lem:finite propagation}. Setting, $\Psi(t)=t^{2m}\Phi(t)$ with $m\in \mathbb{N}$ and $m>|\alpha|/4$. Then by the spectral theory, for each $f \in \F^{\alpha, L}_{p,q}(X)\cap L^2(X)$,
	\begin{equation}
	f=c_{\Psi,\psi}\int_0^\vc(t\sL)^{2M}\Psi(t\sqrt{L})\psi(t\sL)f\f{dt}{t}
	\end{equation}
	with the integral converges in $L^2(X)$ where 
	\[
	c_{\Psi,\psi}=\Big[\int_{0}^{\vc}(s)^{2M}\Psi(s)\psi(s)\f{ds}{s}\Big]^{-1}.
	\]	
	
	For each dyadic cube $Q$ in $X$, denote by $\ell(Q)$ the sidelength of $Q$ and
	$$
	Q^+=\{(x,t): x\in Q, \ell(Q)/2<t\leq \ell(Q)\}.
	$$
	As in \cite{CF}, for each $k\in \mathbb{Z}$, we set
	$$
	O_k=\{x: (\mathcal{S}^\alpha_{c_0,q,\psi}f(x))^p>2^{k}\}=\{x: \mathcal{S}^\alpha_{c_0,q,\psi}f(x)>2^{k/p}\},
	$$
	and
	$$\mathscr{A}_k=\{Q\in \mathscr{D}: \mu(Q\cap O_k)>\mu(Q)/2\geq  \mu(Q\cap O_{k+1})\},
	$$
	where $c_0$ is a positive constant which will be fixed later, and 
	$$\mathcal{S}^\alpha_{c_0,q,\psi}f(x)=\Big(\int\int_{d(x,y)<c_0 t}(t^{-\alpha}|\psi(tL)f(y)|)^q\f{\dy dt}{tV(x,t)}\Big)^{1/q}
	$$
	and $\mathscr{D}$ is the collection of all dyadic cubes.
	
	For each $k\in \mathbb{Z}$, denote by $\{Q_k^l\}$ the maximal dyadic cubes in $\mathscr{A}_k$. It is easy to see that for each dyadic cube in $X$ there is a unique $k\in \mathbb{Z}$ so that $Q\in \mathscr{A}_k$. Therefore, we can write
	\begin{equation*}
	\begin{aligned}
	f&=\sum_{k,l}c_{\Psi,\psi} \sum\limits_{Q\su Q_k^l; Q\in \mathscr{A}_k}\int\int_{Q^+}t^{2M}L^M K_{\Psi(t\sqrt{L})}(x,y)\psi(t\sL)f(y)\dy\f{dt}{t}\\
	&=\sum_{k,l}\lambda_{k,l}a_{k,l}
	\end{aligned}
	\end{equation*}
	where $a_{k,l}=L^Mb_{k,l}$,
	$$
	b_{k,l}=\f{c_{\Psi,\psi}}{\lambda_{k,l}}\sum\limits_{Q\su Q_k^l; Q\in \mathscr{A}_k}\int\int_{Q^+}t^{2M}K_{\Phi(t\sqrt{L})}(x,y)\psi(t\sL)f(y)\dy\f{dt}{t}
	$$
	and
	$$
	\lambda_{k,l}= \mu(Q_k^l)^{\f{1}{p}-\f{1}{q}}\Big(\sum\limits_{Q\su Q_k^l; Q\in \mathscr{A}_k} (t^{-\alpha}|\psi(t\sL)f(y)|)^q\dy\f{dt}{t}\Big)^{1/q}.
	$$
	Note  that for $j=0,1,\ldots, M$ we can write
	$$
	L^jb_{k,l}=\f{c_{\Psi,\psi}}{\lambda_{k,l}}\sum\limits_{Q\su Q_k^l; Q\in \mathscr{A}_k}\int\int_{Q^+}t^{2(M-j)}K_{(t^2L)^j\Psi(t\sqrt{L})}(x,y)\psi(t\sL)f(y)\dy\f{dt}{t}.
	$$
	Moreover, due to $y\in Q\su Q_k^l$ and Lemma \ref{lem:finite propagation}, {\rm supp}$L^jb_{k,l}\su 3Q_k^l$ for all $j=0,1,\ldots, M$.
	Furthermore, for any $h\in \F^{-\alpha, L}_{q',q'}(X)$ supported in $3Q_k^l$ with norm $\|h\|_{\F^{-\alpha, L}_{q',q'}(X)}=1$, we have
	\begin{equation*}
	\begin{aligned}
	\Big|\int &(\ell(Q_k^l)^2L)^jb_{k,l}(x)h(x)\dx\Big|\\
	&= \f{c_{\Psi,\psi}}{\lambda_{k,l}}\Big|\sum\limits_{Q\su Q_k^l; Q\in \mathscr{A}_k}\int\int_{Q^+}t^{2M}K_{(t^2L)^j\Psi(t\sqrt{L})}(x,y)\psi(t\sL)f(y)\dy\f{dt}{t} h(x)\dx  \Big|\\
	&\leq C\f{\ell(Q_k^l)^{2M}}{\lambda_{k,l}}\Big[\sum\limits_{Q\su Q_k^l; Q\in \mathscr{A}_k}\int\int_{Q^+}(t^{-\alpha}|\psi(t\sL)f(y)|)^q\dy\f{dt}{t} \Big]^{1/q}\\
	&~~~~~~~~~~\times\Big[\int_{\mathbb{R}^{n+1}_+}(t^{\alpha}|(t^2L)^j\Psi(t\sqrt{L})h(y)|)^{q'}\dy\f{dt}{t}\Big]^{1/q'} .\\
	\end{aligned}
	\end{equation*}
	Note that since $\Phi(0)=1$, $\xi^{2j}\Psi(\xi)\neq 0$ for $\epsilon/2<|\xi|<2\epsilon$ for some $\epsilon>0$. Hence, by Proposition \ref{prop-duality} and Remark \ref{rem0}, we have
	\[
	\Big[\int_{\mathbb{R}^{n+1}_+}(t^{\alpha}|(t^2L)^j\Psi(t\sqrt{L})h(y)|)^{q'}\dy\f{dt}{t}\Big]^{1/q'}\lesi \|h\|_{\F^{-\alpha, L}_{q',q'}(X)}.
	\]
	Therefore,
	\begin{equation*}
	\begin{aligned}
	\Big|\int (\ell(Q_k^l)^2L)^jb_{k,l}(x)h(x)\dx\Big|
	&\leq C\f{\ell(Q_k^l)^{2M}}{\lambda_{k,l}}\Big[\sum\limits_{Q\su Q_k^l; Q\in \mathscr{A}_k}\int\int_{Q^+}(t^{-\alpha}|\psi(t\sL)f(y)|)^qdy\f{dt}{t} \Big]^{1/q}\\
	&\leq C\ell(Q_k^l)^{2M}\mu(Q_{k}^l)^{1/q-1/p}.
	\end{aligned}
	\end{equation*}
	Therefore, for all $j=0,1,\ldots, M$,
	$$
	\|(\ell(Q_k^l)^2L)^jb_{k,l}\|_{\F^{\alpha,L}_{q,q}(X)}\leq C\ell(Q_k^l)^{2M}\mu(Q_{k}^l)^{1/q-1/p}
	$$
	and hence $b_{k,l}$'s are $(L, M, \alpha, p, q)$-atoms up to a normalization by a multiplicative constant.\\
	
	To complete our proof, we need to check that $\sum_{k,l}\lambda_{k,l}^p\leq C\|f\|^p_{\F^{\alpha,L}_{p,q}(X)}$. To do this, denote by $\mathcal M$ the Hardy--Littlewood maximal function. For each $k$, we define 
	\[
	O_k^*=\{x: \mathcal M(\chi_{O_k})(x)>1/2\}.
	\]
	From the weak type of $(1,1)$ of the maximal function $\mathcal M$ we have $\mu(O_k^*)\lesi \mu(O_k)$. Moreover, from the definition of $\mathscr A_k$, we have $Q\subset O_k^*$ as $Q\in \mathscr A_k$. Note that there exists $c_0>1$ so that for any $x\in Q$ and $(y,t)\in Q^+$, $d(x,y)\le c_0t$. For these reasons, for every $k$, we have
	\[
	\begin{aligned}
	\sum_{l}  \sum\limits_{Q\su Q_k^l; Q\in \mathscr{A}_k}&\int\int_{Q^+}(t^{-\alpha}|\psi(t\sL)f(y)|)^q\dy \f{dt}{t}\\
	&\le \sum_{l}  \sum\limits_{Q\su Q_k^l; Q\in \mathscr{A}_k}\int_X\int_{Q^+}\chi_{O_k^*\backslash O_{k+1}}(x)\chi\Big(\f{d(\cdot,y)}{c_0t}\Big)(x)(t^{-\alpha}|\psi(t\sL)f(y)|)^q\f{\dx}{\mu(Q)}\dy \f{dt}{t}\\
	&\sim \sum_{l}  \sum\limits_{Q\su Q_k^l; Q\in \mathscr{A}_k}\int_{Q^+}\int_X\chi_{O_k^*\backslash O_{k+1}}(x)\chi\Big(\f{d(\cdot,y)}{c_0t}\Big)(x)(t^{-\alpha}|\psi(t\sL)f(y)|)^q\f{\dx}{V(x,t)} \dy\f{dt}{t}\\
	&\lesi \int_{O_k^*\backslash O_{k+1}} \mathcal{S}^\alpha_{c_0,q,\psi}f(x)\dx \lesi 2^{kq/p}\mu (O_k^*).
	\end{aligned}
	\] 
	This, along with the fact that $\mu(O_k^*)\lesi \mu(O_k)$, implies that
	\begin{equation}\label{eq1}
	\sum_{l}  \sum\limits_{Q\su Q_k^l; Q\in \mathscr{A}_k}\int\int_{Q^+}(t^{-\alpha}|\psi(t\sL)f(y)|)^q\dy \f{dt}{t} \leq 2^{qk/p}w(O_k)
	\end{equation}
	for every $k$.
	
	With this estimate on hand, applying H\"older inequality and (\ref{eq1}), one has
	\begin{equation*}
	\begin{aligned}
	\sum_{k,l}\lambda_{k,l}^p&=\sum_{k,l}\mu(Q_k^l)^{1-p/q}\Big(\sum\limits_{Q\su Q_k^l; Q\in \mathscr{A}_k}\int\int_{Q^+} |\psi(t\sL)f(y)|^qdy\f{dt}{t}\Big)^{p/q}\\
	&=\sum_k\Big[\sum_l \Big(\mu(Q_k^l)^{1-\f{p}{q}}\Big)^{\f{q}{q-p}}\Big]^{\f{q-p}{q}}\Big({\sum_l \sum\limits_{Q\su Q_k^l; Q\in \mathscr{A}_k}\int\int_{Q^+} (t^{-\alpha}|\psi(t\sL)f(y)|)^qdy\f{dt}{t}}\Big)^{p/q}\\
	&\leq \sum_k \mu(O_k)^{\f{q-p}{q}}2^k\mu(O_k)^{p/q}\\
	&\leq \sum_k 2^k\mu(O_k)\\
	&\leq C\|(\mathcal{S}^\alpha_{c_0,q,\psi}f)^p\|_{1} \sim \|f\|^p_{\F^{\alpha,L}_{p,q}}
	\end{aligned}
	\end{equation*}
	where in the last inequality we used Propositions \ref{pro-change the angle} and Proposition \ref{prop-square function characterization}.

	(b) Conversely, it suffices that prove that there exists $C>0$ such that 
	\begin{equation}\label{eq1-proof new atomic decomposition}
	\|a\|_{\F^{\alpha, L}_{p,q}(X)}\le C
	\end{equation}
	for each $(L,M,\alpha, p,q)$ atom $a$ associated to some ball $B\subset X$.
	
	Let $\Phi$ be a function as in Lemma \ref{lem:finite propagation} and $\Psi(\xi):=|\xi|^{2m}\Phi(\xi)$ with $m>|\alpha|/4$ as in (a) so that $\Psi(\xi)\ne 0$ on $\{\xi: \epsilon/2\le |\xi|\le 2\epsilon\}$ for some $\epsilon>0$. By Proposition \ref{prop4.1-heat kernel} and Remark \ref{rem0}, we use \eqref{eq1-proof new atomic decomposition}  to prove that 
	\begin{equation}\label{eq2-proof atomic decom}
	\Big\|\Big(\int_0^\vc(t^{-\alpha}|\Psi(t\sL)a|)^q\f{dt}{t}\Big)^{1/q}\Big\|_{p}\le C.
	\end{equation}
	
	To do this, we first note that from Lemma \ref{lem:finite propagation},
	\begin{equation}\label{eq-finite propagation epsilon=1}
	K_{\Psi(t\sL)}(\cdot,\cdot)\subset  \{(x,y)\in X\times X:
	d(x,y)\leq t\}.
	\end{equation}
	
	We then write
	\[
	\begin{aligned}
	\Big\|\Big(\int_0^\vc(t^{-\alpha}|\Psi(t\sL)a|)^q\f{dt}{t}\Big)^{1/q}\Big\|_{p}&\le \Big\|\Big(\int_0^\vc(t^{-\alpha}|\Psi(t\sL)a|)^q\f{dt}{t}\Big)^{1/q}\Big\|_{L^p(8B)}\\
	& \ \ \ \ +\Big\|\Big(\int_0^\vc(t^{-\alpha}|\Psi(t\sL)a|)^q\f{dt}{t}\Big)^{1/q}\Big\|_{L^p((8B)^c)}=:I_1 + I_2.
	\end{aligned}
	\]
	Using H\"older's inequality, we obtain that
	\[
	I_1\lesi \|a\|^p_{\F^{\alpha,L}_{q,q}(X)}V(B)^{1-p/q}\lesi 1.
	\]
	For the second term $I_2$, assume that $a=L^Mb$. We note that for $x\in (8B)^c$, by using Lemma \ref{lem1}, $\Psi(t\sL)a(x)=0$ as $t<d(x,x_B)/4$ and 
	\[
	\begin{aligned}
	|\Psi(t\sL)a(x)|=|L^M\Psi(t\sL)b(x)| &\lesi t^{-2M}\int_X K_{(t^2L)^M\Psi(t\sL)}(x,y)|b(y)|\dy\\
	&=\lesi t^{-2M}\int_X K_{(t^2L)^M\Psi^*(t\sL)}(y,x)|b(y)|\dy\\
	&\lesi t^{-2M}\|b\|_{\F^{\alpha,L}_{q,q}(X)}\|K_{(t^2L)^M\Psi^*(t\sL)}(\cdot,x)\|_{\F^{\alpha,L}_{q',q'}(X)}\\
	&\lesi \Big(\f{r_B}{t}\Big)^{2M}V(B)^{1/q-1/p}\|K_{(t^2L)^M\Psi^*(t\sL)}(\cdot,x)\|_{\F^{\alpha,L}_{q',q'}(X)}
	\end{aligned}
	\]
	as $t\ge d(x,x_B)/4$ where $\Psi^*$ is the conjugate of $\Psi$.
	
	It is easy to see that $K_{(t^2L)^M\Psi^*(t\sL)}(\cdot,x)=L^M \tilde{b}$ where $\tilde{b}=t^{2M}\Psi^*(t\sL)(x,\cdot)=t^{2M}\Psi(t\sL)(x,\cdot)$.
	
	Recalling Lemma \ref{lem:finite propagation} and the fact that $t>d(x,x_B)/4$, we have, for each $j=0,1,\ldots,M$,
	\[
	L^j \tilde{b}  \subset B(x,t)\subset B(x_B, 8t)
	\]
	and
	\[
	|L^j \tilde{b}(y)|\lesi \f{t^{2(M-j)}}{V(x,t)}\sim \f{t^{2(M-j)}}{V(x_B,t)}.
	\]
	At this stage, arguing similarly to the proof of the item (ii) in Theorem \ref{thm-spectralmultipliers} below, we can find that 
	\[
	\|K_{(t^2L)^M\Psi^*(t\sL)}(x,\cdot)\|_{\F^{\alpha,L}_{q',q'}(X)}\lesi t^{\alpha}V(Q)^{-1}\sim t^{\alpha}V(x_B,t)^{-1/q}\lesi t^{\alpha}V(B)^{-1/q}
	\]
	Hence,
	\[
	|\Psi(t\sL)a(x)|\lesi \Big(\f{r_B}{t}\Big)^{2M} t^{\alpha}V(B)^{-1/p}.
	\]
	Plugging this into the expression of $I_2$ together with the fact that $\Psi(t\sL)a(x)=0$ as $t<d(x,x_B)/4$, we obtain
	\[
	\begin{aligned}
	I_2^p&\lesi \int_{(8B)^c}\Big\{\int_{d(x,x_B)/4}^\vc  \Big(\f{r_B}{t}\Big)^{2qM}V(B)^{-q/p}\f{dt}{t}\Big\}^{p/q}\dx\\
	&\lesi \int_{(8B)^c} \f{1}{V(B)}\Big(\f{r_B}{d(x,x_B)}\Big)^{2Mp} \dx\\
	&\lesi 1 
	\end{aligned}
	\]
	as long as $M>\f{n}{2p}$.
	
	This completes our proof.
\end{proof}

\begin{rem}
	In fact, in Theorem \ref{thm-new atomic decomposition} we can prove the new atomic decomposition for $f\in \F^{\alpha,L}_{p,q}(X)$ instead of $\F^{\alpha,L}_{p,q}(X)\cap L^2(X)$. To do this, we need the Calder\'on reproducing formula in \cite{BBD} in the new space of distribution $\mathcal{S}'_\vc$. However, we do not pursue this problem.
\end{rem}
\subsection{Duality and interpolations}

The following results regarding the duality and the complex interpolation of the Triebel--Lizorkin follow directly from Proposition \ref{prop-square function characterization} and the duality and complex interpolation results for the weighted tent spaces. See \cite{HHM} for the Euclidean setting and  \cite{Amenta} for the possible extension to the spaces of homogeneous type.
\begin{prop}
	\label{prop-duality}
	Let $\alpha\in \mathbb{R}$ and $1<p,q<\vc$. The dual space $[\F^{\alpha,L}_{p,q}(X)]^*$ of the Triebel--Lizorkin space $\F^{\alpha,L}_{p,q}(X)$ is $\F^{-\alpha,L}_{p',q'}(X)$.
\end{prop}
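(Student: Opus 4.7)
My plan is to deduce the duality from the square-function characterization in Proposition~\ref{prop-square function characterization} by reducing to the known duality of the weighted tent spaces, following the standard ``lifting-and-projection'' scheme.

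\textbf{Step 1 (Tent space framework).} Introduce the weighted tent space $T^{p,q}_\alpha(X)$ consisting of measurable $F:X\times(0,\infty)\to\CC$ with finite norm $\|F\|_{T^{p,q}_\alpha}:=\|\mathcal{S}^\alpha_q F\|_p$, where $\mathcal{S}^\alpha_q$ is the Lusin functional from \eqref{lusin-function}. For $1<p,q<\infty$ the Coifman--Meyer--Stein duality, extended to spaces of homogeneous type in \cite{Amenta}, identifies $[T^{p,q}_\alpha(X)]^*$ with $T^{p',q'}_{-\alpha}(X)$ under the pairing
\[
\langle F,G\rangle_T \;=\; \int_0^\infty\!\int_X F(y,t)\,\overline{G(y,t)}\,\frac{\dy\,dt}{t}.
\]

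\textbf{Step 2 (Lifting and synthesis).} Fix a partition of unity $\psi$ and choose $\tilde\psi\in C_c^\infty(\mathbb{R})$ with $\supp\tilde\psi\subset[1/2,2]$ and $\int_0^\infty \psi(t)\tilde\psi(t)\,\frac{dt}{t}=1$. Define
\[
\pi_\psi f(y,t) := \psi(t\sL)f(y),\qquad \sigma_{\tilde\psi} F := \int_0^\infty \tilde\psi(t\sL)F(\cdot,t)\,\frac{dt}{t}.
\]
By Proposition~\ref{prop-square function characterization}, $\pi_\psi:\F^{\alpha,L}_{p,q}(X)\to T^{p,q}_\alpha(X)$ is bounded with $\|\pi_\psi f\|_{T^{p,q}_\alpha}\sim\|f\|_{\F^{\alpha,L}_{p,q}}$, and the same square-function identity applied at level $(-\alpha,p',q')$ shows $\sigma_{\tilde\psi}:T^{p,q}_\alpha(X)\to\F^{\alpha,L}_{p,q}(X)$ is bounded (dualizing tent space pairing against smooth tent functions), and the spectral Calderón reproducing formula gives $\sigma_{\tilde\psi}\circ\pi_\psi=\mathrm{Id}$ on $\F^{\alpha,L}_{p,q}(X)$.

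\textbf{Step 3 (From $\F^{-\alpha,L}_{p',q'}$ to functionals).} For $g\in\F^{-\alpha,L}_{p',q'}(X)$ define
\[
\Lambda_g(f) \;=\; \langle \pi_\psi f,\,\pi_{\tilde\psi} g\rangle_T \;=\; \int_0^\infty\!\int_X \psi(t\sL)f(y)\,\overline{\tilde\psi(t\sL)g(y)}\,\frac{\dy\,dt}{t}.
\]
The tent-space duality from Step~1 combined with the square-function characterisation of both spaces yields $|\Lambda_g(f)|\lesssim\|f\|_{\F^{\alpha,L}_{p,q}}\|g\|_{\F^{-\alpha,L}_{p',q'}}$. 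When $f,g\in L^2(X)$, a second application of Calderón's reproducing formula identifies $\Lambda_g(f)$ with the standard $L^2$-pairing, so $\Lambda_g$ is independent of the choice of $\psi,\tilde\psi$.

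\textbf{Step 4 (Representation of general functionals).} Conversely, given $\Lambda\in[\F^{\alpha,L}_{p,q}(X)]^*$, consider $\Lambda\circ\sigma_{\tilde\psi}$ on $T^{p,q}_\alpha(X)$; this is a bounded linear functional, hence by Step~1 is represented by some $G\in T^{p',q'}_{-\alpha}(X)$. Set $g:=\sigma_{\psi}G$, which lies in $\F^{-\alpha,L}_{p',q'}(X)$ with norm controlled by $\|\Lambda\|$ by Step~2 applied at the dual exponents. The Calderón reproducing formula then shows $\Lambda_g=\Lambda$ on the dense subspace $\F^{\alpha,L}_{p,q}(X)\cap L^2(X)$, and hence on all of $\F^{\alpha,L}_{p,q}(X)$.

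The main obstacle is the justification of the Calderón reproducing identities in the distributional space $\mathcal{S}'_\infty$ and the convergence of the synthesis integrals $\sigma_\psi G$, $\sigma_{\tilde\psi} F$ in the respective Triebel--Lizorkin norms; these follow from the heat-kernel estimates in Lemma~\ref{lem1} and almost orthogonality, together with the density statement in the remark following Theorem~\ref{thm2- atom TL spaces}, so no new analytic input beyond the cited tent-space theory is required.
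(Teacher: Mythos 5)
Your proposal is correct and follows essentially the same route as the paper, which simply asserts that the duality ``follows directly from Proposition \ref{prop-square function characterization} and the duality\dots for the weighted tent spaces'' (citing \cite{HHM} and \cite{Amenta}). You have merely unpacked the implicit retraction/coretraction scheme (lifting via $\pi_\psi$, synthesis via $\sigma_{\tilde\psi}$, and the Calder\'on reproducing identity) that the paper leaves to the reader.
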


\begin{prop}
	\label{prop-comple interpolation}
We have
\begin{equation}
\label{complex interpolation}
\left(\F^{\alpha_0,L}_{p_0,q_0}(X),\F^{\alpha_1,L}_{p_1,q_1}(X)\right)_\theta = \F^{\alpha,L}_{p,q}(X)
\end{equation}
for all $\alpha_0, \alpha_1 \in \mathbb{R}$, $0<p_0,p_1, q_0, q_1<\vc$, $\theta\in (0,1)$ and
\[
\alpha=(1-\theta)\alpha_0 +\theta \alpha_1, \ \ \ \f{1}{p} =\f{1-\theta}{p_0}+\f{\theta}{p_1}, \ \ \ \f{1}{q} =\f{1-\theta}{q_0}+\f{\theta}{q_1}
\] 	
where $(\cdot, \cdot)_\theta$ stands for the complex interpolation brackets.
\end{prop}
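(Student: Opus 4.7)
The plan is to reduce the complex interpolation of the Triebel--Lizorkin scale to the known complex interpolation of weighted tent spaces on $X\times(0,\infty)$ via a standard retraction--coretraction argument, using Proposition \ref{prop-square function characterization} as the bridge between the two scales. For each admissible triple $(\alpha,p,q)$, introduce the weighted tent space $T^{\alpha,p,q}(X)$ consisting of all measurable $F(x,t)$ on $X\times(0,\infty)$ with $\|\mathcal{S}^{\alpha}_{q}F\|_{p}<\vc$. The complex interpolation identity
\[
\bigl(T^{\alpha_0,p_0,q_0}(X),T^{\alpha_1,p_1,q_1}(X)\bigr)_{\theta}=T^{\alpha,p,q}(X)
\]
is proved in \cite{HHM} in the Euclidean case and in \cite{Amenta} for spaces of homogeneous type, so I will take it as a black box.

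First I would fix a partition of unity $\psi$ and choose an auxiliary $\tilde\psi\in\mathscr{S}(\mathbb{R})$ supported away from $0$ together with a Calder\'on reproducing formula
\[
f=c\int_{0}^{\vc}\tilde\psi(t\sL)\psi(t\sL)f\,\f{dt}{t},\qquad f\in\mathcal{S}'_{\vc},
\]
valid in $\mathcal{S}'_{\vc}$ (such a formula is available from the theory developed in \cite{BBD}). Define the coretraction
\[
J:f\longmapsto (Jf)(x,t):=\psi(t\sL)f(x),
\]
and the retraction
\[
\pi:F\longmapsto c\int_{0}^{\vc}\tilde\psi(t\sL)F(\cdot,t)\,\f{dt}{t},
\]
so that $\pi\circ J=\operatorname{id}$ on each $\F^{\alpha,L}_{p,q}(X)$ involved.

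Next I would verify that $J:\F^{\alpha,L}_{p,q}(X)\to T^{\alpha,p,q}(X)$ is bounded; this is exactly the content of Proposition \ref{prop-square function characterization}. The nontrivial step is the boundedness of $\pi:T^{\alpha,p,q}(X)\to\F^{\alpha,L}_{p,q}(X)$, which I would establish by tent-space atomic decomposition: representing $F$ as a sum of tent atoms, I would show that $\pi$ applied to such an atom produces a multiple of an $(L,M,p)$-atom in the sense of Definition \ref{defLmol} (using Lemma \ref{lem:finite propagation} for the support condition and the estimates of Lemma \ref{lem1} for the pointwise bounds on $L^{k}\pi(\text{atom})$), and then invoke the atomic decomposition criterion of Theorem \ref{thm2- atom TL spaces} to bound the Triebel--Lizorkin norm. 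Once both $J$ and $\pi$ are bounded on each endpoint space, the standard retraction--coretraction principle for the complex method yields
\[
\bigl(\F^{\alpha_0,L}_{p_0,q_0}(X),\F^{\alpha_1,L}_{p_1,q_1}(X)\bigr)_{\theta}=\F^{\alpha,L}_{p,q}(X),
\]
since $J\circ\pi$ is a bounded projection onto the image of $J$ inside every tent space simultaneously.

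The main obstacle is arranging the retraction $\pi$ to be bounded uniformly across both endpoints in the quasi-Banach regime where $p_i,q_i<1$ is allowed. Here duality is unavailable, so the atomic route is essentially forced, and one must choose the regularity parameter $m$ in $\tilde\psi(\xi)=\xi^{2m}\phi(\xi)$ large enough that $\pi$ of a tent atom is an $(L,M,p)$-atom for some $M$ satisfying the integrability threshold $M>\f{n}{2}+\f{1}{2}\max\{\alpha_i,\f{n}{1\wedge p_i\wedge q_i}-\alpha_i\}$ of Theorem \ref{thm2- atom TL spaces} for both $i=0,1$. This is a finite condition and can always be met by taking $m$ sufficiently large, so the argument goes through.
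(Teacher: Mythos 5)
Your top-level reduction — the square-function characterization (Proposition \ref{prop-square function characterization}) combined with the known complex interpolation of weighted tent spaces, transferred by a retraction--coretraction pair built from a Calder\'on reproducing formula — is exactly the route the paper intends, since the paper gives no detail beyond citing \cite{HHM, Amenta}. The genuine gap is in the one step you yourself flag as nontrivial: the boundedness of the retraction $\pi$ on $T^{\alpha,p,q}(X)$. First, tent-space atomic decompositions exist only for $p\le 1$, while the proposition allows arbitrary $0<p_i,q_i<\vc$; for endpoints with $p_i>1$ your argument says nothing. The uniform way to handle all exponents is to prove $\|\mathcal S^{\alpha}_{q}(\psi(t\sL)\pi F)\|_{p}\lesssim \|\mathcal S^{\alpha}_{q}F\|_{p}$ directly, using the almost-orthogonality estimates of Lemma \ref{lem1}(b),(c) for $\psi(t\sL)\tilde\psi(s\sL)$ together with the change-of-angle estimate (Proposition \ref{pro-change the angle}) and the Fefferman--Stein inequality \eqref{FSIn}; this is the same machinery underlying Proposition \ref{prop-square function characterization} and makes the atomic detour unnecessary. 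Second, even in the range $p\le 1$, what $\pi$ produces from a tent atom is not an $(L,M,p)$ atom in the sense needed for Theorem \ref{thm2- atom TL spaces}: a tent atom is normalized only by an $L^q$ average over the tent of a ball, so for $b$ with $\pi(\text{atom})=L^Mb$ you can control quantities of the type $\|L^k b\|_{\F^{\alpha,L}_{q,q}}$ but not the pointwise bound $|L^k b(x)|\le \ell(Q)^{2(M-k)}V(Q)^{-1/p}$ required by condition (iii) of the first definition; what you actually obtain are the ``new'' $(L,M,\alpha,p,q)$ atoms of Theorem \ref{thm-new atomic decomposition}, whose converse direction is itself restricted to $0<p\le 1<q<\vc$ and $M>\f{n}{2p}$. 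Relatedly, Theorem \ref{thm2- atom TL spaces} takes as input a sequence-space norm over dyadic cubes at every scale, not an $\ell^p$ bound over ball-indexed atoms, so even granting the atom property you are invoking the wrong criterion.

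Two further points you should address before the retract lemma can be applied: $\pi F$ must be shown to converge in $\mathcal{S}'_\vc$ for a general element $F$ of the tent space (not only for $F=Jf$), and the complex method must be given a meaning on these quasi-Banach couples (the classical retraction--coretraction principle is stated for Banach couples; in the range $p_i\wedge q_i\le 1$ one needs the quasi-Banach version, e.g.\ via analytic convexity, which is implicit in \cite{Amenta}). Neither of these is fatal, but as written the retraction step does not cover the full stated range of exponents, which is precisely where the proposition is used later in the proof of Theorem \ref{mainthm-spectralmultipliers}.
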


We now prove a real interpolation result for our new Besov and Triebel--Lizorkin spaces. We first recall the background of real interpolation method in \cite{Tr, BL}. Let $\mathcal{H}$ be a linear complex Hausdorff
space. Assume that $A_1$ and $A_2$ are two complex quasi-Banach spaces such that $A_1\subset \mathcal{H}$ and $A_2\subset \mathcal{H}$. Define $A_1 +A_2=\{a=a_1+a_2: a_i\in A_i, i=1,2\}$. For $0 < t < \infty$ and $a \in A_1 + A_2$, then the $K$-functional is defined by
$$
K(t,a):=K(t,a; A_1, A_2)=\inf\left(\|a_1\|_{A_1}+t\|a_2\|_{A_2}\right),
$$
where the infimum is taken over all representations of $a$ of the form $a = a_1 + a_2$ with $a_i\in A_i, i=1,2$.

\begin{defn}
	\label{def1-RealIn}
	Let $\theta\in (0,1)$. If $0<q\leq \vc$ then
	$$
	(A_1,A_2)_{\theta,q}=\left\{a\in A_1+A_2: \|a\|_{(A_1,A_2)_{\theta,q}}<\vc\right\},
	$$
	where
	$$
	\|a\|_{(A_1,A_2)_{\theta,q}}=\left(\int_0^\vc \left[t^{-\theta}K(t,a)\right]^q\f{dt}{t}\right)^{1/q} \ \text{when $0<q<\vc$},
	$$
	and
	$$
	\|a\|_{(A_1,A_2)_{\theta,q}}=\sup_{t>0} t^{-\theta}K(t,a) \ \text{when $q=\vc$}.
	$$
\end{defn}

We now summarize some basic properties for $(A_1,A_2)_{\theta,q}$ in \cite{Tr, BL}.

\begin{prop}
	\label{prop1-RealIn}
	Let $A_1$ and $A_2$ be two complex quasi-Banach spaces. Let $\theta\in (0,1)$ and $0<q\leq \vc$.The folllowing holds true:
	\begin{enumerate}[{\rm (i)}]
		\item $(A_1,A_2)_{\theta,q}$ is quasi-Banach space;
		\item Let $\mathcal{H}$ be a linear complex Hausdorff
		space. Assume that $B_1$ and $B_2$ are two complex quasi-Banach spaces such that $A_i\subset B_i\subset \mathcal{H}, i=1,2$. Then $(A_1,A_2)_{\theta,q}\subset (B_1,B_2)_{\theta,q}$.
	\end{enumerate}
\end{prop}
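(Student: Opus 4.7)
The statement is classical real-interpolation machinery, so my plan is to follow the standard textbook route (as in Bergh--L\"ofstr\"om or Triebel) while being careful with the quasi-Banach (rather than Banach) constants.

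For part (i), my first step would be to show that for each fixed $t>0$, the functional $a\mapsto K(t,a)$ is a quasi-seminorm on $A_1+A_2$ with the same quasi-triangle constant $c=\max\{c_1,c_2\}$ as the ambient spaces. The homogeneity $K(t,\lambda a)=|\lambda|K(t,a)$ is immediate from the definition by rescaling each decomposition. For the quasi-triangle inequality, given $a,b\in A_1+A_2$ and $\varepsilon>0$, pick near-optimal decompositions $a=a_1+a_2$ and $b=b_1+b_2$, note that $a+b=(a_1+b_1)+(a_2+b_2)$, and apply the quasi-triangle inequalities in $A_1$ and $A_2$ componentwise; letting $\varepsilon\to 0$ gives $K(t,a+b)\le c\bigl(K(t,a)+K(t,b)\bigr)$. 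Taking $L^q\bigl((0,\infty),t^{-\theta q}\,dt/t\bigr)$-norms (or the sup when $q=\infty$) and invoking the quasi-triangle inequality in $L^q$ (with constant $2^{1/q}$ or $1$) then yields the quasi-triangle inequality for $\|\cdot\|_{(A_1,A_2)_{\theta,q}}$ with some constant depending only on $c$, $c_1$, $c_2$, $q$. Positive definiteness follows because $K(1,a)$ is itself equivalent to the canonical quasi-norm on $A_1+A_2$, so $\|a\|_{(A_1,A_2)_{\theta,q}}=0$ forces $K(t,a)=0$ for a.e.\ $t$, hence (by monotonicity/continuity of $t\mapsto K(t,a)$) for all $t$, hence $a=0$.

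The core of part (i) is completeness. My plan is the standard telescoping argument: given a Cauchy sequence $\{a^{(n)}\}$ in $(A_1,A_2)_{\theta,q}$, pass to a subsequence with $\|a^{(n_{k+1})}-a^{(n_k)}\|_{(A_1,A_2)_{\theta,q}}\le 2^{-k}$. Since $K(1,\cdot)$ is dominated by $\|\cdot\|_{(A_1,A_2)_{\theta,q}}$ up to a constant, the subsequence is Cauchy in $A_1+A_2$ and converges to some $a$ there. For each $k$ select a decomposition $a^{(n_{k+1})}-a^{(n_k)}=u_k+v_k$ with $u_k\in A_1$, $v_k\in A_2$ that nearly attains $K(t,\cdot)$, and assemble these into decompositions of $a-a^{(n_k)}$; then use Fatou's lemma (applied to $K(t,\cdot)$, which is lower semicontinuous with respect to convergence in $A_1+A_2$) on the integral defining the quasi-norm to conclude $\|a-a^{(n_k)}\|_{(A_1,A_2)_{\theta,q}}\to 0$. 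The main technical obstacle is precisely this lower semicontinuity step in the quasi-Banach setting, which I would handle by noting $K(t,b)=\inf\{\|b_1\|_{A_1}+t\|b_2\|_{A_2}\}$ is an infimum of continuous functions of $b$ on $A_1+A_2$, hence upper semicontinuous, combined with a standard diagonal/extraction argument (or, if desired, the Aoki--Rolewicz theorem, which reduces everything to a $p$-norm for some $p\in(0,1]$).

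Part (ii) is the easier monotonicity assertion. The hypothesis $A_i\subset B_i$ means there exist constants $C_i$ with $\|a_i\|_{B_i}\le C_i\|a_i\|_{A_i}$ for all $a_i\in A_i$. Given $a\in A_1+A_2$ and any decomposition $a=a_1+a_2$ with $a_i\in A_i$, the same decomposition is admissible for $K(t,a;B_1,B_2)$, and
\[
\|a_1\|_{B_1}+t\|a_2\|_{B_2}\le \max\{C_1,C_2\}\bigl(\|a_1\|_{A_1}+t\|a_2\|_{A_2}\bigr).
\]
Taking the infimum over admissible decompositions in $A_1\times A_2$ gives $K(t,a;B_1,B_2)\le C\,K(t,a;A_1,A_2)$ with $C=\max\{C_1,C_2\}$. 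Plugging this pointwise estimate into the definition of the real interpolation quasi-norm (for every $0<q\le\infty$) immediately yields $\|a\|_{(B_1,B_2)_{\theta,q}}\le C\|a\|_{(A_1,A_2)_{\theta,q}}$, hence $(A_1,A_2)_{\theta,q}\subset(B_1,B_2)_{\theta,q}$ with continuous embedding. No serious obstacle arises here; it is a one-line computation once the K-functional comparison is in hand.
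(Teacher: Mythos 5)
The paper does not actually prove this proposition; it is stated as a summary of facts cited from Triebel and Bergh--L\"ofstr\"om, so there is no in-paper argument to compare against. Your outline follows the standard textbook route and is correct in substance. Part (ii) is exactly the expected one-line $K$-functional comparison.

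For part (i), two points in the completeness argument need tightening. First, the Aoki--Rolewicz reduction should not be presented as optional: after picking a subsequence with $\|a^{(n_{k+1})}-a^{(n_k)}\|_{(A_1,A_2)_{\theta,q}}\le 2^{-k}$, you still need to know that the telescoping series converges in $A_1+A_2$, and in a raw quasi-normed space $\sum_k 2^{-k}<\infty$ does not control $\sum_k x_k$ (the quasi-triangle constant compounds geometrically). Passing to an equivalent $r$-norm on $A_1+A_2$, so that $\|\sum x_k\|'^r\le\sum\|x_k\|'^r$, is what makes the tail Cauchy, together with completeness of $A_1+A_2$. Second, the semicontinuity step has a direction slip: Fatou's lemma requires $K(t,\cdot)$ to be \emph{lower} semicontinuous along the approximating sequence, yet your ``infimum of continuous functions'' heuristic only yields \emph{upper} semicontinuity (and is anyway murky here, since the index set of admissible decompositions varies with the point). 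The argument is rescued by a cleaner fact you should use directly: for each fixed $t>0$, $K(t,\cdot)$ is a quasi-norm on $A_1+A_2$ equivalent (with $t$-dependent constants) to $K(1,\cdot)=\|\cdot\|_{A_1+A_2}$, hence continuous. With that in hand one can drop the near-optimal decompositions $u_k,v_k$ and their implicit measurable-in-$t$ assembly (a genuine gap as written), and instead fix $n$, send $m\to\infty$, note $K(t,a^{(n)}-a^{(m)})\to K(t,a^{(n)}-a)$ pointwise, and apply Fatou to the nonnegative integrand $t\mapsto\bigl[t^{-\theta}K(t,a^{(n)}-a^{(m)})\bigr]^q$ to get $\|a^{(n)}-a\|_{(A_1,A_2)_{\theta,q}}\le\liminf_m\|a^{(n)}-a^{(m)}\|_{(A_1,A_2)_{\theta,q}}$, which is small by the Cauchy property.
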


Our main result of this section is the following theorem.

\begin{thm}
	\label{mainthm-Interpolation}
	Let $\theta\in (0,1), s_1, s_2\in \mathbb{R}, s_1\neq s_2$ and $s=(1-\theta)s_1+\theta s_2$.
	\begin{enumerate}[{\rm (i)}]
		\item If $0< p,  q_1, q_2, q\leq \vc$ then
		\begin{equation}\label{eq-BesovInter}
		\left(\B^{s_1,L}_{p,q_1}(X),\B^{s_2,L}_{p,q_2}(X)\right)_{\theta,q}=\B^{s,L}_{p,q}(X).
		\end{equation}
		
		\item If $0<p, q_1, q_2, q< \vc$ then
		\begin{equation}\label{eq-TLInter}
		\left(\F^{s_1,L}_{p,q_1}(X),\F^{s_2,L}_{p,q_2}(X)\right)_{\theta,q}=\B^{s,L}_{p,q}(X).
		\end{equation}
		
	\end{enumerate}
\end{thm}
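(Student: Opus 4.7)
The plan is to carry out the classical retract/coretract reduction to interpolation of vector-valued sequence spaces. Fix a partition of unity $\psi$ and pick $\tilde\psi\in C_c^\infty(\mathbb R_+)$ with $\supp\tilde\psi\subset[1/4,4]$ such that $\sum_{j\in\mathbb Z}\tilde\psi_j(\lambda)\psi_j(\lambda)=1$ on $(0,\vc)$. Define
\[
T f:=(\psi_j(\sqrt L)f)_{j\in\mathbb Z},\qquad S((g_j)_j):=\sum_{j\in\mathbb Z}\tilde\psi_j(\sqrt L)g_j,
\]
and set $\ell^q_s(L^p):=\{(g_j)_j:(\sum_j (2^{js}\|g_j\|_p)^q)^{1/q}<\vc\}$, with $L^p(\ell^q_s)$ defined analogously. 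The first step is to verify that $T$ gives an isomorphic embedding of $\B^{s_i,L}_{p,q_i}(X)$ into $\ell^{q_i}_{s_i}(L^p)$ and of $\F^{s_i,L}_{p,q_i}(X)$ into $L^p(\ell^{q_i}_{s_i})$, with $S$ a bounded left inverse satisfying $S\circ T=\mathrm{Id}$. Boundedness of $T$ is immediate from the definitions of the spaces; boundedness of $S$ follows from \eqref{eq1-lema1} together with the Fefferman--Stein-type inequality \eqref{YFSIn}; and $S\circ T=\mathrm{Id}$ is the Calder\'on reproducing formula in $\mathcal S'_\vc$.

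Once the retract/coretract structure is in place, the abstract principle that real interpolation commutes with retracts reduces the theorem to the two sequence-space identities
\begin{equation*}
(\ell^{q_1}_{s_1}(L^p),\ell^{q_2}_{s_2}(L^p))_{\theta,q}=\ell^{q}_{s}(L^p),\qquad (L^p(\ell^{q_1}_{s_1}),L^p(\ell^{q_2}_{s_2}))_{\theta,q}=\ell^{q}_{s}(L^p),
\end{equation*}
both for $s_1\neq s_2$ in the full quasi-Banach range. The first yields (i) and the second yields (ii). Observe that in both cases a Besov-type (outer $\ell^q$) space appears on the right; this collapsing phenomenon, driven by the scale gap $s_1\neq s_2$, is what forces real interpolation between Triebel--Lizorkin spaces of different smoothness to land in a Besov space.

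Both identities would be proved by direct estimation of the $K$-functional. The first is relatively elementary: for each $(g_j)_j$ and each $t>0$, the coordinatewise truncation $g_j^{(1)}=g_j\mathbf{1}_{j\le j_0(t)}$, $g_j^{(2)}=g_j-g_j^{(1)}$ with threshold $j_0(t)$ tuned to $t$ is essentially optimal, and summation of the resulting geometric series in $t$ reproduces the desired weighted $\ell^q$ norm. The second identity, which I expect to be the main obstacle, is more delicate because the inner $\ell^{q_i}$ structure precludes a purely coordinatewise splitting: one must truncate each $f_j$ on a level set whose threshold depends on $j$ so as to balance the $L^p$ mass across indices, and then verify that the optimal choice of threshold makes the total norm factor through $\|f_j\|_p$ (so that the inner $\ell^{q_i}$ is absorbed into the outer $L^p$ and the outer $\ell^q$ from interpolation survives). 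This calculation is performed in Triebel's monograph in the classical Euclidean setting and depends only on the abstract $L^p$--$\ell^q$ structure, so it transfers essentially unchanged; the quasi-Banach technicalities for $p,q_i<1$ are legitimated by Proposition \ref{prop1-RealIn}.
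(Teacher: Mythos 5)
Your reduction to vector-valued sequence spaces has a genuine gap in the quasi-Banach range, which is precisely the range the theorem is claimed for ($0<p\le\vc$, $0<q_i\le\vc$). The retract structure requires the synthesis map $S((g_j)_j)=\sum_j\tilde\psi_j(\sqrt L)g_j$ to be bounded from $\ell^{q_i}_{s_i}(L^p)$ into $\B^{s_i,L}_{p,q_i}(X)$ (resp. from $L^p(\ell^{q_i}_{s_i})$ into $\F^{s_i,L}_{p,q_i}(X)$) \emph{for arbitrary} $L^p$-entries $g_j$, and this fails when $p<1$: the single-block estimate $\|\psi_i(\sqrt L)\tilde\psi_j(\sqrt L)g_j\|_p\lesssim\|g_j\|_p$ is false for general $g_j\in L^p$, $p<1$ (already for $L=-\Delta$ on $\mathbb{R}^n$, where $\psi_i(\sqrt{-\Delta})\tilde\psi_j(\sqrt{-\Delta})$ is convolution with a fixed Schwartz function and an approximate identity $g=\epsilon^{-n}\chi_{B(0,\epsilon)}$ gives $\|g\|_p\to 0$ while the image does not). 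Your appeal to \eqref{eq1-lema1} plus \eqref{YFSIn} hides exactly this point: to invoke \eqref{YFSIn} with $r<\min\{p,q\}$ you need a Peetre-type pointwise bound $|\psi_i(\sqrt L)\tilde\psi_j(\sqrt L)g_j(x)|\lesssim \mathcal{M}_r g_j(x)$, which holds only when $g_j$ is itself spectrally localized at scale $2^{-j}$; for arbitrary $g_j$ the kernel bounds of Lemma \ref{lem1} only give control by $\mathcal{M}g_j$, i.e. $r=1$, which is useless for $p<1$ (and for the inner $\ell^{q_i}$ with $q_i<1$ in the $F$-case). So the classical ``$\B$ is a retract of $\ell^q_s(L^p)$'' picture is valid only in the Banach range; to run a transform argument below $p=1$ one needs a discrete ($\varphi$-transform/frame) retract onto sequence spaces of coefficients, which is not set up in this paper. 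A secondary point: your second sequence-space identity $(L^p(\ell^{q_1}_{s_1}),L^p(\ell^{q_2}_{s_2}))_{\theta,q}=\ell^q_s(L^p)$ is true, but its standard proof is itself the Besov-sandwich/monotonicity trick rather than the direct $j$-dependent level-set truncation you sketch, and the claim that such a computation ``transfers essentially unchanged'' is not substantiated as written.

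For comparison, the paper's proof avoids retracts altogether and stays in the quasi-Banach range throughout: for the inclusion $\left(\B^{s_1,L}_{p,q_1},\B^{s_2,L}_{p,q_2}\right)_{\theta,q}\subset\B^{s,L}_{p,q}$ it bounds the $K$-functional from below directly by $2^{ks_1}\|\psi_k(\sL)f\|_p$ at $t=2^{k(s_1-s_2)}$; for the reverse inclusion it uses the atomic decomposition Theorems \ref{thm1- atom Besov} and \ref{thm2- atom Besov} to build near-optimal splittings $f=f_{k,1}+f_{k,2}$ by cutting the atomic sum at scale $k$ (first for $q_1\vee q_2\le q$, then in general by embedding); and part (ii) follows from part (i) via the embedding sandwich \eqref{embedding eq 1} and the monotonicity of the real method in Proposition \ref{prop1-RealIn}, exploiting that for $s_1\neq s_2$ the Besov outcome does not depend on the inner indices. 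If you want to salvage your approach, you should either restrict the sequence spaces to spectrally localized families (and then prove the interpolation identities for those subspaces, which is no longer the classical statement), or adopt the atomic/sandwich route of the paper.
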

\begin{proof}
	Without loss of generality, we may assume that $s_1<s_2$. We only give the proof for the case $0<q<\vc$, the proof for $q=\vc$ can be done similarly with minor modifications  and we omit the details.

	Let $0<q<\vc$. We will show that 
	\begin{equation}
	\label{eq1-proofInter}
	\left(\B^{s_1,L}_{p,q_1}(X),\B^{s_2,L}_{p,q_2}(X)\right)_{\theta,q}\subset \B^{s,L}_{p,q}(X).
	\end{equation}
	Suppose that $f\in \B^{s,L}_{p,q}(X)$. Then we have
	\begin{equation}
	\label{eq2-proofIn}
	\begin{aligned}
	\|f\|^q_{\left(\B^{s_1,L}_{p,q_1}(X),\B^{s_2,L}_{p,q_2}(X)\right)_{\theta,q}}&=\int_0^\vc \left[t^{-\theta}K(t,f; \B^{s_1,L}_{p,q_1},\B^{s_2,L}_{p,q_2})\right]^q\f{dt}{t}\\
	&=\sum_{k\in \mathbb{Z}}\int_{2^{k(s_1-s_2)}}^{2^{(k+1)(s_1-s_2)}} \left[t^{-\theta}K(t,f; \B^{s_1,L}_{p,q_1},\B^{s_2,L}_{p,q_2})\right]^q\f{dt}{t}\\
	&\sim \sum_{k\in \mathbb{Z}} 2^{-\theta q k (s_1-s_2)}\Big[K\left(2^{k (s_1-s_2)},f; \B^{s_1,L}_{p,q_1}(X),\B^{s_2,L}_{p,q_2}(X)\right)\Big]^q.
	\end{aligned}
	\end{equation}
	Let $\psi$ be a partition of unity. From the definition of the Besov spaces $\B^{\alpha, L}_{p,q}(X)$ we have, for each $k\in \mathbb{Z}$ and each decomposition  $f=f_1+f_2$ with $f_i\in \B^{s_i,L}_{p,q_i}(X), i=1,2$,
	$$
	\begin{aligned}
	\|f_1\|_{\B^{s_1,L}_{p,q_1}}+2^{k (s_1-s_2)}\|f_1\|_{\B^{s_2,L}_{p,q_2}}&\ge 2^{ks_1}\|\psi_k(\sL)f_1\|_p +2^{k (s_1-s_2)}2^{ks_2}\|\psi_k(\sL)f_2\|_p\\
	&=2^{ks_1}\|\psi_k(\sL)f_1\|_p +2^{k s_1 }\|\psi_k(\sL)f_2\|_p\\
	&\gtrsim 2^{ks_1}\|\psi_k(\sL)(f_1+f_2)\|_p =2^{ks_1}\|\psi_k(\sL)f\|_p
	\end{aligned}
	$$
	which implies
	\[
	\Big[K\left(2^{k (s_1-s_2)},f; \B^{s_1,L}_{p,q_1}(X),\B^{s_2,L}_{p,q_2}(X)\right)\Big]^q\gtrsim 2^{ks_1q}\|\psi_k(\sL)f\|^q_p
	\]
	Plugging this into \eqref{eq2-proofIn}, 
	\[
	\begin{aligned}
	\|f\|^q_{\left(\B^{s_1,L}_{p,q_1}(X),\B^{s_2,L}_{p,q_2}(X)\right)_{\theta,q}}
	&\gtrsim \sum_{k\in \mathbb{Z}} 2^{-\theta q k (s_1-s_2)}2^{ks_1q}\|\psi_k(\sL)f\|^q_p\\
	& = \sum_{k\in \mathbb{Z}} \Big[2^{[s_1-\theta  (s_1-s_2)]k}\|\psi_k(\sL)f\|_p\Big]^q\\
	& = \sum_{k\in \mathbb{Z}} \Big[2^{sk}\|\psi_k(\sL)f\|_p\Big]^q=:\|f\|^q_{\B^{s,L}_{p,q}(X)}.
	\end{aligned}
	\]
	This proves \eqref{eq1-proofInter}.
	
	To complete the proof, it suffices to show that
	\begin{equation}
	\label{eq3-proofInter}
	\B^{s,L}_{p,q}(X)\subset \left(\B^{s_1,L}_{p,q_1}(X),\B^{s_2,L}_{p,q_2}(X)\right)_{\theta,q}.
	\end{equation}
	
	Indeed, we temporarily assume  that $q_1\vee q_2\le  q$. For $f\in \B^{s,L}_{p,q}(X)$, by Theorem \ref{thm1- atom Besov}, there exist a sequence of $(L,M,p)$ atoms $\{a_Q\}_{Q\in \mathscr{D}_\nu, \nu\in \mathbb{Z}}$ and a sequence of coefficients  $\{s_Q\}_{Q\in \mathscr{D}_\nu, \nu\in\mathbb{Z}}$ so that
	\begin{equation}\label{eq4-proofInter}
	f=\sum_{\nu\in\mathbb{Z}}\sum_{Q\in \mathscr{D}_\nu}s_Qm_Q \ \ \text{in $\SLvc'$},
	\end{equation}
	and
	\begin{equation}\label{eq5-proofInter}
	\Big[\sum_{\nu\in\mathbb{Z}}2^{\nu sq}\Big(\sum_{Q\in \mathscr{D}_\nu}|s_Q|^p\Big)^{q/p}\Big]^{1/q}\lesi \|f\|_{\B^{s,L}_{p,q}}.
	\end{equation}

	For each $k\in \mathbb{Z}$ we define
	$$
	f_{k,1} =\sum_{\nu\geq k}\sum_{Q\in \mathscr{D}_\nu}s_Qm_Q, \ \ \text{and} 
	\  \ f_{k,2} =\sum_{\nu< k}\sum_{Q\in \mathscr{D}_\nu}s_Qm_Q.
	$$
	Hence, by Theorem \ref{thm2- atom Besov}, we have
	
	$$
	\|f_{k,1}\|_{\B^{s_1,L}_{p,q_1}(X)}\lesi \Big[\sum_{\nu\geq k}2^{\nu s_1q_1}\Big(\sum_{Q\in \mathscr{D}_\nu}|s_Q|^p\Big)^{q_1/p}\Big]^{1/q_1}
	$$
	and
	$$
	\|f_{k,2}\|_{\B^{s_2,L}_{p,q_2}(X)}\lesi\Big[\sum_{\nu< k}2^{\nu s_2q_2}\Big(\sum_{Q\in \mathscr{D}_\nu}|s_Q|^p\Big)^{q_2/p}\Big]^{1/q_2}.
	$$

	Recalling \eqref{eq2-proofIn}, we have
	\begin{equation}
	\label{eq6s-proofIn}
	\begin{aligned}
	\|f\|^q_{\left(\B^{s_1,L}_{p,q_1}(X),\B^{s_2,L}_{p,q_2}(X)\right)_{\theta,q}}&\sim \sum_{k\in \mathbb{Z}} 2^{-\theta q k (s_1-s_2)}\Big[K\left(2^{k (s_1-s_2)},f; \B^{s_1,L}_{p,q_1}(X),\B^{s_2,L}_{p,q_2}(X)\right)\Big]^q.
	\end{aligned}
	\end{equation}

	Therefore,
	$$
	\begin{aligned}
	\sum_{k\in \mathbb{Z}} &2^{-\theta q k (s_1-s_2)}\Big[K\left(2^{k (s_1-s_2)},f; \B^{s_1,L}_{p,q_1}(X),\B^{s_2,L}_{p,q_2}(X)\right)\Big]^q\\
	&\leq \sum_{k\in \mathbb{Z}} 2^{-\theta q k (s_1-s_2)}\left[\|f_{k,1}\|_{\B^{s_1,L}_{p,q_1}(X)} + 2^{k (s_1-s_2)}\|f_{k,2}\|_{\B^{s_2,L}_{p,q_2}(X)}\right]^q\\
	&\leq \sum_{k\in \mathbb{Z}} 2^{-\theta q k (s_1-s_2)}\Big\{\Big[\sum_{\nu\geq k}2^{\nu s_1q_1}\Big(\sum_{Q\in \mathscr{D}_\nu}|s_Q|^p\Big)^{q_1/p}\Big]^{1/q_1}\\
	& \ \ \ \  +2^{k (s_1-s_2)} \Big[\sum_{\nu< k}2^{\nu s_2q_2}\Big(\sum_{Q\in \mathscr{D}_\nu}|s_Q|^p\Big)^{q_2/p}\Big]^{1/q_2} \Big\}^q\\
	&\leq \sum_{k\in \mathbb{Z}}  \Big[\sum_{\nu\geq k}2^{-\theta q_1 k (s_1-s_2)}2^{\nu s_1q_1}\Big(\sum_{Q\in \mathscr{D}_\nu}|s_Q|^p\Big)^{q_1/p}\Big]^{q/q_1}\\
	&\ \ \ \  +\sum_{k\in \mathbb{Z}} \Big[\sum_{\nu< k}2^{k q_2(1-\theta)(s_1-s_2)}2^{\nu s_2q_2}\Big(\sum_{Q\in \mathscr{D}_\nu}|s_Q|^p\Big)^{q_2/p}\Big]^{q/q_2}=: E_1 + E_2.
	\end{aligned}
	$$
	For the term $E_1$, we have
	\[
	\begin{aligned}
	E_1&= \sum_{k\in \mathbb{Z}}  \Big[\sum_{\nu\geq k}2^{\theta q_1(s_1-s_2)(\nu-k)}2^{\nu sq_1}\Big(\sum_{Q\in \mathscr{D}_\nu}|s_Q|^p\Big)^{q_1/p}\Big]^{q/q_1}.
	\end{aligned}
	\]
	Since $\theta q_1(s_1-s_2)(\nu-k)<0$ as $\nu> k$, by H\"older's inequality and Fubini's Theorem we have
	\[
	E_1\lesi \sum_{k\in \mathbb{Z}}  \Big[2^{\nu sq}\Big(\sum_{Q\in \mathscr{D}_\nu}|s_Q|^p\Big)^{q/p}\Big]\lesi \|f\|^q_{\B^{s,L}_{p,q}(X)}.
	\]
	For the same reason, we have
	\[
	\begin{aligned}
	E_2&=\sum_{k\in \mathbb{Z}} \Big[\sum_{\nu< k}2^{q_2(1-\theta)(s_1-s_2)(k-\nu)}2^{\nu sq_2}\Big(\sum_{Q\in \mathscr{D}_\nu}|s_Q|^p\Big)^{q_2/p}\Big]^{q/q_2}\\
	&\lesi \|f\|^q_{\B^{s,L}_{p,q}(X)}.
	\end{aligned}
	\]
	
	As a consequence,
	\[
	\sum_{k\in \mathbb{Z}} 2^{-\theta q k (s_1-s_2)}\Big[K\left(2^{k (s_1-s_2)},f; \B^{s_1,L}_{p,q_1}(X),\B^{s_2,L}_{p,q_2}(X)\right)\Big]^q\lesi  \|f\|^q_{\B^{s,L}_{p,q}(X)}.
	\]
	This, along with \eqref{eq6s-proofIn}, implies \eqref{eq3-proofInter} for the case $q_1\vee q_2\le q$. 
	
	If $q_1>q$ or $q_2>q$, from the embedding of the sequence spaces  $\ell^{q}\hookrightarrow \ell^{q_1\vee q_2}$ we obtain $\B^{s, L}_{p,q}(X)\subset \B^{s,L}_{p,q_1\vee q_2}(X)$ and the proof of \eqref{eq3-proofInter} tells us that 
	\begin{equation}
	\label{eq3-proofInter}
	\B^{s,L}_{p,q_1\vee q_2}(X)\subset \left(\B^{s_1,L}_{p,q_1}(X),\B^{s_2,L}_{p,q_2}(X)\right)_{\theta,q}.
	\end{equation}
	It follows that
	\[
	\B^{s,L}_{p,q}(X)\subset \left(\B^{s_1,L}_{p,q_1}(X),\B^{s_2,L}_{p,q_2}(X)\right)_{\theta,q}
	\]
	since $\B^{s, L}_{p,q}(X)\subset \B^{s,L}_{p,q_1\vee q_2}(X)$.
	
	This completes the proof of (i).
	
	\bigskip
	
	(ii) By Minkowski's inequality  and the embedding $\ell^{p\wedge q}\subset \ell^{q}\subset \ell^{p\vee q}$ we conclude that 
	\begin{equation}
	\label{embedding eq 1}
	\B^{s,L}_{p,q\wedge p}(X) \subset \F^{s,L}_{p,q}(X)\subset \B^{s,L}_{p,p\vee q}(X)
	\end{equation} 
	for all $s\in \mathbb{R}$ and $0<p,q<\vc$.
	
	This, along with Proposition \ref{prop1-RealIn} and the interpolation result in (i), implies
	\[
	\begin{aligned}
	\B^{s,L}_{p,q}(X)&= \left(\B^{s_1,L}_{p,q_1\wedge p}(X),\B^{s_2,L}_{p,q_2\wedge p}(X)\right)_{\theta,q} \subset \left(\F^{s_1,L}_{p,q_1 }(X),\F^{s_2,L}_{p,q_2}(X)\right)_{\theta,q}\\
	&\subset \left(\B^{s_1,L}_{p,q_1\wedge p}(X),\B^{s_2,L}_{p,q_2\wedge p}(X)\right)_{\theta,q}=\B^{s,L}_{p,q}(X).
	\end{aligned}
	\]
	This completes the proof of (ii).
\end{proof}

\section{Spectral multipliers on Besov spaces $\B^{\alpha,L}_{p,q}$ and Triebel--Lizorkin spaces $\F^{\alpha,L}_{p,q}$.}
This section is devoted to the proofs of Theorem \ref{mainthm-spectralmultipliers-Miklin} and Theorem \ref{mainthm-spectralmultipliers}.
We note that the proof of Theorem \ref{mainthm-spectralmultipliers} is quite long and technical, while Theorem \ref{mainthm-spectralmultipliers-Miklin} follows
from Theorem \ref{mainthm-spectralmultipliers}. Indeed, once Theorem \ref{mainthm-spectralmultipliers} is proved, we can obtain Theorem \ref{mainthm-spectralmultipliers-Miklin}
as follows.

\begin{proof}
	[Proof of Theorem \ref{mainthm-spectralmultipliers-Miklin}:] It is known  that \eqref{eq1-mainthmsm} is  true with $\tilde q =\vc$. See Remark 1 in \cite{DOS}. So, Theorem \ref{mainthm-spectralmultipliers-Miklin} is a direct consequence of Theorem \ref{mainthm-spectralmultipliers}.
\end{proof}

We now turn to the proof of Theorem \ref{mainthm-spectralmultipliers}.  We first need  the following result.
\begin{thm}\label{thm-spectralmultipliers} 		Let $s>\f{n}{2}$ and let   $\alpha\in \mathbb{R}$ and $0<p,q<\vc$. Assume that  the condition \eqref{eq1-mainthmsm}  holds true. If $F$ is a  bounded Borel function which satisfies the  condition \eqref{smoothness condition}, then
	\begin{enumerate}[(a)]
		\item the spectral multiplier $F(\sqrt L)$ is bounded on $\F^{\alpha, L}_{p,2}(X)$ provided that $\alpha\in \mathbb{R}$, $0<p<\vc$ and $s>n(\f{1}{p\wedge 1}-\f{1}{2})$; moreover,
		\begin{equation}\label{eq1-thm1}
							\|F(\sqrt L)\|_{\F^{\alpha, L}_{p,2}(X)\to \F^{\alpha, L}_{p,2}(X)}\lesi  \|F\|_{L^\vc} +\sup_{t>0}\|\eta\, \delta_tF\|_{W^{\tilde q}_s}.
		\end{equation}
		\item the spectral multiplier $F(\sqrt L)$ is bounded on $\F^{\alpha, L}_{p,q}(X)$ provided that $\alpha\in \mathbb{R}$, $0<p,q< \vc$ and $s>\f{n}{1\wedge p\wedge q}+\f{\tilde{n}}{2}$; moreover,
				\begin{equation}\label{eq2-thm1}
				\|F(\sqrt L)\|_{\F^{\alpha, L}_{p,q}(X)\to \F^{\alpha, L}_{p,q}(X)}\lesi \|F\|_{L^\vc}+ \sup_{t>0}\|\eta\, \delta_tF\|_{W^{\tilde q}_s}.
		\end{equation}
	\end{enumerate}
\end{thm}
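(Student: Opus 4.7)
The plan is to establish Part (a) via the new atomic decomposition of Theorem \ref{thm-new atomic decomposition}, and to obtain Part (b) by a direct Littlewood--Paley argument based on Proposition \ref{prop-square function characterization}. The $L^2$ base case is immediate: since $F \in L^\vc$, the spectral theorem yields boundedness of $F(\sL)$ on $\F^{0,L}_{2,2}(X) = L^2(X)$, and the intertwining $L^{\alpha/2}F(\sL) = F(\sL)L^{\alpha/2}$ extends this to every $\F^{\alpha,L}_{2,2}(X)$.

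For Part (a) with $0 < p \le 1$, by Theorem \ref{thm-new atomic decomposition} it suffices to show
\[
\|F(\sL)a\|_{\F^{\alpha,L}_{p,2}} \lesi \mathcal{N}(F) := \|F\|_\vc + \sup_{t>0}\|\eta\, \delta_tF\|_{W^{\tilde q}_s}
\]
uniformly for all new $(L,M,\alpha,p,2)$-atoms $a = L^M b$ supported in a ball $B$ of radius $r_B$, with $M$ chosen sufficiently large. The range $1 < p < \vc$ is then covered by complex interpolation (Proposition \ref{prop-comple interpolation}) between $\F^{\alpha,L}_{p_0,2}$ for some $p_0 < 1$ and the endpoint $\F^{\alpha,L}_{2,2} = L^2$ to reach $1 < p < 2$, and duality (Proposition \ref{prop-duality}) applied to the adjoint $\overline F(\sL)$ on $\F^{-\alpha,L}_{p',2}$ to reach $2 < p < \vc$, both keeping $\alpha$ and the second index fixed.

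The technical core of the atomic estimate is a dyadic decomposition of $F$ adapted to the scale $r_B$: fix $\chi \in C_c^\vc(\mathbb{R}_+)$ with $\supp\chi \subset [1/2, 2]$ and $\sum_{\ell\in\mathbb{Z}}\chi(2^{-\ell}\cdot) = 1$ on $(0,\vc)$, and set $F_\ell(\lambda) = F(\lambda)\chi(2^{-\ell}r_B\lambda)$, so that $\|\delta_{2^\ell/r_B}F_\ell\|_{W^{\tilde q}_s} \lesi \mathcal{N}(F)$ uniformly in $\ell$ and $r_B$. The restriction hypothesis \eqref{eq1-mainthmsm} combined with the Sobolev regularity of $F_\ell$, by the standard argument of Duong--Ouhabaz--Sikora, yields the weighted Plancherel estimate
\[
\int_X |K_{F_\ell(\sL)}(x,y)|^2 \bigl(1 + 2^\ell r_B^{-1} d(x,y)\bigr)^{2s}\dx \lesi \frac{\mathcal{N}(F)^2}{V(y, 2^{-\ell}r_B)}.
\]
Splitting $X$ into the annuli $A_k = \{2^k r_B \le d(\cdot,x_B) < 2^{k+1}r_B\}$ and applying Cauchy--Schwarz on each $A_k$, one uses the identity $a = L^M b$ (absorbing the atom's size bound and extracting a gain $2^{-2M\ell}$ when $\ell \ge 0$) to dominate the $L^2(A_k)$-norm of $F_\ell(\sL)a$; summation over $\ell$ is controlled by the geometric factor $(1+2^\ell)^{-s}$ against the volume loss $V(y, 2^{-\ell}r_B)^{-1/2}$, which is summable precisely under $s > n(1/(p\wedge 1) - 1/2)$, while the $p \le 1$ quasi-norm sums the annular pieces through the Hardy--Littlewood maximal bound. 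Hence $F(\sL)a$ is dominated by a constant multiple of $\mathcal{N}(F)$ times a single new $(L,M,\alpha,p,2)$-atom (plus a summable tail), as required.

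For Part (b), apply Proposition \ref{prop-square function characterization}: $\|g\|_{\F^{\alpha,L}_{p,q}} \sim \|(\sum_j (2^{j\alpha}|\psi_j(\sL)g|)^q)^{1/q}\|_p$. Writing $\psi_j(\sL)F(\sL) = G_j(2^{-j}\sL)$ with $G_j(\lambda) = \psi(\lambda)F(2^j\lambda)$, one has $\supp G_j \subset [1/2,2]$ and $\|G_j\|_{W^{\tilde q}_s} \lesi \mathcal{N}(F)$. From \eqref{eq1-mainthmsm} applied to $G_j$, combined with Lemma \ref{lem1} and \eqref{doubling3}, one deduces the pointwise kernel bound
\[
|K_{G_j(2^{-j}\sL)}(x,y)| \lesi \frac{\mathcal{N}(F)}{V(x,2^{-j})}\bigl(1+2^j d(x,y)\bigr)^{-(s-\tilde n/2)},
\]
so Lemma \ref{lem-elementary}(b) yields $|\psi_j(\sL)F(\sL)f(x)| \lesi \mathcal{N}(F)\,\mathcal{M}_r(\psi_j(\sL)f)(x)$ for some $r < 1\wedge p \wedge q$ provided $s - \tilde n/2 > n/(1\wedge p\wedge q)$. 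The vector-valued Fefferman--Stein inequality \eqref{FSIn} then closes the bound on $\F^{\alpha,L}_{p,q}$. The main obstacle throughout is the precise tracking of decay exponents in the two kernel estimates above, where the exact thresholds $s > n(1/(p\wedge 1)-1/2)$ and $s > n/(1\wedge p\wedge q) + \tilde n/2$ emerge from balancing the volume factor against the weighted Plancherel gain; once those estimates are in hand, the remaining steps are the standard bookkeeping afforded by the atomic decomposition and Littlewood--Paley characterization developed earlier in the paper.
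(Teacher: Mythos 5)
Your overall plan for Part (a) is sound but routes through the quasi-norm ranges in a different order than the paper: you do atoms for $0<p\le 1$, then complex-interpolate against $\F^{\alpha,L}_{2,2}$ to fill $1<p<2$ and dualize for $p>2$. The paper instead proves $p\ge 2$ directly with a short square-function argument (pair $|\psi(t\sL)F(\sL)f|$ against the weighted-Plancherel bound of Lemma \ref{lem-Lp sm} to dominate it by $\mathcal G_{s',2}(\psi(t\sL)f)$, then invoke Proposition \ref{prop-square function characterization}), dualizes down to $1<p<2$, and then turns to atoms for $p\le 1$. Both orderings are legitimate; the paper's treatment of $p\ge 2$ is arguably more elementary since it avoids the interpolation step, and it also reads more cleanly because it isolates the case where only $s>n/2$ is required before the atomic machinery enters. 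Your atom estimate sketch (dyadic decomposition of $F$ at scale $r_B$, Duong--Ouhabaz--Sikora weighted Plancherel, annular splitting) is essentially the standard route and differs from the paper's $E_1/E_2$ decomposition of the $t$-integral only in organization, not in substance.

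Part (b) contains a genuine gap. You attempt a direct Littlewood--Paley bound, asserting that the kernel estimate $|K_{\psi_j(\sL)F(\sL)}(x,y)|\lesi V(x,2^{-j})^{-1}(1+2^jd(x,y))^{-(s-\tilde n/2)}$ together with Lemma \ref{lem-elementary}(b) yields $|\psi_j(\sL)F(\sL)f(x)|\lesi \mathcal{M}_r(\psi_j(\sL)f)(x)$ for some $r<1\wedge p\wedge q$. But Lemma \ref{lem-elementary}(b) only delivers the Hardy--Littlewood maximal function $\mathcal{M}$, i.e.\ $r=1$, and that is useless for the Fefferman--Stein inequality \eqref{FSIn} when $p\wedge q<1$. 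Passing from a kernel decay to a bound by $\mathcal{M}_r$ with $r<1$ requires an additional mechanism — typically a Peetre maximal function argument that exploits the band-limitedness (in the spectral sense) of $\psi_j(\sL)f$ — which you neither invoke nor prove. The paper sidesteps this entirely by working through the atomic decomposition of Theorem \ref{thm1- atom TL spaces} together with Lemma \ref{lem1- thm2 atom Besov}: there the target of $\mathcal{M}_r$ is a sum $\sum_Q |s_Q|\chi_Q$ of indicator functions of dyadic cubes at a fixed scale, and the needed $\mathcal{M}_r$ bound with $r<1$ drops out of the pointwise decay $(1+d(x,x_Q)/2^{-\eta})^{-N}$ from the cube centers rather than from a general maximal-function-domination lemma. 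As written, your Part (b) argument is not complete; you would need to either supply the Peetre-type estimate or switch to the atomic route as the paper does.
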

\bigskip

We note that the smoothness condition $s>n(\f{1}{p\wedge 1}-\f{1}{2})+\f{\tilde{n}}{2}$ in Theorem \ref{thm-spectralmultipliers} (b) is not as sharp as we expect, i.e. $s>n(\f{1}{1\wedge p \wedge q}-\f{1}{2})$. However, we will be able to obtain the sharp estimate for $s$ by an interpolation argument.

\medskip

Before coming to the proof of Theorem \ref{thm-spectralmultipliers}, we prove the following technical lemmas regarding the kernel estimates for spectral multipliers with compact supports.
\begin{lem}\label{lem2-sm}
	Assume that $L$ satisfies (\ref{eq1-mainthmsm}). Then for any $x,y\in X$,
	\begin{equation}
	\label{eq-lem2-sm}
	|K_{F(\sL)}(x, y)|\leq \f{C}{\sqrt{V(x,R^{-1})V(y,R^{-1})}}\|\delta_R F\|_{\tilde q}
	\end{equation}
	for all bounded Borel function $F$ with ${\rm supp}\, \subset [0,R]$.
\end{lem}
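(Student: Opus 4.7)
The plan is to exploit the hypothesis \eqref{eq1-mainthmsm} together with the Gaussian heat kernel bound \eqref{GE} by ``pre-multiplying'' $F$ by a Gaussian factor and then factoring the operator through the heat semigroup. Concretely, set
\[
G(\lambda)=F(\lambda)\,e^{\lambda^2/R^2},
\]
so that $F(\sqrt L)=G(\sqrt L)\,e^{-L/R^2}$ by the spectral theorem. Since $\supp F\subset[0,R]$, we have $\supp G\subset[0,R]$ and $|G(\lambda)|\le e\,|F(\lambda)|$, hence $\|\delta_R G\|_{\tilde q}\lesssim \|\delta_R F\|_{\tilde q}$. In particular $G$ still satisfies the hypothesis \eqref{eq1-mainthmsm}.

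Next, I will write the kernel of the composition as
\[
K_{F(\sqrt L)}(x,y)=\int_X K_{G(\sqrt L)}(x,z)\,p_{1/R^2}(z,y)\,d\mu(z)
\]
and apply Cauchy--Schwarz to obtain
\[
|K_{F(\sqrt L)}(x,y)|^2\le \Big(\int_X |K_{G(\sqrt L)}(x,z)|^2\,d\mu(z)\Big)\Big(\int_X |p_{1/R^2}(z,y)|^2\,d\mu(z)\Big).
\]
The second factor is controlled by the semigroup identity $\int_X p_{1/R^2}(z,y)^2\,d\mu(z)=p_{2/R^2}(y,y)\lesssim 1/V(y,R^{-1})$, a direct consequence of \eqref{GE} and the doubling property.

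For the first factor, the hypothesis \eqref{eq1-mainthmsm} gives an $L^2$ bound integrating in the first variable, whereas I need the integral in the second variable. Here I use self-adjointness of $L$: the kernel satisfies $K_{G(\sqrt L)}(x,z)=\overline{K_{\overline G(\sqrt L)}(z,x)}$, so applying \eqref{eq1-mainthmsm} to $\overline G$ (which has the same $\tilde q$-norm as $G$) yields
\[
\int_X |K_{G(\sqrt L)}(x,z)|^2\,d\mu(z)\lesssim \f{1}{V(x,R^{-1})}\|\delta_R G\|_{\tilde q}^2\lesssim \f{1}{V(x,R^{-1})}\|\delta_R F\|_{\tilde q}^2.
\]
Multiplying the two estimates and taking square roots gives the desired pointwise bound.

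The main step to be careful about is the passage from the ``integrate in $x$'' form of \eqref{eq1-mainthmsm} to the ``integrate in $z$'' form, which relies on the self-adjointness of $L$ (and correctly handling the complex conjugate of $G$). Everything else is routine: the Gaussian factor $e^{\lambda^2/R^2}$ is harmless on the support of $F$, and the $L^2$ norm of the heat kernel is a standard consequence of \eqref{GE}.
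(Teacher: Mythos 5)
Your proof is correct and uses essentially the same idea as the paper: factor out the heat semigroup, write $F(\sqrt L) = G(\sqrt L)\,e^{-L/R^2}$ with $G(\lambda)=e^{\lambda^2/R^2}F(\lambda)$, and apply Cauchy--Schwarz to the resulting kernel convolution, using \eqref{eq1-mainthmsm} for the multiplier factor and \eqref{GE} for the heat-kernel factor. The only minor difference is that the paper factors in the opposite order, $F(\sqrt L)=e^{-L/R^2}G(\sqrt L)$, so that the integral $\int_X|K_{G(\sqrt L)}(z,y)|^2\,d\mu(z)$ is already in the first-variable form of \eqref{eq1-mainthmsm} and no self-adjointness/conjugate argument is needed; your extra step is correct but can be avoided by swapping the two commuting factors.
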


\begin{proof} We write $F(\lambda)=e^{-\f{\lambda^2}{R^2}}G(\lambda)$ and hence $G(\lambda)=e^{\f{\lambda^2}{R^2}}F(\lambda)$. Since supp\, $F\subset [0,R]$, supp\, $G\subset [0, R]$ and in addition $\|\delta_R F\|_{\tilde q}\approx \|\delta_R G\|_{\tilde q}$. Besides, $F(\sL) = e^{-\f{1}{R^2}}G(\sqrt{\lambda})$ which implies that
	$$
	K_{F(\sL)}(x, y)= \int_X p_{R^{-2}}(x,z)K_{G(\sL)}(z,y)dz.
	$$
	Using H\"older's inequality,
	\[
	|K_{F(\sL)}(x, y)|\le \Big[\int_X|p_{R^{-2}}(x,z)|^2dz\Big]^{1/2}\Big[ \int_X|K_{G(\sL)}(z,y)|^2dz\Big]^{1/2}
	\]
	Using  \eqref{eq1-mainthmsm} and Lemma \ref{lem-elementary}, we obtain
	$$
	\begin{aligned}
	|K_{F(\sL)}(x, y)|&\lesi \f{\|\delta_R G\|_{\tilde q}}{V(x,R^{-1})^{\f{1}{2}}V(y,R^{-1})^{\f{1}{2}}}\\
	&\sim \f{\|\delta_R F\|_{\tilde q}}{V(x,R^{-1})^{\f{1}{2}}V(y,R^{-1})^{\f{1}{2}}}.
	\end{aligned}
	$$
	This completes our proof.
\end{proof}

\begin{lem}\label{lem4-sm}
	Let $R,s>0$. Then for any $\epsilon>0$ there exists a constant $C=C(s,\epsilon)$ such that
	\begin{equation}\label{eq1-lem4}
	|K_{F(\sL)}(x,y)| \leq  C\f{(1+Rd(x,y))^{-s}}{\sqrt{V(x,R^{-1})V(y,R^{-1})}}\|\delta_RF\|_{W^{\tilde q}_{s+\epsilon}}
	\end{equation}
	for any bounded Borel function $F$ supported in $[R/4, R]$ and for all $x,y\in X$.
\end{lem}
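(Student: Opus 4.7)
By scaling (replacing $L$ by $R^{-2}L$ and $F$ by $\delta_R F$) I may assume $R=1$, so that $F$ is supported in $[1/4,1]$. For $d(x,y)\le 1$ the factor $(1+d(x,y))^{-s}$ is of order one and the claim follows immediately from Lemma \ref{lem2-sm} together with the Sobolev embedding $\|F\|_{\tilde q}\lesi \|F\|_{W^{\tilde q}_{s+\epsilon}}$ available for functions supported in a fixed bounded interval. Thus I focus on $d:=d(x,y)>1$.

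The strategy is a cutoff-and-Cauchy--Schwarz argument. Fix an even $\varphi\in C_c^\vc(\mathbb R)$ with $\varphi\equiv 1$ on $[1/4,1]$ and $\supp\varphi\subset[1/8,2]$ on the positive half-line. Since $F=F\varphi$ as scalar functions, the functional calculus gives
\[
K_{F(\sL)}(x,y)=\int_X K_{F(\sL)}(x,z)\,K_{\varphi(\sL)}(z,y)\,\dz,
\]
and Cauchy--Schwarz with the weight $(1+d(x,z))^{2s}$ yields $|K_{F(\sL)}(x,y)|^2\le A(x)\cdot B(x,y)$, where
\[
A(x)=\int_X|K_{F(\sL)}(x,z)|^2(1+d(x,z))^{2s}\dz,\qquad B(x,y)=\int_X(1+d(x,z))^{-2s}|K_{\varphi(\sL)}(z,y)|^2\dz.
\]
By Lemma \ref{lem1}(a), $|K_{\varphi(\sL)}(z,y)|\lesi V(y,1)^{-1}(1+d(z,y))^{-N}$ for any $N$. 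Splitting the $z$-integration in $B$ into $\{d(z,y)\le d/2\}$ (on which $d(x,z)\ge d/2$, so the weight supplies the decay) and its complement (on which $d(z,y)\ge d/2$, so the Schwartz decay of the kernel does the work), and choosing $N$ large enough, yields $B(x,y)\lesi(1+d)^{-2s}V(y,1)^{-1}$.

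The heart of the proof is the weighted Plancherel bound
\[
A(x)\lesi V(x,1)^{-1}\|F\|_{W^{\tilde q}_{s+\epsilon}}^{2}.
\]
I would establish it via the Fourier representation $F(\sL)=(2\pi)^{-1}\int_{\mathbb R}\widehat{\tilde F}(\tau)\cos(\tau\sL)\,d\tau$ (with $\tilde F$ the even extension of $F$) combined with a smooth dyadic partition of unity $\{\eta_k\}_{k\ge 0}$ in the dual variable $\tau$. Setting $F_k(\sL)=(2\pi)^{-1}\int\widehat{\tilde F}(\tau)\eta_k(\tau)\cos(\tau\sL)\,d\tau$, the finite speed of propagation for the wave equation (which gives $K_{\cos(\tau\sL)}(x,\cdot)\equiv 0$ off $B(x,|\tau|)$) forces $K_{F_k(\sL)}(x,\cdot)$ to be supported in $B(x,2^{k+1})$, so the $(1+d(x,z))^{2s}$ weight costs only $2^{2sk}$ on the $k$-th piece. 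Each $\|K_{F_k(\sL)}(x,\cdot)\|_2$ is in turn controlled via Lemma \ref{lem2-sm}---applied after multiplying by a fixed cutoff equal to $1$ on $\supp F$ to restore compact spectral support---together with the Littlewood--Paley-type bound $\|F_k\|_{\tilde q}\lesi 2^{-(s+\epsilon)k}\|F\|_{W^{\tilde q}_{s+\epsilon}}$. The resulting geometric sum $\sum_k 2^{2sk}\cdot 2^{-2(s+\epsilon)k}=\sum_k 2^{-2\epsilon k}$ converges, giving the required weighted Plancherel bound.

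Combining the bounds on $A$ and $B$ and undoing the rescaling produces the stated pointwise estimate. The main obstacle is the weighted Plancherel step: the Fourier-dyadic pieces $F_k$ are not compactly supported in $\lambda$, so Lemma \ref{lem2-sm} cannot be applied directly, and one has to transfer the Sobolev regularity of $F$ through its Littlewood--Paley decomposition while carefully tracking the auxiliary cutoff---the $\epsilon>0$ margin in the hypothesis is exactly what powers the summability across dyadic scales.
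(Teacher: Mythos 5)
Your route is genuinely different from the paper's. The paper proves Lemma \ref{lem4-sm} directly: writing $F(\sL)=G(L/R^2)e^{-L/R^2}$ with $G(\lambda)=[\delta_RF](\sqrt{\lambda})e^{\lambda}$, it represents the kernel by Fourier inversion as $\f{1}{2\pi}\int_{\mathbb{R}}\widehat{G}(\tau)\,p_{(1-i\tau)/R^2}(x,y)\,d\tau$, invokes the complex-time Gaussian bound $|p_{(1-i\tau)/R^2}(x,y)|\lesi (V(x,R^{-1})V(y,R^{-1}))^{-1/2}\exp\big(-R^2d(x,y)^2/(1+\tau^2)\big)$ from \cite{O}, and obtains \eqref{eq2-lem4} with the norm $\|\delta_RF\|_{W^{\tilde q}_{s+\epsilon+1/2}}$; the extra half derivative is then removed by interpolating with Lemma \ref{lem2-sm}. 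You instead reduce the pointwise bound to a weighted $L^2$ (Plancherel) bound: since $F=F\varphi$, Cauchy--Schwarz against the weight $(1+d(x,z))^{2s}$ gives $|K_{F(\sL)}(x,y)|^2\le A(x)B(x,y)$, and your treatment of $B(x,y)$ via Lemma \ref{lem1}(a) and Lemma \ref{lem-elementary} is fine (adjust the splitting threshold for the quasi-metric constant). The quantity $A(x)$ is exactly the left-hand side of Lemma \ref{lem-Lp sm}, which the paper quotes from \cite[Lemma 4.3]{DOS} and which is logically independent of Lemma \ref{lem4-sm}; if you simply invoke that estimate, your argument is complete, and it is arguably cleaner than the paper's in that it needs neither complex-time heat kernel bounds nor the interpolation step -- at the price of leaning on the nontrivial weighted Plancherel estimate rather than arguing from scratch.

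The gap is in your self-contained proof of that weighted Plancherel bound, precisely at the point you flag. After the decomposition $F=\sum_k F_k$, finite propagation speed localizes the kernel of $F_k(\sL)$, but Lemma \ref{lem2-sm} (equivalently hypothesis \eqref{eq1-mainthmsm}) applies only to multipliers with compact support in $\lambda$, which $F_k$ does not have; and multiplying by a cutoff $\phi$ with $\phi\equiv 1$ on ${\rm supp}\,F$ does \emph{not} give $F_k\phi=F_k$, since $F_k$ spills outside ${\rm supp}\,F$. So the step ``each $\|K_{F_k(\sL)}(x,\cdot)\|_2$ is controlled via Lemma \ref{lem2-sm} after multiplying by a fixed cutoff'' leaves the tail $F_k(1-\phi)$ entirely unestimated. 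The correct order is: pay the weight first, using the compact support of the kernel of the \emph{full} piece $F_k(\sL)$, so that only the unweighted norm $\|K_{F_k(\sL)}(x,\cdot)\|_2$ remains; then split $F_k=F_k\phi+F_k(1-\phi)$, apply Lemma \ref{lem2-sm} to the first term, and handle the second by a further decomposition into dyadic spectral blocks to which \eqref{eq1-mainthmsm} applies, using the rapid decay $|F_k(\lambda)|\lesi_N 2^{-kN}(1+|\lambda|)^{-N}$ away from ${\rm supp}\,F$ to absorb the volume factors and to sum in $k$ and in the blocks. This is the substantive content of \cite[Lemma 4.3]{DOS}; your sketch names the obstacle but does not carry out this step, and the Littlewood--Paley bound $\|F_k\|_{\tilde q}\lesi 2^{-(s+\epsilon)k}\|F\|_{W^{\tilde q}_{s+\epsilon}}$ alone does not substitute for it. Either cite Lemma \ref{lem-Lp sm} (as the paper does) or write out the splitting argument in full.
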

\begin{proof} By the Fourier inversion formula
	$$
	G(L/R^2)e^{-L/R^2}=\f{1}{2\pi}\int_\mathbb{R} \exp((i\tau -1)R^{-2}L)\widehat{G}(\tau)d\tau
	$$
	and so
	$$
	K_{F(\sL)}(x,y)=\f{1}{2\pi}\int_\mathbb{R}\widehat{G}(\tau)p_{(1-i\tau)/R^2}(x,y)d\tau
	$$
	where $G(\lambda)=[\delta_RF](\sqrt{\lambda})e^{\lambda}$.
	
	We note that from the Gaussian upper bound and the doubling condition \eqref{doubling1} we have
	\[
	|p_{(1-i\tau)/R^2}(x,y)|\le \f{C}{\sqrt{V(x,R^{-1})V(y,R^{-1})}}\exp\Big(-\f{R^2d(x,y)^2}{(1+\tau^2)}\Big).
	\]
	See \cite{O}.

	Therefore, 
	\begin{equation*}
	\begin{aligned}
	|K_{F(\sL)}(x,y)|
	&\leq C \f{(1+Rd(x,y))^{-s}}{\sqrt{V(x,R^{-1})V(y,R^{-1})}}\int_\mathbb{R}|\widehat{G}(\tau)|(1+|\tau|)^sd\tau\\
	&\leq C\f{(1+Rd(x,y))^{-s}}{\sqrt{V(x,R^{-1})V(y,R^{-1})}}\Big(\int_\mathbb{R}|\widehat{G}(\tau)|^2(1+|\tau|^2)^{s+\epsilon+1/2}d\tau\Big)^{1\over2}\\
	&~~~~~~~~\Big(\int_\mathbb{R}(1+|\tau|^2)^{-\epsilon-1/2}d\tau\Big)^{1\over 2}\\
	&\leq C\f{(1+Rd(x,y))^{-s}}{\sqrt{V(x,R^{-1})V(y,R^{-1})}}\|G\|_{W^2_{s+\epsilon+1/2}}.
	\end{aligned}
	\end{equation*}
	Since supp\, $F\subset [R/4, R]$, $\|G\|_{W^2_{s+\epsilon+1/2}}\leq C\|\delta_RF\|_{W^2_{s+\epsilon+1/2}}\leq C\|\delta_RF\|_{W^{\tilde q}_{s+\epsilon+1/2}}$. Hence,
	\begin{equation}\label{eq2-lem4}
	|K_{F(\sL)}(x,y)|\leq C
	\f{(1+Rd(x,y))^{-s}}{\sqrt{V(x,R^{-1})V(y,R^{-1})}}\|\delta_RF\|_{W^{\tilde q}_{s+\epsilon+1/2}}.
	\end{equation}
	Interpolating \eqref{eq2-lem4} and \eqref{eq-lem2-sm}, we obtain \eqref{eq1-lem4} as desired.
	
	This completes our proof.
	
\end{proof}

\bigskip

We now recall an estimate in \cite[Lemma 4.3]{DOS}.
\begin{lem}\label{lem-Lp sm}
	Let $R,s>0$. Then for any $\epsilon>0$  there exists a constant $C=C(s,\epsilon)$ such that
	\begin{equation}\label{eq-lem Lp sm}
	\Big[\int_X|K_{F(\sL)}(x,y)|^2(1+Rd(x,y))^{2s}  \dy\Big]^{1/2} \leq  \f{C}{V(x,R^{-1})^{\f{1}{2}}}\|\delta_RF\|_{W^{\tilde q}_{s+\epsilon}}
	\end{equation}
	for any bounded Borel function $F$ supported in $[R/4, R]$ and for all $x,y\in X$.
\end{lem}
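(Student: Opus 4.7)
The plan is to combine the Fourier-inversion representation used in the proof of Lemma \ref{lem4-sm} with the complex-time Gaussian bound for $p_z$, and then absorb the polynomial weight $(1+Rd(x,y))^{2s}$ into the Gaussian envelope. As a first step I would write $G(\lambda) = (\delta_R F)(\sqrt\lambda)\, e^{\lambda}$, so that by Fourier inversion
\[
K_{F(\sqrt L)}(x,y) = \f{1}{2\pi}\int_\mathbb{R} \widehat G(\tau)\, p_{(1-i\tau)/R^2}(x,y)\, d\tau,
\]
and invoke the complex-time Gaussian estimate $|p_{(1-i\tau)/R^2}(x,y)|\lesi V(x,R^{-1})^{-1}\exp(-cR^2 d(x,y)^2/(1+\tau^2))$ of \cite{O} already used in Lemma \ref{lem4-sm}.

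Next I would absorb the factor $(1+Rd(x,y))^{2s}$ into the Gaussian via the elementary inequality $(1+u)^{2s}e^{-cu^2}\lesi_s e^{-c'u^2}$ applied with $u=Rd(x,y)/\sqrt{1+\tau^2}$. This picks up a multiplicative factor $(1+|\tau|)^{2s}$ while preserving a Gaussian with a slightly smaller constant. Using the doubling estimate $V(x,(1+|\tau|)/R)\lesi (1+|\tau|)^n V(x,R^{-1})$ to evaluate the $y$-integral of the resulting Gaussian then yields
\[
\Big[\int_X (1+Rd(x,y))^{2s}\,|p_{(1-i\tau)/R^2}(x,y)|^2\,d\mu(y)\Big]^{1/2}\lesi \f{(1+|\tau|)^{s+n/2}}{V(x,R^{-1})^{1/2}}.
\]
Minkowski's integral inequality in the $\tau$-variable, followed by Cauchy--Schwarz, would reduce the matter to bounding $\int_\mathbb{R} |\widehat G(\tau)|(1+|\tau|)^{s+n/2}\,d\tau$ by $\|\delta_R F\|_{W^{\tilde q}_{s+\epsilon}}$.

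The main obstacle is precisely this last bound: the straightforward Fourier-inversion calculation just described loses $n/2+1/2$ derivatives on the right-hand side compared to the claimed sharp index $s+\epsilon$. To close the gap I would interpolate the rough weighted $L^2$ estimate above with the unweighted Plancherel-type bound \eqref{eq1-mainthmsm} (which costs no derivatives in $F$ but provides no spatial decay). A complex interpolation in the weight exponent $(1+Rd(x,y))^s$ trades spatial decay for derivative count and exactly compensates the $n/2+1/2$ loss, producing a Sobolev index matching the weight exponent up to the arbitrarily small $\epsilon$. An alternative route, should the interpolation prove technically awkward, is to dyadically decompose $F$ on the Fourier side and exploit the finite speed of propagation for the wave group $\cos(t\sqrt L)$, which is available here since (GE) implies the Davies--Gaffney estimate; this would confine each dyadic kernel to $\{d(x,y)\lesi 2^k\}$ and allow one to apply \eqref{eq1-mainthmsm} on each piece, summing the annular contributions against the polynomial weight.
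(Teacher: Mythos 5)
Your proposal is correct and follows essentially the same route as the source: the paper states this lemma only by citing \cite[Lemma~4.3]{DOS}, and the argument you describe --- Fourier inversion against the complex-time semigroup, absorbing the weight into the Gaussian at the cost of a factor $(1+|\tau|)^{2s}$, paying an additional $(1+|\tau|)^{n}$ when the $y$-integral is closed via doubling, Cauchy--Schwarz, and finally Stein interpolation in the weight exponent against the unweighted Plancherel estimate \eqref{eq1-mainthmsm} to eliminate the residual $n/2+1/2+\epsilon$ derivative loss --- is exactly the weighted-$L^2$ analogue of the paper's own proof of Lemma~\ref{lem4-sm} and the strategy of \cite{DOS}. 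Your accounting of the derivative loss and the way the interpolation parameter removes it (take the endpoint weight exponent $s_1$ large so $\theta=s/s_1$ small absorbs the fixed excess) are both accurate.
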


We now prove Theorem \ref{thm-spectralmultipliers}

\begin{proof}
	[Proof of (i) of Theorem \ref{thm-spectralmultipliers}:] 
	Suppose that $\psi$ is a partition of unity. Let $\varphi\in \mathscr{\mathbb{R}}$ such that $\supp \varphi\subset [1/4,4]$ and $\varphi=1$ on $[1/2,2]$. Then for each $t>0$, $x\in X$ and $f\in \F^{\alpha, L}_{p,2}(X)\cap L^2(X)$ with $p\ge 2$, we have
	\[
	\begin{aligned}
	t^{-\alpha}|\psi(t\sL)F(\sL)f(x)| &= t^{-\alpha}|\varphi(t\sL)\psi(t\sL)F(\sL)f(x)|=t^{-\alpha}|\varphi(t\sL)F(\sL)[\psi(t\sL)f](x)|\\
	&=\Big|\int_X t^{-\alpha}K_{\varphi(t\sL)F(\sL)}(x,y)\psi(t\sL)f(y)\dy\Big|\\
	&\le \Big[\int_X |K_{\varphi(t\sL)F(\sL)}(x,y)|^2 \Big(1+\f{d(x,y)}{t}\Big)^{2s'} \dy\Big]^{1/2}\\
	& \ \ \ \ \times \Big[\int_X   \Big(1+\f{d(x,y)}{t}\Big)^{-2s'}(t^{-\alpha}|\psi(t\sL)f(y)|)^2 dy\Big]^{1/2}
	\end{aligned}
	\]
	where $s>s'>\f{n}{2}$.
	
	Applying Lemma \ref{lem-Lp sm},
	\[
	\begin{aligned}
	\Big[\int_X |K_{\varphi(t\sL)F(\sL)}(x,y)|^2 \Big(1+\f{d(x,y)}{t}\Big)^{2s'} \dy\Big]^{1/2}&\lesi \f{1}{V(x,t)^{\f{1}{2}}}\|\varphi \delta_{t^{-1}}F\|_{W^{\tilde q}_s}\\
	&\lesi \f{1}{V(x,t)^{\f{1}{2}}}.
	\end{aligned}
	\]
	As a consequence,
	\[
	\begin{aligned}
t^{-\alpha}|\psi(t\sL)F(\sL)f(x)| &\lesi \Big[\int_X   \Big(1+\f{d(x,y)}{t}\Big)^{-2s'}(t^{-\alpha}|\psi(t\sL)f(y)|)^2 \f{dy}{V(x,t)}\Big]^{1/2}.
	\end{aligned}
	\]
	It follows
	\[
	\Big(\int_0^\vc (t^{-\alpha}|\psi(t\sL)F(\sL)f(x)|)^2\f{dt}{t}\Big)^{1/2}\lesi \mathcal G_{s', 2} (\psi(t\sL)f)(x).
	\]
	Therefore, by  Proposition \ref{prop-square function characterization} we have
	\[
	\begin{aligned}
	\|F(\sL)f\|_{\F^{\alpha, L}_{p,2}(X)}&\sim \Big\|\Big(\int_0^\vc |\psi(t\sL)F(\sL)f|^2\f{dt}{t}\Big)^{1/2}\Big\|_{p}\\
	&\lesi \left\|\mathcal G_{s', 2} (\psi(t\sL)f)\right\|_{p} \sim \|f\|_{\F^{\alpha, L}_{p,2}(X)}.
	\end{aligned}
	\]
    This implies that $F(\sL)$ is bounded on $\F^{\alpha, L}_{p,2}(X)$ for all $\alpha\in \mathbb{R}$ and $p\ge 2$. By the duality result in Proposition \ref{prop-duality}, $F(\sL)$ is bounded on $\F^{\alpha, L}_{p,2}(X)$ for all $\alpha\in \mathbb{R}$ and $1<p<\vc$.
    
    \medskip
    
    It remains to prove that $F(\sL)$ is bounded on $\F^{\alpha, L}_{p,2}(X)$ for all $\alpha\in \mathbb R$, $0<p\le 1$ and $s>n(\f{1}{p}-\f{1}{2})$. % Once this is proved, the boundedness of $F(\sL)$ on Besov spaces $\B^{\alpha, L}_{p,q}(X)$ follows immediately from Theorem \ref{mainthm-Interpolation}. 
     From Theorem \ref{thm-new atomic decomposition}, it suffices to prove that there exists $C>0$ so that
    \begin{equation}\label{eq1-proof FL}
    \|F(\sL)a\|_{\F^{\alpha, L}_{p,2}(X)}\le C
    \end{equation}
    for each $(L,M,\alpha, p,q)$ atom associated to some ball $B\subset X$.
    
    Let $\Phi$ be a function as in Lemma \ref{lem:finite propagation}. As in the proof of \eqref{eq1-proof new atomic decomposition}, there exists $\epsilon>0$ such that $\varphi(\xi):=|\xi|^{2m}\Phi(\xi)\ne 0$ on $\{\xi: \epsilon/2\le |\xi|\le 2\epsilon\}$ for $m>\alpha/2$. By Proposition \ref{prop4.1-heat kernel} and Remark \ref{rem0}, it suffices to prove that 
    \begin{equation}\label{eq2-proof FL}
    \Big\|\Big(\int_0^\vc(t^{-\alpha}|\psi(\epsilon t\sL)\varphi(t\sL)F(\sL)a|)^2\f{dt}{t}\Big)^{1/2}\Big\|_{p}\le C
    \end{equation}
    for each $(L,M,\alpha, p,2)$ atom $a$ associated to some ball $B\subset X$.
    
We note, by using Lemma \ref{lem:finite propagation}, that
    \begin{equation}\label{eq-finite propagation epsilon=1}
    K_{\varphi(t\sL)}(\cdot,\cdot)\subset  \{(x,y)\in X\times X:
    d(x,y)\leq t\}.
    \end{equation}

    To do this, we write
    \begin{equation}
    \label{eq-Fp2 norm}
    \begin{aligned}
    \Big\|\Big(\int_0^\vc (t^{-\alpha}&|\psi(\epsilon t\sL)\varphi(t\sL)F(\sL)a|)^2\f{dt}{t}\Big)^{1/2}\Big\|_{p}\\
    &\lesi \Big\|\Big(\int_0^{r_B} (t^{-\alpha}|\psi(\epsilon t\sL)\varphi(\epsilon t\sL)F(\sL)a|)^2\f{dt}{t}\Big)^{1/2}\Big\|_{p}\\
    & \ \ \  +\Big\|\Big(\int^\vc_{r_B} (t^{-\alpha}|\psi(\epsilon t\sL)\varphi(t\sL)F(\sL)a|)^2\f{dt}{t}\Big)^{1/2}\Big\|_{p}=:E_1 + E_2.
    \end{aligned}
    \end{equation}
    Let us take care of $E_1$ first. Observe that 
    \[
    \begin{aligned}
    E_1^p = \sum_{j\ge 0}\Big\|\Big(\int_0^{r_B} (t^{-\alpha}|\psi(\epsilon t\sL)\varphi( t\sL)F(\sL)a|)^2\f{dt}{t}\Big)^{1/2}\Big\|^p_{L^p(S_j(B))}=:\sum_{j\ge 0} E_{1j}.
    \end{aligned}
    \]
    For $j=0,1,2,3$, by H\"older's inequality we have
    \[
    \begin{aligned}
    E_{1j}&\lesi \Big\|\Big(\int_0^\vc(t^{-\alpha}|\psi(\epsilon t\sL)\varphi(t\sL)F(\sL)a|)^2\f{dt}{t}\Big)^{1/2}\Big\|^p_{2} V(2^jB)^{1-\f{p}{2}}\\
    &\lesi  \Big(\int_0^\vc (t^{-\alpha}\|\psi(\epsilon t\sL)\varphi(t\sL)F(\sL)a\|_2)^2\f{dt}{t}\Big)^{p/2}V(B)^{1-\f{p}{2}}.
    \end{aligned}
    \]
    Since $\varphi(t\sL)F(\sL)$ is bounded on $L^2(X)$, we have $\|\psi(\epsilon t\sL)\varphi(t\sL)F(\sL)a\|_2\lesi \|F\|_{L^\vc}\|\psi(\epsilon t\sL)a\|_2$. Hence,
    \[
    \begin{aligned}
    E_{1j}	&\lesi  \Big(\int_0^\vc (t^{-\alpha}\|\psi(\epsilon t\sL)a\|_2)^2\f{dt}{t}\Big)^{p/2}V(B)^{1-\f{p}{2}}\\
    &\lesi  \Big(\int_0^\vc (t^{-\alpha}\|\psi( \epsilon t\sL)a\|_2)^2\f{dt}{t}\Big)^{p/2}V(B)^{1-\f{p}{2}}\\
    &\sim \|a\|^p_{\F^{\alpha,L}_{2,2}}V(B)^{1-\f{p}{2}} \\
    &\lesi 1.
    \end{aligned}
    \]
    For $j> 3$ we have
    \[
    \begin{aligned}
    E_{1j}\le \int_{S_j(B)}\Big[\int_0^{r_B}\Big(\int_X |K_{\psi(\epsilon t\sL)F(\sL)}(x,y)||\varphi(t\sL)a(y)|\dy\Big)^2\f{dt}{t^{1+2\alpha}}\Big]^{p/2}\dx.
    \end{aligned}
    \]
    Due to \eqref{eq-finite propagation epsilon=1}, $\supp\, \varphi(t\sL)a\subset 2B$ as $t<r_B$. Hence,
    \begin{equation}\label{eq1-E1j}
    \begin{aligned}
    E_{1j}&\le \int_{S_j(B)}\Big[\int_0^{r_B}\Big(\int_{2B} |K_{\psi(\epsilon  t\sL)F(\sL)}(x,y)||\varphi(t\sL)a(y)|\dy\Big)^2\f{dt}{t^{1+2\alpha}}\Big]^{p/2}\dx\\
    &\le \Big[\int_{S_j(B)}\int_0^{r_B}\Big(\int_{2B} |K_{\psi(\epsilon t\sL)F(\sL)}(x,y)||\varphi(t\sL)a(y)|\dy\Big)^2\f{dt}{t^{1+2\alpha}}\dx\Big]^{p/2}V(2^jB)^{1-\f{p}{2}}
    \end{aligned}
    \end{equation}
    where in the last inequality we used H\"older's inequality.
    
    From Lemma \ref{lem-Lp sm} we have, for each $y\in 2B$,
    \begin{equation*}
    \begin{aligned}
    \Big[\int_{S_j(B)}&|K_{\psi(\epsilon  t\sL)F(\sL)}(x,y)|^2\dx \Big]^{1/2}\\
    &\lesi \Big(\f{\epsilon  t}{2^jr_B}\Big)^{s'} \Big[\int_{S_j(B)}|K_{\varphi(t\sL)F(\sL)}(x,y)|^2\Big(1+\f{d(x,y)}{t}\Big)^{2s'} \dx \Big]^{1/2}\|\psi \delta_{\epsilon t} F\|_{W^{\tilde q}_{s}}\\
    &\lesi \Big(\f{t}{2^jr_B}\Big)^{s'} \f{1}{V(y,t)^{\f{1}{2}}}\|\psi \delta_{\epsilon t} F\|_{W^{\tilde q}_{s}}.
    \end{aligned}
    \end{equation*}
    for $s>s'>n(\f{1}{p}-\f{1}{2})$.
    
    From the doubling conditions \eqref{doubling1}-\eqref{doubling3},
    \[
    \f{1}{V(y,t)^{\f{1}{2}}}\lesi \Big(\f{2^jr_B}{t}\Big)^{\f{n}{2}}\f{1}{V(y,2^jr_B)^{\f{1}{2}}}\sim \Big(\f{2^jr_B}{t}\Big)^{\f{n}{2}}\f{1}{V(2^jB)^{\f{1}{2}}}.
    \]
    Therefore,
    \begin{equation}\label{eq2-E1j}
    \begin{aligned}
    \Big[\int_{S_j(B)}|K_{\varphi(t\sL)F(\sL)}(x,y)|^2\dx \Big]^{1/2}&\lesi \Big(\f{t}{2^jr_B}\Big)^{s'-\f{n}{2} } \f{1}{V(2^jB)^{\f{1}{2}}}.
    \end{aligned}
    \end{equation}
    We now apply Minkowski's inequality for \eqref{eq1-E1j} and use \eqref{eq2-E1j} to obtain
    \begin{equation*}%\label{eq3-E1j}
    \begin{aligned}
    E_{1j}&\lesi \Big\{\int_0^{r_B}\Big[\int_{2B} \Big(\int_{S_j(B)}|K_{\psi(\epsilon t\sL)F(\sL)}(x,y)|^2\dx\Big)^{1/2}|\varphi(t\sL)a(y)|\dy\Big]^2\f{dt}{t^{1+2\alpha}}\Big\}^{p/2}V(2^jB)^{1-\f{p}{2}}\\
    &\lesi \Big\{\int_0^{r_B}\Big(\f{\epsilon t}{2^jr_B}\Big)^{2s'-n } \f{1}{V(2^jB)}\Big[\int_{2B} |\varphi( t\sL)a(y)|\dy\Big]^2\f{dt}{t^{1+2\alpha}}\dx\Big\}^{p/2}V(2^jB)^{1-\f{p}{2}}.
    \end{aligned}
    \end{equation*}
    Using H\"older's inequality,
    \begin{equation*}%\label{eq3-E1j}
    \begin{aligned}
    E_{1j}
    &\lesi \Big\{\int_0^{r_B}\Big(\f{t}{2^jr_B}\Big)^{2s'-n } \f{V(2B)}{V(2^jB)}\int_{2B} |\varphi(t\sL)a(y)|^2\dy \f{dt}{t^{1+2\alpha}}\dx\Big\}^{p/2}V(2^jB)^{1-\f{p}{2}}\\
    &\lesi 2^{-jp(s'-\f{n}{2})}\Big\{\int_0^{r_B}\int_{2B} |\varphi(t\sL)a(y)|^2\dy \f{dt}{t^{1+2\alpha}}\dx\Big\}^{p/2}V(2^jB)^{1-p}V(B)^{p/2}\\
    &\sim 2^{-jp(s'-\f{n}{2})}  \|a\|^p_{\F^{\alpha, L}_{p,2}(X)}V(2^jB)^{1-p}V(B)^{p/2}\\
    &\lesi 2^{-jp(s'-\f{n}{2})}V(2^jB)^{1-p}V(B)^{p-1}.
    \end{aligned}
    \end{equation*}
    This, along with the doubling condition \eqref{doubling2}, implies
    \[
    E_{1j}\lesi 2^{-jp(s'-\f{n}{2}) + jn(1-p)}= 2^{-jp\left[s'-n(\f{1}{p}-\f{1}{2})\right]}.
    \]
    As a consequence,
    \[
    E_1^p=\sum_{j\ge 0} E_{1j} \lesi 1
    \]
    as along as $s'>n(\f{1}{p}-\f{1}{2})$.

    We now estimate the term $E_2$. Similarly to $E_1$, we write  
    \[
    \begin{aligned}
    E_2^p = \sum_{j\ge 0}\Big\|\Big(\int_{r_B}^\vc (t^{-\alpha}|\psi(\epsilon t\sL)\varphi(t\sL)F(\sL)a|)^2\f{dt}{t}\Big)^{1/2}\Big\|^p_{L^p(S_j(B))}=:\sum_j E_{2j}.
    \end{aligned}
    \]
    Arguing similarly to the terms $E_{2j}$, for $j=0,1,2,3$, we have
    \[
    \begin{aligned}
    E_{2j}&\lesi 1.
    \end{aligned}
    \]
    For $j> 3$ we have
    \[
    \begin{aligned}
    E_{2j}\le \int_{S_j(B)}\Big[\int_0^{r_B}\Big(\int_X |K_{\psi(\epsilon t\sL)F(\sL)}(x,y)||\varphi(t\sL)a(y)|\dy\Big)^2\f{dt}{t^{1+2\alpha}}\Big]^{p/2}\dx.
    \end{aligned}
    \]
    Due to \eqref{eq-finite propagation epsilon=1}, $\supp\, L^M\varphi(t\sL)b\subset B(x_B, 2t)$ as $t\ge r_B$ where $a=L^Mb$. Hence,
    \begin{equation}\label{eq1-E2j}
    \begin{aligned}
    E_{2j}&\le \int_{S_j(B)}\Big[\int^\vc_{r_B}\Big(\int_{B(x_B, 2t)} |K_{\psi(\epsilon t\sL)F(\sL)}(x,y)||L^M\varphi(t\sL)b(y)|\dy\Big)^2\f{dt}{t^{1+2\alpha}}\Big]^{p/2}\dx\\
    &\le \Big[\int_{S_j(B)}\int_{r_B}^\vc \Big(\int_{B(x_B, 2t)} |K_{\psi(\epsilon t\sL)F(\sL)}(x,y)||L^M\varphi(t\sL)b(y)|\dy\Big)^2\f{dt}{t^{1+2\alpha}}\dx\Big]^{p/2}V(2^jB)^{1-\f{p}{2}}.
    \end{aligned}
    \end{equation}
    where in the last inequality we used H\"older's inequality.
    
    Using the argument as above, we obtain
    \begin{equation*}%\label{eq3-E1j}
    \begin{aligned}
    E_{2j}&\lesi \Big\{\int_{r_B}^\vc\Big(\f{t}{2^jr_B}\Big)^{2s'-n } \f{1}{V(2^jB)}\Big[\int_{B(x_B,2t)} |(t^2L)^M\varphi(t\sL)b(y)|\dy\Big]^2\f{dt}{t^{1+2\alpha+4M}}\dx\Big\}^{p/2}V(2^jB)^{1-\f{p}{2}}.
    \end{aligned}
    \end{equation*}
    By H\"older's inequality,
    \begin{equation*}%\label{eq3-E1j}
    \begin{aligned}
    E_{2j}
    &\lesi \Big\{\int^\vc_{r_B}\Big(\f{t}{2^jr_B}\Big)^{2s'-n } \f{V(x_B,2t)}{V(2^jB)}\int_{B(x_B,2t)} |(t^2L)^M\varphi(t\sL)b(y)|^2\dy \f{dt}{t^{1+2\alpha+4M}}\dx\Big\}^{p/2}V(2^jB)^{1-\f{p}{2}}.
    \end{aligned}
    \end{equation*}
    Taking $M> \f{n}{4}(2s'-n)+n$, we have
    \begin{equation*}%\label{eq3-E1j}
    \begin{aligned}
    E_{2j}&\lesi 2^{-jp(s'-\f{n}{2})}r_B^{-2Mp}\Big\{\int_0^{r_B}\int_{B(x_B,2t)} |\varphi(t\sL)a(y)|^2\dy \f{dt}{t^{1+2\alpha}}\dx\Big\}^{p/2}V(2^jB)^{1-p}V(B)^{p/2}\\
    &\sim 2^{-jp(s'-\f{n}{2})}r_B^{-2Mp}  \|b\|^p_{\F^{\alpha, L}_{p,2}(X)}V(2^jB)^{1-p}V(B)^{p/2}\\
    &\lesi 2^{-jp(s'-\f{n}{2})}V(2^jB)^{1-p}V(B)^{p-1}\\
    &\lesi 2^{-jp(s'-\f{n}{2}) + jn(1-p)}= 2^{-jp\left[s'-n(\f{1}{p}-\f{1}{2})\right]}.
    \end{aligned}
    \end{equation*}
    
    It follows that
    \[
    E_2^p=\sum_{j\ge 0} E_{2j} \lesi 1
    \]
    as along as $s'>n(\f{1}{p}-\f{1}{2})$.
    
    The estimates of $E_1$, $E_2$ and \eqref{eq-Fp2 norm} imply that \eqref{eq2-proof FL} as desired. This completes the proof of the item (i) in Theorem \ref{thm-spectralmultipliers}.
    
\end{proof}	
	
	\bigskip

The proof of the item (ii) in Theorem \ref{thm-spectralmultipliers} relies on the following lemma:
	
	\begin{lem}\label{lem2-thm2 atom Besov}
		Let $\psi$ be a partition of unity and  let $a_Q$ be an $(L, M, p)$ atom with some $Q\in \mathscr{D}_\nu$. Then for any $t>0$ and  $N>0$ we have:
		\begin{equation}\label{eq- psi atom}
		|\psi(t\sL)F(\sL) a_Q(x)|\lesi \Big(\f{t}{2^{-\nu}}\wedge \f{2^{-\nu}}{t}\Big)^{2M-n}V(Q)^{-1/p} \Big(1+\f{d(x,x_Q)}{2^{-\nu}\vee t}\Big)^{-s'+\f{\tilde n}{2}}
		\end{equation}
		for any $s>s'>\f{n}{1\wedge p\wedge q}+\f{\tilde{n}}{2}$.
	\end{lem}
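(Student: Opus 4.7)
The plan is to split the proof into the two regimes $t\le 2^{-\nu}$ and $t\ge 2^{-\nu}$, with the factor $(t/2^{-\nu}\wedge 2^{-\nu}/t)^{2M-n}$ arising from two different mechanisms. In both cases, the starting point is the kernel estimate
\[
|K_{\varphi(t\sL)F(\sL)}(x,y)| \lesi \f{(1+d(x,y)/t)^{-s}}{\sqrt{V(x,t)V(y,t)}}
\]
valid for any $\varphi\in C_c^\infty$ supported in $[1/2,2]$. This follows from Lemma \ref{lem4-sm} applied to the function $\phi(\lambda) := \varphi(t\lambda)F(\lambda)$ (supported in $[1/(2t),2/t]$) once one checks that $\|\delta_{2/t}\phi\|_{W^{\tilde q}_{s+\epsilon}}$ is dominated by the quantity $\sup_{r>0}\|\eta\,\delta_r F\|_{W^{\tilde q}_s}$, which is finite by hypothesis.

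For the regime $t\ge 2^{-\nu}$, I use the atomic representation $a_Q = L^M b_Q$: commuting $L^M$ through functions of $L$, and setting $\Psi(\lambda) := \lambda^{2M}\psi(\lambda)$ (still in $C_c^\infty$ with support in $[1/2,2]$), I get
\[
\psi(t\sL)F(\sL)a_Q = t^{-2M}\,\Psi(t\sL)F(\sL)b_Q.
\]
Since $\supp b_Q \subset 3B_Q$ and $|b_Q|\le \ell(Q)^{2M}V(Q)^{-1/p}$, the kernel estimate yields
\[
|\psi(t\sL)F(\sL)a_Q(x)| \lesi \Big(\f{2^{-\nu}}{t}\Big)^{\!2M}\! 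V(Q)^{-1/p}\!\int_{3B_Q}\!\f{(1+d(x,y)/t)^{-s}}{\sqrt{V(x,t)V(y,t)}}\dy.
\]
Replacing $d(x,y)$ by $d(x,x_Q)$ up to a loss $s\to s'$ (standard, since $y\in 3B_Q$ forces $d(x,y)\ge d(x,x_Q)/2$ whenever $d(x,x_Q)\ge 6\ell(Q)$, and the factor is bounded otherwise), and using the doubling estimate
\[
\f{V(3B_Q)}{\sqrt{V(x,t)V(x_Q,t)}} \lesi \f{V(3B_Q)}{V(x_Q,t)}\Big(1+\f{d(x,x_Q)}{t}\Big)^{\!\tilde n/2} \le \Big(1+\f{d(x,x_Q)}{t}\Big)^{\!\tilde n/2},
\]
(where the last step uses $V(3B_Q)\le V(x_Q,t)$ since $\ell(Q)\le t$), I obtain the desired bound in this regime, since $(2^{-\nu}/t)^{2M}\le (2^{-\nu}/t)^{2M-n}$ because $2^{-\nu}/t\le 1$ and $2M>n$ (this will be part of the assumption $M$ is large).

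For the regime $t<2^{-\nu}$, the cancellation must come instead from the vanishing of $\psi$ at the origin. Since $\psi\in\mathscr{S}$ is supported away from $0$, we may write $\psi(\lambda)=\lambda^{2M}\tilde\psi(\lambda)$ with $\tilde\psi\in C_c^\infty$ supported in $[1/2,2]$; hence
\[
\psi(t\sL)F(\sL)a_Q = t^{2M}\tilde\psi(t\sL)F(\sL)L^{M}a_Q = t^{2M}\tilde\psi(t\sL)F(\sL)L^{2M}b_Q.
\]
Using the $k=2M$ case of the atom size estimate, $|L^{2M}b_Q|\le \ell(Q)^{-2M}V(Q)^{-1/p}$ on $3B_Q$, together with the kernel bound as before,
\[
|\psi(t\sL)F(\sL)a_Q(x)| \lesi \Big(\f{t}{2^{-\nu}}\Big)^{\!2M}\! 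V(Q)^{-1/p}\!\int_{3B_Q}\!\f{(1+d(x,y)/t)^{-s}}{\sqrt{V(x,t)V(y,t)}}\dy.
\]
Now for $t<\ell(Q)$, doubling gives $V(y,t)\gtrsim (t/\ell(Q))^n V(3B_Q)$ on $3B_Q$, and a parallel estimate for $V(x,t)^{-1/2}$ produces an extra factor $(\ell(Q)/t)^{\tilde n/2}$ from the transfer $x\leftrightarrow x_Q$. The resulting volume integral contributes an overall factor $(\ell(Q)/t)^n$, yielding the cancellation $(t/\ell(Q))^{2M}\cdot(\ell(Q)/t)^n = (t/\ell(Q))^{2M-n}$, together with the decay factor $(1+d(x,x_Q)/\ell(Q))^{-s'+\tilde n/2}$.

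The main delicate point is the tracking of the two doubling dimensions: the exponent $n$ governs how volume scales when shrinking $B(y,\ell(Q))$ down to $B(y,t)$, and is precisely responsible for the loss from $2M$ to $2M-n$; meanwhile $\tilde n$ controls the cost $(1+d(x,x_Q)/(2^{-\nu}\vee t))^{\tilde n/2}$ of replacing $V(x,t)$ by $V(x_Q,t)$ via \eqref{doubling3}. The decay at the coarser scale $2^{-\nu}\vee t$ rather than $t$ in the final bound follows automatically, since for $t\le 2^{-\nu}$ one has $(1+d(x,x_Q)/t)^{-s'+\tilde n/2}\le (1+d(x,x_Q)/\ell(Q))^{-s'+\tilde n/2}$ provided $s'>\tilde n/2$, which is ensured by the hypothesis $s'>\frac{n}{1\wedge p\wedge q}+\frac{\tilde n}{2}$.
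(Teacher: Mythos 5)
Your proof is essentially correct and follows the same strategy as the paper: the same two-regime split, the same device of rewriting $\psi(\lambda)=\lambda^{\pm 2M}\tilde\psi(\lambda)$ to move $L$-powers onto or off of the atom, the same size estimates from the atom definition with $k=0$ and $k=2M$, and the same doubling-volume count producing the $n$ in $2M-n$ and the $\tilde n/2$ correction in the decay exponent.

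One conceptual point is worth fixing. You apply Lemma \ref{lem4-sm} with decay exponent $s$ and then claim the loss from $s$ to $s'$ comes from replacing $d(x,y)$ by $d(x,x_Q)$. But that replacement costs nothing: for $y\in 3B_Q$ and scale $\ell(Q)\vee t$, one has $(1+d(x,y)/(\ell(Q)\vee t))\sim(1+d(x,x_Q)/(\ell(Q)\vee t))$ with uniform constants, so no exponent is sacrificed there. The real loss from $s$ to $s'$ already occurs at the kernel-estimate step: Lemma \ref{lem4-sm} with decay exponent $\sigma$ requires control of the $W^{\tilde q}_{\sigma+\epsilon}$ norm, and the hypothesis only gives control at order $s$, so one must take $\sigma=s'<s$ (with $\epsilon$ small) from the outset. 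The further loss of $\tilde n/2$ comes from converting $1/\sqrt{V(x,t)V(y,t)}$ into $1/V(y,t)$ via \eqref{doubling3}, exactly as the paper does in passing to its equation (eq1s-lem4). Your mistaken attribution happens to compensate, so the final exponent $-s'+\tilde n/2$ is correct, but the bookkeeping should be cleaned up. Everything else, in particular the $(\ell(Q)/t)^n$ factor from $\int_{3B_Q}V(y,t)^{-1}\,d\mu(y)$ when $t\le\ell(Q)$ and the observation that $(2^{-\nu}/t)^{2M}\le(2^{-\nu}/t)^{2M-n}$ when $t\ge 2^{-\nu}$ and $2M>n$, matches the paper's argument.
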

	\begin{proof}
		Note that from \eqref{eq1-lem4} and \eqref{doubling3}, we have,  	
		\begin{equation}\label{eq1s-lem4}
		\begin{aligned}
		|K_{\psi(t\sL)F(\sL)}(x,y)| &\lesi   \f{1}{V(x\vee y, t)}\Big(1+\f{d(x,y)}{t}\Big)^{-s'+\f{\tilde n}{2}}\|\psi\delta_{t^{-1}}F\|_{W^{\tilde q}_{s}}\\
		&\lesi   \f{1}{V(x\vee y, t)}\Big(1+\f{d(x,y)}{t}\Big)^{-s'+\f{\tilde n}{2}}
		\end{aligned}
		\end{equation}
		for all $t>0$ and all $x,y\in X$ where $s'\in (0,s)$ such that $s'>\f{\tilde n}{2}+\f{ n}{1\wedge p\wedge q}$.
		
		We now consider two cases: $t\le 2^{-\nu}$ and $t>2^{-\nu}$.

		\noindent{\bf Case 1: $t\le 2^{-\nu}$.} Observe that
		$$
		\psi(t\sL)F(\sL) a_Q=t^{2M}\psi_M(t\sL)F(\sL)(L^Ma_Q) 
		$$
		where $\psi_M(\lambda)=\lambda^{-2M}\psi(\lambda)$.
		
		This, along with \eqref{eq1s-lem4} and the definition of the atoms, yields
		$$
		\begin{aligned}
		|\psi(t\sL) a_Q(x)|&\lesi \int_{3B_Q} \f{t^{2M}}{V(y,t)}\Big(1+\f{d(x,y)}{t}\Big)^{-N}|L^{M}a_Q(y)|d\mu(y)\\
		&\lesi \Big(\f{t}{2^{-\nu}}\Big)^{2M}V(Q)^{-1/p} \int_{3B_Q}\f{1}{V(y,t)}\Big(1+\f{d(x,y)}{t}\Big)^{-s'+\f{\tilde n}{2}}d\mu(y).
		\end{aligned}
		$$
		Note that, for $t\le 2^{-\nu}$ and $y\in 3B_Q$, we have
		\[
		\Big(1+\f{d(x,y)}{t}\Big)^{-s'+\f{\tilde n}{2}}\le \Big(1+\f{d(x,y)}{2^{-\nu}}\Big)^{-s'+\f{\tilde n}{2}}\sim \Big(1+\f{d(x,x_Q)}{2^{-\nu}}\Big)^{-s'+\f{\tilde n}{2}}.
		\]
		Therefore,
		$$
		\begin{aligned}
		|\psi(t\sL)F(\sL) a_Q(x)|&\lesi \Big(\f{t}{2^{-\nu}}\Big)^{2M}V(Q)^{-1/p}\Big(1+\f{d(x,x_Q)}{2^{-\nu}}\Big)^{-s'+\f{\tilde n}{2}} \f{V(3B_Q)}{V(y,t)}\\
		&\lesi \Big(\f{t}{2^{-\nu}}\Big)^{2M-n}V(Q)^{-1/p}\Big(1+\f{d(x,x_Q)}{2^{-\nu}}\Big)^{-s'+\f{\tilde n}{2}}
		\end{aligned}
		$$
		where in the last inequality we use \eqref{doubling2}. This leads us to \eqref{eq- psi atom}.
		
		\medskip
		
		\noindent{\bf Case 2: $t> 2^{-\nu}$.} We first write  $a_Q=L^{M}b_Q$. Hence,
		\[
		\psi(t\sL)F(\sL)a_Q=t^{-2M}\tilde \psi_M(t\sL)F(\sL)b_Q
		\]
		where $\tilde \psi_M(\lambda)=\lambda^{2M}\psi(\lambda)$.
		
		This, along with Lemma \ref{lem1}, implies 
		$$
		\begin{aligned}
		|\psi(t\sL)F(\sL)a_Q(x)|&\lesi \int_{3B_Q}\f{t^{-2M}}{V(y,t)}\Big(1+\f{d(x,y)}{t}\Big)^{-s'+\f{\tilde n}{2}}|b_Q(y)|d\mu(y)\\
		&\lesi \Big(\f{2^{-\nu}}{t}\Big)^{2M} V(Q)^{-1/p}\int_{3B_Q}\f{1}{V(y,t)}\Big(1+\f{d(x,y)}{t}\Big)^{-s'+\f{\tilde n}{2}}d\mu(y).
		\end{aligned}
		$$
		Note that for $y\in 3B_Q$ and $t\ge 2^{-\nu}\sim \ell(Q)$ we have
		\[
		\Big(1+\f{d(x,y)}{t}\Big)^{-s'+\f{\tilde n}{2}}\sim \Big(1+\f{d(x,x_Q)}{t}\Big)^{-s'+\f{\tilde n}{2}}. 
		\]
		Hence, the above inequality simplifies into 
		$$
		\begin{aligned}
		|\psi(t\sL)F(\sL)a_Q(x)|
		&\lesi \Big(\f{2^{-\nu}}{t}\Big)^{2M} V(Q)^{-1/p}\Big(1+\f{d(x,x_Q)}{t}\Big)^{-s'+\f{\tilde n}{2}} \f{V(3B_Q)}{V(y,t)}\\
		&\lesi \Big(\f{2^{-\nu}}{t}\Big)^{2M} V(Q)^{-1/p}\Big(1+\f{d(x,x_Q)}{t}\Big)^{-s'+\f{\tilde n}{2}}.
		\end{aligned}
		$$
		The desired estimate  \eqref{eq- psi atom} then follows.
	\end{proof}

The following lemma is taken from \cite{BBD}.
\begin{lem}\label{lem1- thm2 atom Besov}
	Let $N>n$ and $\eta, \nu \in \mathbb{Z}$, $\nu\geq \eta$. Assume that  $\{f_Q\}_{Q\in \mathscr{D}_\nu}$ is a  sequence of functions satisfying
	$$
	|f_Q(x)|\lesi  \Big(1+\f{d(x,x_Q)}{2^{-\eta}}\Big)^{-N}.
	$$
	Then for $\f{n}{N}<r\leq 1$ and a sequence of numbers $\{s_Q\}_{Q\in \mathscr{D}_\nu}$, we have
	$$
	\sum_{Q\in \mathscr{D}_\nu}|s_Q|\,|f_Q(x)|\lesi 2^{n(\nu-\eta)/r}\mathcal{M}_{r}\Big(\sum_{Q\in \mathscr{D}_\nu}|s_Q|\chi_Q\Big)(x).
	$$
\end{lem}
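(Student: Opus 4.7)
The plan is to raise the target inequality to the $r$-th power and exploit the elementary subadditivity $(\sum_k a_k)^r \leq \sum_k a_k^r$ valid for non-negative reals and $0<r\leq 1$. This reduces the task to showing
\[
\sum_{Q\in \mathscr{D}_\nu} |s_Q|^r |f_Q(x)|^r \lesi 2^{n(\nu-\eta)}\,\mathcal{M}_r\Big(\sum_{Q} |s_Q|\chi_Q\Big)(x)^r,
\]
which is now amenable to the pointwise maximal estimate of Lemma \ref{lem-elementary}(b). The key structural observation is that if we set $g:=\sum_{Q\in \mathscr{D}_\nu} |s_Q|\chi_Q$, then the disjointness of cubes at a common level gives $g\equiv |s_Q|$ on each $Q$, whence
\[
|s_Q|^r = \frac{1}{|Q|}\int_Q g(y)^r\,d\mu(y).
\]

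With this identity in hand, I would substitute the decay hypothesis $|f_Q(x)|^r \lesi (1+2^\eta d(x,x_Q))^{-Nr}$ and use that for $y\in Q$ the relation $\nu\geq \eta$ forces $d(y,x_Q)\leq \kappa_0 2^{-\nu}\leq \kappa_0 2^{-\eta}$, so $1+2^\eta d(x,x_Q)\sim 1+2^\eta d(x,y)$ and $|Q|\sim V(y,2^{-\nu})$ uniformly. Disjointness of the cubes then lets me collapse the sum into a single integral over $X$:
\[
\sum_{Q\in \mathscr{D}_\nu} |s_Q|^r |f_Q(x)|^r \lesi \int_X \frac{g(y)^r\,(1+2^\eta d(x,y))^{-Nr}}{V(y,2^{-\nu})}\, d\mu(y).
\]

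To finish, the scale mismatch between $V(y,2^{-\nu})$ in the denominator and $2^\eta$ in the decay is absorbed by \eqref{doubling2}: since $\nu\geq\eta$, one has $V(y,2^{-\eta})\lesi 2^{n(\nu-\eta)} V(y,2^{-\nu})$, which pulls out the factor $2^{n(\nu-\eta)}$. The assumption $r>n/N$ is precisely $Nr>n$, so Lemma \ref{lem-elementary}(b), applied to $f=g^r$ at scale $s=2^{-\eta}$ (after noting $V(x\wedge y,2^{-\eta})\leq V(y,2^{-\eta})$), controls the resulting integral by $\mathcal{M}(g^r)(x)=\mathcal{M}_r(g)(x)^r$. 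Taking $r$-th roots then yields the target inequality. The main conceptual step is the initial passage to the $r$-th power: it is not immediately apparent how $\sum_Q |s_Q|$ itself connects to an $L^r$-type maximal function, and the averaging identity for $|s_Q|^r$ only becomes useful after that move.
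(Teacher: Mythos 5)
Your proof is correct. The paper itself does not reproduce a proof of this lemma (it is cited from \cite{BBD}), but the argument you give is the standard Frazier--Jawerth-type one, and it is complete: raising to the $r$-th power and using $(\sum a_k)^r\le\sum a_k^r$ for $0<r\le1$; the exact identity $|s_Q|^r=\frac{1}{\mu(Q)}\int_Q g^r\,d\mu$ (valid because cubes in a fixed generation $\mathscr{D}_\nu$ are pairwise disjoint, so $g\equiv|s_Q|$ on $Q$); the comparability $\bigl(1+2^\eta d(x,x_Q)\bigr)^{-N}\lesssim\bigl(1+2^\eta d(x,y)\bigr)^{-N}$ for $y\in Q$ (via the quasi-triangle inequality and $d(y,x_Q)\lesssim 2^{-\nu}\le 2^{-\eta}$) together with $\mu(Q)\sim V(y,2^{-\nu})$; pulling out $2^{n(\nu-\eta)}$ via \eqref{doubling2}; and closing with Lemma \ref{lem-elementary}(b) applied to $g^r$ at scale $s=2^{-\eta}$, which requires exactly $Nr>n$, i.e.\ $r>n/N$. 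One tiny notational remark: you should write $\mu(Q)$ rather than $|Q|$ since $X$ is a general space of homogeneous type, but this does not affect the argument.
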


We now give the proof for the item (ii) in Theorem \ref{thm-spectralmultipliers}.
\begin{proof}
	[Proof of (ii) of Theorem \ref{thm-spectralmultipliers}:] Fix $s'$ such that 
	$s>s'>\f{n}{1\wedge p\wedge q}+\f{\tilde{n}}{2}$.
	 
Let  $f\in \F^{\alpha, L}_{p,q}(X)\cap L^2(X)$. By Theorem \ref{thm1- atom TL spaces},  there exist a sequence of $(L,M,p)$ atoms $\{a_Q\}_{Q\in \mathscr{D}_\nu, \nu\in \mathbb{Z}}$ and a sequence of coefficients  $\{s_Q\}_{Q\in \mathscr{D}_\nu, \nu\in\mathbb{Z}}$ so that
$$
f=\sum_{\nu\in\mathbb{Z}}\sum_{Q\in \mathscr{D}_\nu}s_Qa_Q \ \ \text{in $L^2(X)$}
$$
and
\begin{equation}\label{eq1- TL space - prof spm}
\Big\|\Big[\sum_{\nu\in\mathbb{Z}}2^{\nu\alpha q}\Big(\sum_{Q\in \mathscr{D}_\nu}V(Q)^{-1/p}|s_Q|\chi_Q\Big)^q\Big]^{1/q}\Big\|_{p}\lesi \|f\|_{\F^{\alpha,L}_{p,q}}.
\end{equation}

As a consequence, we have
\[
F(\sL)f = \sum_{\nu\in\mathbb{Z}}\sum_{Q\in \mathscr{D}_\nu}s_Q  F(\sL)a_Q .
\]
This, in combination with Lemmas \ref{lem1- thm2 atom Besov} and \eqref{lem2-thm2 atom Besov}, implies
$$
\begin{aligned}
2^{j\alpha}|\psi_j(\sL)F(\sL)f|
&\lesi \sum_{\nu: \nu\geq j}2^{-(\nu-j)(2M-n/r-\alpha)}\mathcal{M}_{r}\Big(\sum_{Q\in \mathscr{D}_\nu}2^{\nu\alpha}|s_Q|V(Q)^{-1/p}\chi_Q\Big)\\
& \ \ \ \ +\sum_{\nu: \nu< j}2^{-(2M-\alpha)(j-\nu)}\mathcal{M}_{r}\Big(\sum_{Q\in \mathscr{D}_\nu}2^{\nu\alpha}|s_Q|V(Q)^{-1/p}\chi_Q\Big)
\end{aligned}
$$
where $\f{n}{s'-\f{\tilde n}{2}}<r<\min\{1, p,q\}$.

It follows that
\begin{equation}
\label{eq- q spm}
\begin{aligned}
\|F(\sL)f\|_{\F^{\alpha,L}_{p,q}(X)}&\sim \Big\|\Big[\sum_{j\in \mathbb{Z}}\left(2^{j\alpha}|\psi_j(\sL)F(\sL)f|\right)^q\Big]^{1/q}\Big\|_p\\
&\lesi \Big\|\Big\{\sum_{j\in\mathbb{Z}}\Big[\sum_{\nu: \nu\geq j}2^{-(\nu-j)(2M-n/r-\alpha)}\mathcal{M}_{r}\Big(\sum_{Q\in \mathscr{D}_\nu}2^{\nu\alpha}|s_Q|V(Q)^{-1/p}\chi_Q\Big)\Big]^q\Big\}^{1/q}\Big\|_p\\
& \ \ \ +\Big\|\Big\{\sum_{j\in\mathbb{Z}}\Big[\sum_{\nu: \nu< j}2^{-(j-\nu)(2M-\alpha)}\mathcal{M}_{r}\Big(\sum_{Q\in \mathscr{D}_\nu}2^{\nu\alpha}|s_Q|V(Q)^{-1/p}\chi_Q\Big)\Big]^q\Big\}^{1/q}\Big\|_p.
\end{aligned}
\end{equation}
If $1\le q<\vc$, then applying Young's inequality we have
\[
\begin{aligned}
\|F(\sL)f\|_{\F^{\alpha,L}_{p,q}(X)}&:=\Big\|\Big[\sum_{j\in \mathbb{Z}}\left(2^{j\alpha}|\psi_j(\sL)F(\sL)f|\right)^q\Big]^{1/q}\Big\|_p\\
&\lesi \Big\| \Big[\sum_{\nu\in \mathbb{Z}} \mathcal{M}_{r}\Big(\sum_{Q\in \mathscr{D}_\nu}2^{\nu\alpha}|s_Q|V(Q)^{-1/p}\chi_Q\Big)^q \Big]^{1/q}\Big\|_p\\
&\lesi \Big\| \Big[\sum_{\nu\in \mathbb{Z}} \Big(\sum_{Q\in \mathscr{D}_\nu}2^{\nu\alpha}|s_Q|V(Q)^{-1/p}\chi_Q\Big)^q \Big]^{1/q}\Big\|_p\\
&\lesi \|f\|_{\F^{\alpha,L}_{p,q}(X)}
\end{aligned}
\]
where in the second inequality we used Fefferman-Stein's inequality and in the last inequality we used \eqref{eq1- TL space - prof spm}.

If $q\in (0,1)$, using the inequality 
\[
\Big(\sum_{j}|a_j|\Big)^q\lesi \sum_j |a_j|^q
\]
for \eqref{eq- q spm}, we obtain

\begin{equation*}
\begin{aligned}
\|F(\sL)f\|_{\F^{\alpha,L}_{p,q}(X)}&\sim \Big\|\Big[\sum_{j\in \mathbb{Z}}\left(2^{j\alpha}|\psi_j(\sL)F(\sL)f|\right)^q\Big]^{1/q}\Big\|_p\\
&\lesi \Big\|\Big\{\sum_{j\in\mathbb{Z}} \sum_{\nu: \nu\geq j}2^{-q(\nu-j)(2M-n/r-\alpha)}\mathcal{M}_{r}\Big(\sum_{Q\in \mathscr{D}_\nu}2^{\nu\alpha}|s_Q|V(Q)^{-1/p}\chi_Q\Big)^q\Big\}^{1/q}\Big\|_p\\
& \ \ \ +\Big\|\Big\{\sum_{j\in\mathbb{Z}}\sum_{\nu: \nu< j}2^{-q(j-\nu)(2M-\alpha)}\mathcal{M}_{r}\Big(\sum_{Q\in \mathscr{D}_\nu}2^{\nu\alpha}|s_Q|V(Q)^{-1/p}\chi_Q\Big)^q\Big\}^{1/q}\Big\|_p.
\end{aligned}
\end{equation*}
Therefore,
\[
\begin{aligned}
\|F(\sL)f\|_{\F^{\alpha,L}_{p,q}(X)}&\lesi \Big\| \Big[\sum_{\nu\in \mathbb{Z}} \mathcal{M}_{r}\Big(\sum_{Q\in \mathscr{D}_\nu}2^{\nu\alpha}|s_Q|V(Q)^{-1/p}\chi_Q\Big)^q \Big]^{1/q}\Big\|_p.
\end{aligned}
\]
At this stage, arguing similarly to the case $1\le q<\vc$, we obtain
\[
\|F(\sL)f\|_{\F^{\alpha,L}_{p,q}(X)}\lesi \|f\|_{\F^{\alpha,L}_{p,q}(X)}.
\]
This completes our proof.
\end{proof}

\medskip

We are ready to give the proof for Theorem \ref{mainthm-spectralmultipliers}.
\begin{proof}[Proof of Theorem \ref{mainthm-spectralmultipliers}:] 
	We note that the condition $s>\f{1}{\tilde q}$ guarantees that $\|F\|_{L^\vc}\lesi \|F\|_{W^{\tilde q}_s}$. As a consequence,
	\[
	\|F\|_{L^\vc} + \sup_{t>0}\|\eta\, \delta_tF\|_{W^{\tilde q}_s}\lesi F(0)+\sup_{t>0}\|\eta\, \delta_tF\|_{W^{\tilde q}_s}.
	\]
Moreover, by using the trick as in the proof of Theorem 5.4 in \cite{KU} without loss of generality we may assume that $L$ is injective which implies $F(0)=0$. For this reason, the right hand sides in \eqref{eq1-thm1} and \eqref{eq2-thm1} becomes $\sup_{t>0}\|\eta\, \delta_tF\|_{W^{\tilde q}_s}$.

Keeping this in mind, we first prove that $F(\sL)$ is bounded on $\F_{p,q}^{\alpha,L}(X)$ for all $1<p,q<\vc$ and $\alpha\in \mathbb{R}$ provided that $s>\f{n}{2}$ and its operator norm is bounded by a multiple of $\sup_{t>0}\|\eta\, \delta_tF\|_{W^{\tilde q}_s}$.
	
%\begin{figure}
%	\centering
	\centerline{\begin{tikzpicture}[scale=2.5]
	%\draw[step=.5cm, gray, very thin] (-1.2,-1.2) grid (1.2,1.2); 
	%\filldraw[fill=green!20,draw=green!50!black] (0,0) -- (3mm,0mm) arc (0:30:3mm) -- cycle; 
	\fill[fill=gray!10]
	(0,0) --(0,0.5)--(1.25,1.25)--(1.25,0.5)--(0,0);
	\draw[->] (0,0) -- (1.6,0) node[anchor=north west] (x axis) {$1/p$};
	\draw[->] (0,0) -- (0,1.6) node[anchor=south east] (y axis) {$1/q$};
	\foreach \x/\xtext in {0/O, 0.5/\f{1}{2}, 1,1.25/\f{1}{p_0}} 
	\draw (\x cm,1pt) -- (\x cm,-1pt) node[anchor=north,fill=white] {$\xtext$};
	\foreach \y/\ytext in {0.5/A, 1, 1.25/\f{1}{p_0}} 
	\draw (1pt,\y cm) -- (-1pt,\y cm) node[anchor=east,fill=white] {$\ytext$};
	\draw (0,1) -- (1,1);
	\draw (1,0) -- (1,1);
	\draw (1.25,0) -- (1.25,1.25) node {\hskip 0.4cm $B$};
	\draw (0,1.25) -- (1.25,1.25);
	\draw (0,0)  -- (1.25,1.25);
	\draw (0,0.5) -- (1.25,0.5) node {\hskip 0.4cm $C$};
	\draw (0,0) --(1.25,0.5);
	\draw (0,0.5) --(1.25,1.25);
	\node [below=1cm, align=flush center]
	{
		\hskip 3cm \text{Figure 1}
	};
	\end{tikzpicture}}	
%
%\end{figure}

Fix $\frac{2n}{2s+n}<p_0<1$ so that $s>n\Big(\f{1}{p_0}-\f{1}{2}\Big)$. Then by (i) of Theorem \ref{thm-spectralmultipliers}, for $s>\f{n}{2}$, $F(\sL)$ is bounded on $\F_{p,2}^{\alpha,L}(X)$ for all $\alpha\in \mathbb{R}$ and $p_0\le p<\vc$ which is corresponding to $\alpha\in \mathbb{R}$ and $(p,q)$ on the interval $AC$ excluding two endpoints. See Figure 1.
 Hence, by the real interpolation result in Theorem \ref{mainthm-Interpolation}, $F(\sL)$ is bounded on $\B_{p,p}^{\alpha,L}(X)\equiv \F_{p,p}^{\alpha,L}(X)$ for all $\alpha\in \mathbb{R}$ and $p_0\le p<\vc$ which is corresponding to $\alpha\in \mathbb{R}$ and $(p,q)$ on the interval $OB$ excluding two endpoints. By using the complex interpolation in Proposition \ref{prop-comple interpolation}, we imply that $F(\sL)$ is bounded on $\F_{p,q}^{\alpha,L}(X)$ for all $\alpha\in \mathbb{R}$ and $(p,q)$ being in the domain bounded by the quadrilateral $OABC$ excluding the intervals $OA$, $AB$ and $OC$. However, if we choose $\frac{2n}{2s+n}<\tilde p<p_0$, repeating the argument above, we can see easily that  $F(\sL)$ is bounded on $\F_{p,q}^{\alpha,L}(X)$ for all $\alpha\in \mathbb{R}$ and $(p,q)$ being in the domain bounded by the quadrilateral $OABC$ excluding the intervals $OA$.

%\begin{figure}
%	\centering
\centerline{\begin{tikzpicture}[scale=4]
	%\draw[step=.5cm, gray, very thin] (-1.2,-1.2) grid (1.2,1.2); 
	%\filldraw[fill=green!20,draw=green!50!black] (0,0) -- (3mm,0mm) arc (0:30:3mm) -- cycle; 
	\fill[fill=gray!10]
	(0,0) --(0,0.5)--(1.25,1.25)--(1.25,0.5)--(0,0);
	\draw[->] (0,0) -- (1.6,0) node[anchor=north west] (x axis) {$1/p$};
	\draw[->] (0,0) -- (0,1.6) node[anchor=south east] (y axis) {$1/q$};
	\foreach \x/\xtext in {0/O, 0.5/\f{1}{2}, 1,1.25/\f{1}{p_0}} 
	\draw (\x cm,1pt) -- (\x cm,-1pt) node[anchor=north,fill=white] {$\xtext$};
	\foreach \y/\ytext in {0.5/A, 1, 1.25/\f{1}{p_0}} 
	\draw (1pt,\y cm) -- (-1pt,\y cm) node[anchor=east,fill=white] {$\ytext$};
	\draw (0,1) -- (1,1);
	\draw (1,0) -- (1,1);
	\draw (1.25,0) -- (1.25,1.25) node {\hskip 0.4cm $B$};
	\draw (0,1.25) -- (1.25,1.25);
	\draw (0,0) --(1.25,0.5)  node {\hskip 0.4cm $C$};
	\draw (0,0.5) --(1.25,1.25);
	\draw (0,0) -- (0.8333333333,1) node[below]{\hskip0.3cm $D_0$};
	\draw[->]  (0.8333333333,1) -- (1.208333333285,1.45);
	\draw[->]  (0,0.5) -- (1.041666666,1.25) node[above]{$E_0$};
	\draw[dotted] (0,0) -- (0.6944444444,1) node[above]{$D_1$};
	\draw[dotted,->] (0.6944444444,1) -- (1.0069444444,1.45);
	\draw[dotted] (0,0.5) -- (0.8680555555,1.25) node[above]{{\small $E_1$}};
	\draw (0,1) -- (0.55,1) node[above]{$D_2$};
	\node at (0.6944444444,1) {\textbullet};
	\node at (0.8333333333,1) {\textbullet};
	\node at (0.575,1) {\textbullet};
	\node [below=1cm, align=flush center]
	{
		\hskip 5cm \text{Figure 2}
	};
	\end{tikzpicture}	}
%\caption{}
%\end{figure}

We next prove that for each $(p,q)$ on the ray $\overrightarrow{OD_0}$, $F(\sL)$ is bounded on $\F_{p,q}^{\alpha,L}(X)$ for all $\alpha\in \mathbb{R}$ if $s>n(\f{1}{p\wedge q}-\f{1}{2})=n(\f{1}{q}-\f{1}{2})$. See Figure 2. Clearly, it suffices to verify this assertion for $(p,q)\in \overrightarrow{OD_0}$ with $0<p<1$. Indeed, for any $\theta\in (0,1)$ and $(p_1,q_1), (p_2,q_2)\in \overrightarrow{OD_0}$ so that $q_1>1$, $0<q_2<1$ and
\[
\f{1}{p}=\f{ \theta}{p_1} +\f{1-\theta}{p_2}, \ \ \ \ \f{1}{q}=\f{\theta}{q_1} +\f{1-\theta}{q_2},
\]
we have $p_1>q_1$ and $p_2>q_2$. Since $(p_1,q_1)$ belongs to the interior of the quadrilateral $OABC$, we have
\begin{equation}\label{eq1-interpolation proof main result}
\|F(\sqrt L)\|_{\F^{\alpha, L}_{p_1,q_1}(X)\to \F^{\alpha, L}_{p_1,q_1}(X)}\lesi  \sup_{t>0}\|\eta\, \delta_tF\|_{W^{\tilde q}_{\f{n}{2}+\epsilon}}
\end{equation}
where $\epsilon>0$ will be fixed later.

Moreover, by (ii) of Theorem \ref{thm-spectralmultipliers},
\begin{equation}\label{eq2-interpolation proof main result}
\|F(\sqrt L)\|_{\F^{\alpha, L}_{p_2,q_2}(X)\to \F^{\alpha, L}_{p_2,q_2}(X)}\lesi  \sup_{t>0}\|\eta\, \delta_tF\|_{W^{\tilde q}_{\f{n}{q_2}+\f{\tilde n}{2}+\epsilon}}
\end{equation}
The estimates \eqref{eq1-interpolation proof main result} and \eqref{eq2-interpolation proof main result}, along with the complex interpolation result in Proposition \ref{prop-comple interpolation}, imply that
 \begin{equation}\label{eq3-interpolation proof main result}
 \begin{aligned}
  \|F(\sqrt L)\|_{\F^{\alpha, L}_{p,q}(X)\to \F^{\alpha, L}_{p,q}(X)}&\sim\|F(\sqrt L)\|_{\left(\F^{\alpha, L}_{p_1,q_1}(X),\F^{\alpha, L}_{p_2,q_2}(X)\right)_\theta\to \left(\F^{\alpha, L}_{p_1,q_1}(X),\F^{\alpha, L}_{p_2,q_2}(X)\right)_\theta}\\
  &\lesi  \sup_{t>0}\|\eta\, \delta_tF\|_{\Big(W^{\tilde q}_{\f{n}{2}+\epsilon},W^{\tilde q}_{\f{n}{q_2}+\f{\tilde n}{2}+\epsilon}\Big)_\theta}\\
  &\sim  \sup_{t>0}\|\eta\, \delta_tF\|_{\Big(W^{\tilde q}_{\theta(\f{n}{2}+\epsilon)+(1-\theta)(\f{n}{q_2}+\f{\tilde n}{2}+\epsilon)}\Big)}.
 \end{aligned}
 \end{equation}
Note that 
\[
\begin{aligned}
\theta\Big(\f{n}{2}+\epsilon\Big)+(1-\theta)\Big(\f{n}{q_2}+\f{\tilde n}{2}+\epsilon\Big)&=\f{\theta n}{2}+\f{n(1-\theta)}{q_2}+\f{(1-\theta)\tilde n}{2} +\epsilon\\
&=\f{\theta n}{2}+\f{n}{q}-\f{(\theta n}{q_1}+\f{(1-\theta)\tilde n}{2} +\epsilon\\
&=n\Big(\f{1}{q}-\f{1}{2}\Big)-\f{\theta n}{q_1}+\f{(1+\theta)n}{2}+\f{(1-\theta)\tilde n}{2} +\epsilon
\end{aligned}
\]
Hence, for any $\tilde \epsilon>\epsilon$, we are able to choose $\theta \uparrow 1$ and $q_1\downarrow 1$  so that 
\[
n\Big(\f{1}{q}-\f{1}{2}\Big)-\f{\theta n}{q_1}+\f{(1+\theta)n}{2}+\f{(1-\theta)\tilde n}{2} +\epsilon<n\Big(\f{1}{q}-\f{1}{2}\Big) +\tilde \epsilon.
\]
This, in combination with \eqref{eq3-interpolation proof main result}, implies that
\begin{equation}\label{OD}
\|F(\sqrt L)\|_{\F^{\alpha, L}_{p,q}(X)\to \F^{\alpha, L}_{p,q}(X)}\lesi  \sup_{t>0}\|\eta\, \delta_tF\|_{W^{\tilde q}_{s}}
\end{equation}
as long as $s>n(\f{1}{q}-\f{1}{2})$, $(p,q)\in \overrightarrow{OD_0}$ and $\alpha\in \mathbb{R}$.

Therefore, using the complex interpolation theorem again, we obtain that \eqref{OD} holds true for all  $(p,q)$ being on the ray $\overrightarrow{OD_0}$ and in interior of the convex hull of the point $A$ and the ray $\overrightarrow{OD_0}$. As a consequence, \eqref{OD} holds true for all  $(p,q)\in \overrightarrow{OD_0} \cup AE_0\backslash\{A\}$. Since $\frac{2n}{2s+n}<p_0<1$, it follows that 
\begin{equation}\label{ODs}
\|F(\sqrt L)\|_{\F^{\alpha, L}_{p,q}(X)\to \F^{\alpha, L}_{p,q}(X)}\lesi  \sup_{t>0}\|\eta\, \delta_tF\|_{W^{\tilde q}_{s}}
\end{equation}
as long as $s>\f{n}{2}$, $(p,q)\in OD_0\cup AE_0\backslash\{A,O\}$ and $\alpha\in \mathbb{R}$.

Repeating this argument, we can find a sequence of points $\{D_k\}_{k=0}^\vc$ so that \eqref{ODs} holds true whenever $(p,q)\equiv D_k, k=1,1,2,\ldots$. Moreover, by a straightforward calculations we can show that $D_k \to (0,1)$ as $k\to \vc$. Using the complex interpolation result in Proposition \ref{prop-comple interpolation} for the interval $OB\backslash O$ and the sequence $\{D_k\}_{k=0}^\vc$, we derive that 
\begin{equation*}
\|F(\sqrt L)\|_{\F^{\alpha, L}_{p,q}(X)\to \F^{\alpha, L}_{p,q}(X)}\lesi  \sup_{t>0}\|\eta\, \delta_tF\|_{W^{\tilde q}_{s}}
\end{equation*}
as long as $s>\f{n}{2}$, $p\ge q>1$ and $\alpha\in \mathbb{R}$.

Applying the duality in Proposition \ref{prop-duality},
 \begin{equation}\label{all p q}
 \|F(\sqrt L)\|_{\F^{\alpha, L}_{p,q}(X)\to \F^{\alpha, L}_{p,q}(X)}\lesi  \sup_{t>0}\|\eta\, \delta_tF\|_{W^{\tilde q}_{s}}
 \end{equation}
 as long as $s>\f{n}{2}$, $1<p,q<\vc$ and $\alpha\in \mathbb{R}$.
 
 At this stage, with \eqref{all p q} on hands, using the similar argument  to  the proof of \eqref{OD}, we obtain that
  \begin{equation}\label{all p q}
  \|F(\sqrt L)\|_{\F^{\alpha, L}_{p,q}(X)\to \F^{\alpha, L}_{p,q}(X)}\lesi  \sup_{t>0}\|\eta\, \delta_tF\|_{W^{\tilde q}_{s}}
  \end{equation}
  as long as $s>n(\f{1}{1\wedge p\wedge q}-\f{1}{2})$, $0<p,q<\vc$ and $\alpha\in \mathbb{R}$.
  
  This completes the proof of (i).
  
  \medskip
  
  \noindent (ii) The proof of the item (ii) follows directly from (i) and the real interpolation result in Theorem \ref{mainthm-Interpolation}.
  
  This completes our proof.
\end{proof}

%\begin{proof}
%	[Proof of Theorem \ref{mainthm-spectralmultipliers-Miklin}:] Note that \eqref{eq1-mainthmsm} always true with $\tilde =\vc$. See for example \cite{DOS}. So, Theorem \ref{mainthm-spectralmultipliers-Miklin} is a direct consequence of Theorem \ref{mainthm-spectralmultipliers}.
%\end{proof}
{\bf Acknowledgement.}  Xuan Thinh Duong was supported by Australian Research Council through the ARC grant DP160100153.

\end{document}